\theoremstyle{plain}
\newtheorem{theorem}{Theorem}[section]
\newtheorem{example}{Example}
\newtheorem*{dualityThm}{Theorem \ref{dualityIsomorphism}}
\newtheorem*{ringProp}{Proposition \ref{endRings}}
\newtheorem{proposition}[theorem]{Proposition}
\newtheorem{corollary}[theorem]{Corollary}
\newtheorem{lemma}[theorem]{Lemma}
\theoremstyle{remark}
\newtheorem*{remark}{Remark}
\theoremstyle{definition}
\newtheorem{definition}[theorem]{Definition}
\newtheorem*{notation}{Notation}
\numberwithin{equation}{section}
\renewcommand{\setminus}{\smallsetminus}
\newcommand{\inv}{^{-1}}
\newcommand{\R}{\mathbb{R}}
\newcommand{\Z}{\mathbb{Z}}
\newcommand{\N}{\mathbb{N}}
\newcommand{\C}{\mathbb{C}}
\newcommand{\e}{\varepsilon}
\renewcommand{\d}{\delta}
\renewcommand{\a}{\alpha}\renewcommand{\b}{\beta}
\renewcommand{\emptyset}{\varnothing}
\newcommand{\End}{\operatorname{End}}
\newcommand{\Id}{\mathbbm{1}}
\newcommand{\Hom}{\operatorname{Hom}}
\renewcommand{\matrix}[1]{\begin{bmatrix}#1\end{bmatrix}}
\renewcommand{\deg}{\operatorname{deg}}
\newcommand{\cotimes}{\:\bar\otimes\:}
\newcommand{\BN}[2]{{\mathcal{T}\hskip-2pt\mathcal{L}^{#1}_{#2}}}
\newcommand{\bn}[1]{\mathcal{T}\hskip-2pt\mathcal{L}_{#1}}
\newcommand{\Ch}{\operatorname{Ch}}
\newcommand{\Tot}{\operatorname{Tot}}
\newcommand{\Cob}{\operatorname{Cob}}
\newcommand{\TL}{\operatorname{TL}}
\newcommand{\AplusB}{\begin{tabular}{l l l}$A$\\ $\oplus$ \\ $B$\end{tabular}}
\newcommand{\Cone}{\operatorname{Cone}}
\newcommand{\ket}[1]{|{#1}\rangle}
\newcommand{\bra}[1]{\langle{#1}|}
\newcommand{\Tr}{\operatorname{Tr}}
\renewcommand{\sl}{\mathfrak{sl}}
\renewcommand{\L}[1]{L_{#1}}
\renewcommand{\R}[1]{R_{#1}}
\newcommand{\pic}[1]{
\begin{minipage}{.2in}
\begin{center}\includegraphics[scale=.3]{#1}\end{center}
\end{minipage}
}
\newcommand{\mpic}[1]{
 \begin{minipage}{.14in}
\begin{center}\includegraphics[scale=.22]{#1}\end{center}
\end{minipage}
}
\newcommand{\bpic}[1]{
\begin{minipage}{.2in}
\begin{center}\includegraphics[scale=.4]{#1}\end{center}
\end{minipage}
}
\newcommand{\bmpic}[1]{\pic{#1}}
\newcommand{\turnback}{\pic{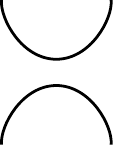}}
\newcommand{\straightthrough}{\pic{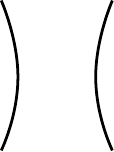}}
\newcommand{\topdot}{\mpic{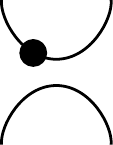}}
\newcommand{\bottomdot}{\mpic{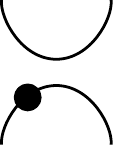}}
\newcommand{\differenceOfDots}{\topdot-\bottomdot\ }
\newcommand{\sumOfDots}{\topdot+\bottomdot\ }
\newcommand{\isaddle}{\mpic{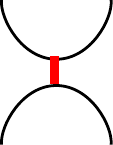}}
\newcommand{\bturnback}{\bpic{2strands_turnback.pdf}}
\newcommand{\bstraightthrough}{\bpic{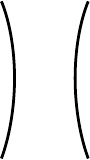}}
\newcommand{\btopdot}{\bmpic{topdot.pdf}}
\newcommand{\bbottomdot}{\bmpic{bottomdot.pdf}}
\newcommand{\bdifferenceOfDots}{\btopdot-\bbottomdot\ }
\newcommand{\bsumOfDots}{\btopdot+\bbottomdot\ }
\newcommand{\bisaddle}{\bmpic{isaddle.pdf}}
\begin{document}

\title{Morphisms between categorified spin networks}
\author{Matt Hogancamp}

\begin{abstract}
We introduce a graphical calculus for computing morphism spaces between the categorified spin networks of Cooper and Krushkal.  The calculus, phrased in terms of planar compositions of categorified Jones-Wenzl projectors and their duals, is then used to study the module structure of spin networks over the colored unknots.
\end{abstract}

\maketitle


\section{Introduction}
In the late 1990's Khovanov introduced \cite{Kh00} a homology theory for knots which categorifies the Jones polynomial.  Even though Khovanov's invariant is strictly stronger than the Jones polynomial, perhaps the single most important feature of Khovanov homology is the notion of morphisms between knot invariants.  Indeed, this new feature---more precisely functoriality up to sign under oriented link cobordisms---was exploited by Jacob Rasmussen in \cite{Ra10} to give a combinatorial proof of the Milnor conjecture, which had only previously been proven using gauge theory.  That the categorified knot invariant should detect subtle 4-dimensional information is exciting and points to the very reason that the idea of categorification was introduced to low-dimensional topology in \cite{CF94}.  Motivated by the desire to understand the additional structure present upon categorification, we are here concerned with computing spaces of morphisms between categorified spin networks.  On the pre-categorified level, spin networks are certain combinatorial objects which appear naturally in the study of the colored Jones polynomials---i.e.~$\sl_2$ quantum invariants, of which the Jones polynomial is a special case---and in the Turaev-Viro construction of quantum 3-manifold invariants \cite{KL94}.  

Specifically, a spin network $N$ is a trivalent graph with boundary, embedded in the disk, with edges labelled by nonnegative integers satisfying some admissibility conditions.  The evaluation of $N$ is a certain $\C(q)$-linear combination of tangles $T\subset D^2$ obtained by replacing an edge labelled by $n$ with a particular idempotent element $p_n\in \TL_n$ (called a Jones-Wenzl projector) of the Temperley-Lieb algebra and connecting up in the disk in a standard way:
\[
\text{A spin network: \ }\bpic{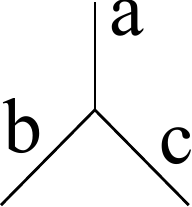} \hskip .6 in \text{Its evaluation: } \bpic{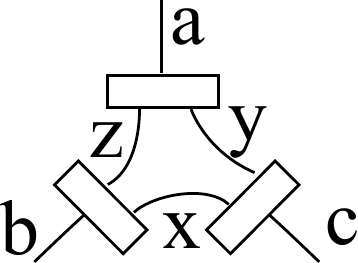}\ \ \ \ \ .
\]
Bar-Natan \cite{B-N05} defines graded cobordism categories $\bn{n}$ which categorify the algebras $\TL_n$, and Cooper-Krushkal \cite{CK12a} define chain complexes $P_n$ over $\bn{n}$ which categorify the Jones-Wenzl projectors.  The definition of (the evaluation of) spin networks extends immediately to the categorification.

We remark that the Temperley-Lieb algebras have many categorifications \cite{BFK99, El10, St05, Kh02, B-N05}, as do the Jones-Wenzl projectors \cite{FSS12, CK12a, Roz10a}.  By the uniqueness arguments in \cite{CK12a} and the conjectural relationship \cite{SS12} between the Lie-theoretic \cite{FSS12} and topological \cite{CK12a, Roz10a} categorifications of the Jones-Wenzl projectors, our choice to follow the Cooper-Krushkal construction is not very restrictive.  Also, we could work over categories of modules over Khovanov's rings $H^n$ \cite{Kh02} with only a slight change in notation.


The first step in computing spaces of morphisms between chain complexes over $\bn{n}$ is provided by the following theorem:
\begin{dualityThm}
There is a natural isomorphism
\begin{equation}\label{dualityEq}
\Hom^\bullet_{\bn{n}}(A,B)\cong q^{n}\Hom^\bullet_{\bn{0}^{\Pi}}\Big(\emptyset, \pic{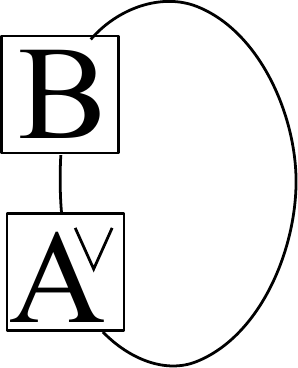}\ \ \  \Big).
\end{equation}
\end{dualityThm}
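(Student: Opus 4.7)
The plan is to prove the duality isomorphism by the standard \emph{bending} procedure available in any pivotal category. Each object of $\bn{n}$---namely a configuration of $n$ marked boundary points on a disk---is self-dual in Bar-Natan's category, with duality data given by cup and cap cobordisms satisfying the zigzag identities. Consequently, a single cobordism $f \colon a \to b$ in $\bn{n}$ can be converted into a cobordism from the empty $1$-manifold to the closed-up tangle on the right of (\ref{dualityEq}) by composing with $n$ caps along the top of $b$ and $n$ cups along the bottom of $a$, stitching the outer strands of $b$ to those of the reflection of $a$. This bending operation is tautologically invertible (its inverse is un-bending), so at the level of single cobordisms it is manifestly a bijection.

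First I would establish the isomorphism at the level of ordinary Hom spaces in $\bn{n}$, before passing to chain complexes. The space $\Hom_{\bn{n}}(a,b)$ between two Temperley-Lieb diagrams is the free abelian group on dotted cobordisms $a \to b$ modulo Bar-Natan's local relations ($4$Tu, $S$, $T$). Since these relations are local, they are preserved under surrounding isotopy, so bending descends to a bijection on morphism spaces. It remains to compute the degree shift: in Bar-Natan's grading each cup carries quantum degree $+1$, and $n$ such cups are used on (say) the bottom closure, while the $n$ cap-shaped closures on the top are absorbed into the shape of the trace tangle and contribute no extra shift. Hence bending sends a morphism of quantum degree $k$ to one of quantum degree $k + n$, which is exactly the stated $q^n$ shift.

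Next I would promote this bijection to an isomorphism of Hom complexes. For chain complexes $A, B$ over $\bn{n}$, the complex $\Hom^\bullet_{\bn{n}}(A,B)$ has degree-$k$ part $\bigoplus_{j-i=k} \Hom_{\bn{n}}(A_i, B_j)$ with differential $d(f) = d_B \circ f - (-1)^{|f|} f \circ d_A$. Because bending is natural with respect to composition with any fixed cobordism (pre- or post-composition simply becomes attachment of another layer to the closed-up picture), it intertwines $d$ with the differential induced on the closed-up trace of $B$ stacked above the dualized complex $A^\vee$. The $q^n$ shift is uniform across homological degrees, so the bending map is an isomorphism of bigraded chain complexes, natural in $A$ and $B$ by the naturality of the cup and cap cobordisms.

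The main obstacle is bookkeeping rather than conceptual novelty. I would need to pin down sign conventions so that the dualization $A \mapsto A^\vee$ produces the correct signs in the induced differential (and hence that bending is an actual chain map and not merely a map commuting with differentials up to sign); verify that the closed-up trace of a (possibly unbounded) chain complex lies in the product completion $\bn{0}^\Pi$ rather than the uncompleted category $\bn{0}$, since the trace of an unbounded complex may itself be unbounded; and confirm that the quantum grading shifts assemble coherently across all homological degrees. Once these conventions are fixed the isomorphism is essentially tautological, reducing to the topological observation that bending a Bar-Natan cobordism is an invertible, local, degree-shifting operation.
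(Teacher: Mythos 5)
Your proposal follows essentially the same route as the paper: establish the bending bijection first at the level of ordinary morphism spaces between Temperley--Lieb diagrams (the paper's Proposition \ref{ordinaryDualityIso}, proved via explicit cup/saddle maps $\eta_a$, $s_a$ and zigzag identities in Proposition \ref{unitAndSaddleProp}), then promote to chain complexes using naturality and a careful check that $L_g$ and $R_f$ correspond under the bijection to the maps $\ket{g\cotimes\Id}$ and $\ket{\Id\cotimes f^\vee}$, with the product completion $\bn{0}^\Pi$ appearing exactly where you say it should. The one place where your sketch is loose is the degree bookkeeping (``each cup carries quantum degree $+1$'' is not quite Bar-Natan's $\deg S = -\chi(S)+\tfrac{m+n}{2}$ convention, under which the closing map $\eta_a$ has degree $-n$ and the shift then emerges from the formal $q^n$ on the target), but this is a convention issue rather than a gap in the argument.
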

Here, $\Hom^\bullet$ denotes the chain complex spanned by homogeneous maps and differential given by the supercommutator $[d, -]$, and $(\ )^\vee$ is the contravariant duality functor which reflects diagrams in the plane and reverses all degrees.  Isomorphisms of this kind are common, in particular appearing in categories of $\sl_n$ foams \cite{MN08, Kh04, MSV09} and matrix factorizations \cite{KR08}.  The new feature here is the consideration of potentially unbounded chain complexes, hence the necessity to embed $\bn{n}$ into a category $\bn{n}^\Pi$ which contains countable direct products (see section \ref{completionSection}).

Diagrammatically, we let $\pic{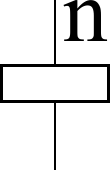}:= P_n$ denote the categorified Jones-Wenzl projector which is supported in non-positive homological degrees and $\pic{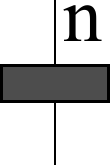}:=P_n^\vee$.  If $M$ and $N$ are planar compositions of projectors $P_n$ (for example, if $M$ and $N$ are categorified spin networks), Theorem $\ref{dualityIsomorphism}$ implies that $\Hom^\bullet(M,N)$ can be computed as follows:
\begin{enumerate}
\item Reflect $M$ and replace all the white boxes with black boxes to obtain $M^\vee$.
\item Glue up all of the loose ends of $N$ with the corresponding loose ends of $M^\vee$.
\item Take $\Hom^\bullet(\emptyset, -)$ of the result.  For example
\[
\Hom^\bullet\Big(\ \bpic{3vertex.pdf}\ \ \ \ \ \ \  \ \ , \bpic{3vertex.pdf}\ \ \ \ \ \ \ \ \Big) \cong q^{(a+b+c)/2}\Hom^\bullet\Big(\emptyset, \bpic{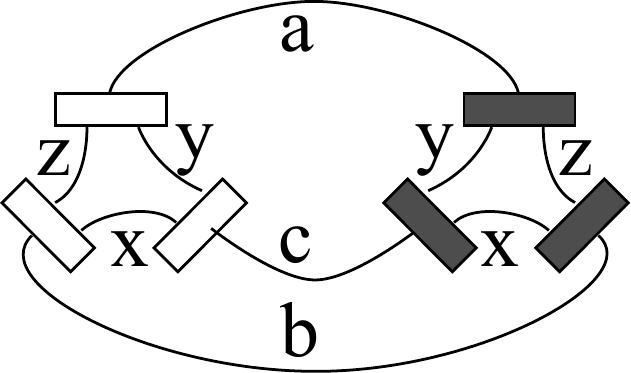}\ \ \ \ \ \ \ \ \ \  \ \ \ \ \ \Big).
\]
\end{enumerate}
We then prove that the resulting planar composition of $\pic{nProjector.pdf}$'s and $\pic{nProjectorDual.pdf}$'s can be simplified using the following relations
\begin{itemize}
\item[(4)] Diagrams which are isotopic rel boundary give canonically homotopy equivalent chain complexes (corollary \ref{isotopyCorollary}).
\item[(5)] Absorption rule: $\pic{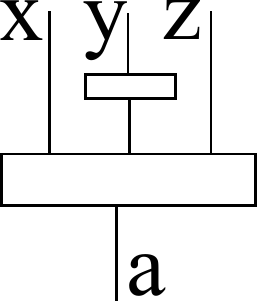}\ \ \ \simeq \bpic{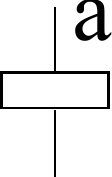} \simeq \pic{littleBlack-bigWhite}\ \ \ \ $ (proposition \ref{absorptionRule}).
\item[(6)] Commuting rule: $\pic{nProjector-A}\simeq\pic{A-nProjector}$ for every chain complex $A$ over $\bn{n}^\Pi$ (proposition \ref{commutingRule}).
\item[(7)] Semi-orthogonality rule: if $i<j$ then $\pic{jProjector-A-iProjectorDual}\simeq 0 $ for every chain complex $A$ over $\BN{i}{j}^\Pi$ (proposition \ref{semiRule}).
\end{itemize}
The apparent asymmetry in the above rules lies in the fact that here we are implicitly defining, for example, $\pic{nProjector_white-black}$ to be the total complex $\Tot^\Pi(A^{\bullet\bullet})$ of a bicomplex in which we take direct \emph{product} along the diagonals; there are dual statements in terms of direct \emph{sum}.  This graphical calculus enables us to simplify many $\Hom^\bullet$ spaces of interest.  In particular, we immediately obtain
\begin{ringProp}
Let $\ket{\ \ }$ denote $\Hom^\bullet_{\bn{0}}(\emptyset, \ \ )$.  We have
\begin{enumerate}
\item $\End^\bullet(\pic{aProjector} ) \simeq q^a \ket{\pic{aProjector_trace}\ \  }$.
\item $\End^\bullet(\pic{3network}\ ) \simeq q^{(a+b+c)/2}\ket{\ \pic{thetaNet}\ \ }$.
\item $\End^\bullet(\ \pic{iNet_i}\ \ )\simeq q^{(a+b+c+d)/2} \ket{ \ \pic{pillNet}\ \ \ \ }$.
\end{enumerate}
\end{ringProp}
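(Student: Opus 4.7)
All three parts follow the same two-step template. First, apply Theorem~\ref{dualityIsomorphism} with $A=B=M$, where $M$ is the spin network in question, to obtain
\[
\End^\bullet(M) \;\cong\; q^{n}\, \Hom^\bullet_{\bn{0}^\Pi}\bigl(\emptyset,\, M \cup_\partial M^\vee \bigr),
\]
where $n$ is the parameter of the Temperley--Lieb category containing $M$ (so $n=a$ for (1), $n=(a+b+c)/2$ for (2), and $n=(a+b+c+d)/2$ for (3)), and $M\cup_\partial M^\vee$ denotes the closed planar diagram obtained by reflecting $M$, recoloring every white projector black, and gluing each resulting boundary strand to the corresponding strand of $M$ as prescribed by the theorem. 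Second, apply the absorption rule (5), together with the planar isotopy rule (4), to collapse each resulting adjacent white--black projector pair into a single white projector. What remains is precisely the closed network appearing on the right-hand side of the proposition.

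\textbf{Execution.} For (1), with $M=P_a$ the glued diagram is the trace closure of $P_a$ stacked atop $P_a^\vee$, and a single application of the absorption rule $\pic{littleBlack-bigWhite}\simeq\bpic{aProjector.pdf}$ reduces it to $q^a\ket{\pic{aProjector_trace}}$. For (2), starting from the trivalent network $N$ with external projectors $P_a,P_b,P_c$, Theorem~\ref{dualityIsomorphism} produces the prefactor $q^{(a+b+c)/2}$ and a closed diagram with two trivalent vertices joined by three arcs of widths $a,b,c$, each arc carrying a stacked $P_i$--$P_i^\vee$ pair. After an ambient isotopy positions each pair in the configuration of the absorption rule, three applications of (5) eliminate the three black projectors and leave $q^{(a+b+c)/2}\ket{\pic{thetaNet}}$. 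Case (3) is identical in form: the closure $N\cup_\partial N^\vee$ is pill-shaped with four stacked $P_i$--$P_i^\vee$ pairs (one on each arc joining the two I-vertices), and four applications of absorption yield $q^{(a+b+c+d)/2}\ket{\pic{pillNet}}$.

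\textbf{Main obstacle.} The substantive content lies entirely in step two. The absorption rule is written asymmetrically between white and black projectors---a reflection of the fact that $\bn{n}^\Pi$ uses direct products (not direct sums) along the diagonals of the defining bicomplex---so one must verify that each stacked $P_i$--$P_i^\vee$ pair produced by gluing really appears in the arrangement $\pic{littleBlack-bigWhite}$ called for by the rule, rather than in the reflected arrangement where the rule need not apply. Moreover, since the two projectors in each pair have equal width, one is in the degenerate equal-size case of the rule, which amounts to idempotence of the categorified Jones--Wenzl projector in the appropriate product-completion sense; while this is part of the standard Cooper--Krushkal theory, some care is needed to track how $M^\vee$ is attached to $M$ and which ``side'' is the completed one. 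Once the correct configuration is confirmed in a single representative pair, the remaining reductions for (1), (2), and (3) are formal consequences of the graphical calculus (4)--(7).
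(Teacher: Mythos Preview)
Your argument for parts (1) and (2) is correct and matches the paper's: apply Theorem~\ref{dualityIsomorphism}, then use the absorption rule to collapse each adjacent white--black pair. For part (3), however, there is a genuine gap.

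The I-network $N$ carries a projector $P_i$ on its \emph{internal} edge, in addition to $P_a,P_b,P_c,P_d$ on the four external legs. Hence $N^\vee$ carries $P_i^\vee$ on its internal edge, and the closure $N\cup_\partial N^\vee$ is a closed graph with four trivalent vertices (two from $N$, two from $N^\vee$) and six edges---not the two vertices and four arcs you describe. The four external arcs do each carry an adjacent pair $P_x,P_x^\vee$ and are handled by absorption exactly as you say, but $P_i$ and $P_i^\vee$ sit on \emph{different} edges of the closed diagram and are never adjacent; no direct application of the absorption rule removes the surviving black box. The paper handles this with the commuting rule (Proposition~\ref{commutingRule}): after the four external absorptions, an isotopy exhibits the closed picture as the $i$-strand trace of a vertical composition in $\Ch(\bn{i}^\Pi)$ in which $P_i$ and $P_i^\vee$ are separated by some piece $A$; commuting $P_i$ past $A$ brings it adjacent to $P_i^\vee$, and one last absorption finishes. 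Your ``main obstacle'' paragraph focuses on the orientation of the absorption rule, which is a legitimate bookkeeping concern, but the step actually missing from your argument is this commuting move.
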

It follows in particular that the colored unknots give algebras which act on the edges of a categorified spin network.  In section \ref{sheetAlgSection} we show that these algebras are commutative, and the actions of these algebras on spin networks coincide with the topological actions, defined in terms of saddle cobordisms.

In a follow-up paper \cite{H12} we will use the $\Hom^\bullet$-space calculations to study functoriality properties of the categorification of spin networks and the associated link invariants.  Also, the calculus is used in the work \cite{CH12} of the author and Benjamin Cooper in which we categorify all of the minimal idempotents in $\TL_n$.

\textbf{Organization of the paper.}  In section \ref{precategorificationSection} we review the pre-categorified story, i.e.~Temperley-Lieb algebras and spin networks.  Section \ref{categorificationSection} recalls Bar-Natan's tangle categories and introduces their completions with respect to countable direct sums or products.  Section \ref{dualitySection} is dedicated to the statement and proof of the isomorphism in equation \ref{dualityEq}.  In section \ref{calculusSection} we develop the calculus which is used to simplify compositions of projectors and their duals, and section \ref{sheetAlgSection} contains the application to the action of colored unknots on spin networks.  Finally, the appendix (section \ref{homotopySection}) contains the necessary results in elementary homotopy theory, particularly on the contractibility of bicomplexes with contractible columns.

\textbf{Acknowledgements.}  The author would like to thank his advisor, Slava Krushkal, and Ben Cooper for many useful discussions and a lot of patience, particularly during early attempts to compute the homology of the sheet algebra $\End^\bullet(\pic{2projector})$.

\section{Pre-Categorification}\label{precategorificationSection}

\subsection{Temperley-Lieb algebras}
Let $\TL^m_n$ be the $\C(q)$-vector space generated by properly embedded 1-submanifolds of the rectangle $[0,1]^2$ with boundary equal to a standard set of $m$ points $\{i/(m+1)\:|\: 1\leq i\leq m\}$ on the ``top'' of the rectangle and $n$ points $\{i/(n+1)\:|\: 1\leq i\leq n\}$ on the ``bottom'' of the rectangle.  We regard diagrams modulo planar isotopy and the relation $D\sqcup U = (q+q\inv)D$, where $U$ is a circle component.

We have a pairing $\TL^m_k\otimes \TL^k_n\rightarrow \TL^m_n$ given by vertical stacking, which we denote by $a\cdot b$, or simply $ab$.  The pairing makes $\TL_n:=\TL^n_n$ into an algebra, called the \emph{Temperley-Lieb algebra} on $n$ strands, and $\TL^m_n$ into a $(\TL_m,\TL_n)$-bimodule.  For a diagram $a\in \TL^m_n$, define the \emph{through degree} $\tau(a)$ to be the minimal $k$ such that $a=b\cdot c$ with $b\in\TL^m_k$, $c\in\TL^k_n$.  For a linear combinations $b=\sum_a f_a a$ of diagrams, let $\tau(b):=\max\{\tau(a)\:|\: f_a\neq 0\}$

The Temperley-Lieb algebras are very closely linked with the representation theory of quantum $\sl_2$; generally $\TL^m_n$ is the vector space in which the Jones invariant (i.e.\ $\sl_2$ quantum invariant) of tangles lives.

\subsection{The Jones-Wenzl projectors}
The connection of the algebra $\TL_n$ to representation theory of quantum $\sl_2$ comes from the fact that
\[
\TL_n\cong \End_{U_q(\sl_2)}(V^{\otimes n})
\]
where $U_q(\sl_2)$ is a $q$-deformed version of the enveloping algebra of $\sl_2$, and $V$ is the $q$-deformed version of the standard 2-dimensional representation.  The idempotent $p_n\in \TL_n$ corresponding to projection onto the $n$-th symmetric power is called the \emph{Jones-Wenzl projector}, and is essential in defining the $\sl_2$ quantum invariants for links and 3-manifolds.  This idempotent can be defined axiomatically as in the following theorem.

\begin{theorem}\label{JWproj}
There are elements $p_n\in \TL_n$ uniquely characterized by (1) $a\cdot p_n = p_n\cdot b = 0$ whenever $\tau(a),\tau(b)<n$ and (2) $\tau(p_n- 1_n)<n$.
\end{theorem}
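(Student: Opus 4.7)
The plan is to separate uniqueness (which is essentially formal) from existence (which requires a genuine construction).

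For uniqueness, I would argue as follows. Suppose $p_n$ and $p_n'$ both satisfy (1) and (2). Property (2) lets me write $p_n = 1_n + y$ and $p_n' = 1_n + y'$ with $\tau(y), \tau(y') < n$. Applying property (1) of $p_n$ to $y$ (which is a $\C(q)$-linear combination of diagrams of through degree strictly less than $n$) yields $p_n \cdot y = 0$, and hence
\[
p_n \cdot p_n' = p_n \cdot (1_n + y') = p_n + p_n\cdot y' = p_n.
\]
Symmetrically, using $p_n = 1_n + y$ and the annihilation property of $p_n'$, one gets $p_n \cdot p_n' = p_n'$. Comparing the two expressions forces $p_n = p_n'$.

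For existence I would use Wenzl's recursion. Set $p_1 = 1_1$ and inductively define
\[
p_{n+1} = (p_n \otimes 1_1) - \frac{[n]}{[n+1]}\,(p_n \otimes 1_1)\, e_n\, (p_n \otimes 1_1),
\]
where $e_n \in \TL_{n+1}$ is the Temperley-Lieb generator joining the last two strands by a cup-cap and $[k] = (q^k - q^{-k})/(q - q^{-1})$. Property (2) is automatic: both summands are $1_{n+1}$ plus something that factors through fewer than $n+1$ strands. For property (1), I would reduce to showing $e_i\cdot p_{n+1} = 0$ and $p_{n+1}\cdot e_j = 0$ for each generator $e_i, e_j$ of $\TL_{n+1}$, because any element of through degree less than $n+1$ is a $\C(q)$-linear combination of diagrams factoring through such a generator. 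For $i < n$ this follows directly from the inductive property (1) for $p_n$, since $e_i$ commutes past the rightmost strand and then hits $p_n$. The case $i = n$ is the substantive one and is where the precise coefficient $[n]/[n+1]$ is forced.

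The main obstacle is this last case, which requires the partial-trace identity: capping off the rightmost strand of $p_n \otimes 1_1$ (i.e.\ the ``Markov trace'' on one strand) equals $\tfrac{[n+1]}{[n]}\,p_{n-1}\otimes 1_1$ inside the region where it matters. I would prove this identity by a parallel induction on $n$, at each step using the recursive definition of $p_n$ together with the already-established annihilation property (1) at level $n$. Once this partial-trace formula is in hand, one computes $e_n (p_n \otimes 1_1) e_n = \frac{[n+1]}{[n]}\, e_n$ modulo terms killed by $p_n\otimes 1_1$ on the left, which makes the two summands in the recursive definition of $p_{n+1}$ cancel after composition with $e_n$, giving $e_n\cdot p_{n+1} = 0$. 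The right-sided annihilation is identical by the left-right symmetry of the construction, completing the verification of (1) and hence the existence of $p_n$ for every $n$.
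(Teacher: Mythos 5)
The paper states Theorem \ref{JWproj} without proof---it is the classical Jones--Wenzl result and the paper simply cites it---so there is no internal argument to compare against. Your proof via Wenzl's recursion is the standard one, and as a plan it is correct; both the uniqueness argument and the existence-via-recursion argument go through.

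A few places to tighten. In the uniqueness half you write ``yields $p_n\cdot y=0$'' but then use $p_n\cdot y'=0$ in the display; you of course mean property (1) of $p_n$ applied to $y'$ (and symmetrically property (1) of $p_n'$ applied to $y$). In the reduction of property (1) to the generators, you should say why it suffices: a planar diagram $a$ with $\tau(a)<n+1$ has some \emph{adjacent} pair of its $n+1$ endpoints joined by a cup (take an innermost cup), so $a$ factors through a cap at some position $i$; and $e_i\cdot p_{n+1}=0$ implies (cap at $i$)$\cdot p_{n+1}=0$ because pre-composing $e_i$ with the matching cup returns $(q+q^{-1})$ times that cap and $q+q^{-1}\neq 0$ in $\C(q)$. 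Finally, the partial trace formula is cleaner stated as an identity in $\TL_{n-1}$: closing the rightmost strand of $p_n$ gives $\frac{[n+1]}{[n]}p_{n-1}$. With that, the key computation is
\[
e_n(p_n\otimes 1_1)e_n(p_n\otimes 1_1)=\tfrac{[n+1]}{[n]}\,(p_{n-1}\otimes e_n)(p_n\otimes 1_1)=\tfrac{[n+1]}{[n]}\,e_n(p_n\otimes 1_1),
\]
where the last step uses that $p_{n-1}$ (on the first $n-1$ strands) commutes with $e_n$ and that $(p_{n-1}\otimes 1_1)\cdot p_n=p_n$ by property (1) at level $n$; this gives $e_n\cdot p_{n+1}=0$ exactly. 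The parallel induction you propose for the partial trace formula also works: tracing the recursion gives $[2]p_n-\frac{[n]}{[n+1]}p_n$, and $[2][n+1]-[n]=[n+2]$ closes it. None of these are gaps in the idea; they are just the details that a finished write-up should make explicit.
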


We refer to property (1) of the theorem by saying that $p_n$ \emph{kills turnbacks}.  Indeed, using the graphical notation in which we denote $a$ parallel strands by $\pic{aStrands}$ and $p_n:=\pic{nProjector}$, property (1) becomes equivalent to
\[
\pic{projectorWithTurnback1}\ \ \ \ =\pic{projectorWithTurnback2} \ \ \ \ =0
\]
for $0\leq i\leq n-2$.

\section{The categorification}\label{categorificationSection}
In this section we describe the categorification of the spaces $\TL^m_n$ due to Bar-Natan \cite{B-N05} and the categorification of the idempotent $p_n\in \TL_n$ due to Cooper-Krushkal.  The definition of the evaluation of spin networks lifts immediately to the categorification, and so we can speak of categorified (evaluations of) spin networks.

\subsection{Some basic categorical notions}
A category $\mathscr{A}$ is said to be $\Z$-linear if the morphism spaces are abelian groups, and composition induces a linear map $\Hom_{\mathscr{A}}(b,c)\otimes \Hom_{\mathscr{A}}(a,b)\rightarrow \Hom_{\mathscr{A}}(a,c)$.  If, in addition, $\mathscr{A}$ contains finite direct sums, then we say that $\mathscr{A}$ is \emph{additive}.  A functor $F:\mathscr{A}\rightarrow \mathscr{B}$ between $\Z$-linear categories is said to be $\Z$-linear if it induces a linear map $F:\Hom_\mathscr{A}(A,A')\rightarrow \Hom_\mathscr{B}(F(A),F(A'))$.  A functor $F:\mathscr{A}_1\times \dots \times \mathscr{A}_r\rightarrow \mathscr{B}$ is said to be \emph{multilinear} if it induces a map
\[ 
F:\Hom_{\mathscr{A}_1}(A_1,B_1)\otimes \dots \otimes \Hom_{\mathscr{A}_r}(A_r,B_r)\rightarrow \Hom_{\mathscr{B}}(F(A_1,\ldots, A_r), F(B_1,\ldots, B_r)).
\]

Let $\mathscr{A}^\Z$ denote the graded category over $\mathscr{A}$, i.e.\ objects are sequences $A^\bullet = (A^k)_{k\in \Z}$ with $A^k\in\mathscr{A}$, and morphisms are degree preserving multimaps.
\begin{definition}
For graded objects $A,B\in \mathscr{A}^\Z$, let $\Hom^\bullet_\mathscr{A}(A,B)$ be the graded abelian group spanned by homogeneous maps $A\rightarrow B$.  If $A$ and $B$ are chain complexes, then so is $\Hom^\bullet_\mathscr{A}(A,B)$, with differential given by the super-commutator: for $f\in \Hom^k_\mathscr{A}(A,B)$, define $D(f)$ to be
\[
[d,f] := d_B\circ f - (-1)^k f\circ d_A
\]
\end{definition}
The degree zero cycles are precisely the chain maps in the usual sense, and the homology group $H^k(\Hom^\bullet_\mathscr{A}(A,B))$ is equal to the space of degree zero chain maps $t^{-k}A\rightarrow B$ modulo chain homotopy.  Here $tA^\bullet$ denotes the chain complex $(tA)^k = A^{k-1}$ with differential $-d_A$.  We typically denote the homological degree of a morphism by $|f|=k$ if $f\in \Hom^k_\mathscr{A}(A,B)$.

Let $\Ch(\mathscr{A})$ be the category of chain complexes over $\mathscr{A}$ with differentials of degree $+1$ and morphism spaces $\Hom^\bullet_\mathscr{A}(\ ,\ )$.  We also have the full subcategories $\Ch^{\leq 0}(\mathscr{A})$, $\Ch^{\geq 0}(\mathscr{A})$, $\Ch^b(\mathscr{A})$, consisting of chain complexes which are concentrated in non-negative homological degrees, concentrated in non-positive degrees, respectively bounded.

\subsection{Categorification of Temperley-Lieb}
In \cite{B-N05} Bar-Natan regards generators of the space $\TL^m_n$ as objects of a (graded, additive) category $\BN{m}{n}$ in which the morphisms ensure that the defining relations for $\TL^m_n$ lift to isomorphisms.  Gluing of diagrams in the plane gives maps $\TL_{n_1}\otimes \dots \otimes \TL_{n_r}\rightarrow \TL_{n_0}$, and this lifts to mulilinear functors $\bn{n_1}\times \dots \times \bn{n_r}\rightarrow \bn{n_0}$.  In \cite{CK12b} Cooper and Krushkal prove that the split Grothendieck group $K_0(\bn{n})$ (actually a $\Z[q,q\inv]$-algebra) satisfies
\[
\C(q)\otimes_{\Z[q,q\inv]} K_0(\bn{n})\cong \TL_n,
\]
so we say that $\TL_n$ is categorified by $\bn{n}$.

\begin{definition}
Fix an identification $D^2\cong [0,1]^2$, and let $\Cob^m_n$ be the category with
\begin{itemize}
\item Objects of $\Cob^m_n$: properly embedded 1-submanifolds $T\subset D^2$ with $\partial T = \{\frac{i}{m+1}\:|\: 1\leq i\leq m\}\times\{1\}\cup \{\frac{j}{n+1}\:|\: 1\leq j\leq n\}\times\{0\}$.
\item Morphisms: $\Z[\a]$ linear combinations of (nicely embedded) cobordisms in $D^2\times [0,1]$ decorated with dots, modulo isotopy which fixes the boundary, and the following local relations:
\vskip 4pt
	\begin{enumerate}
		\item $\bpic{sphere}\ \ \ =0$, $\bpic{sphere-dot}\ \ \  = 1$, $\bpic{sphere-dot-dot}\ \ \ =0$, and $\bpic{sphere-dot-dot-dot}\ \ \ =\a$
		\item $\bpic{cylinder}\ \ \ =\bpic{1idemp}\ \ +\ \bpic{xidemp}\ \ $
		\item $\bpic{sheetWithHandle}\ \ \ \ \ \ = 2\:\bpic{sheet-dot}\ $.
	\end{enumerate}
\end{itemize}
Composition of morphisms in $\Cob^m_n$ is induced by stacking.
\end{definition}

Here $\a$ is a formal parameter; specializing to $\a=0$ gives the usual setting for Khovanov homology while specializing to $\a=1$ (and inverting 2) gives Lee's degeneration of Khovanov homology.  We make $\Cob^m_n$ into a graded category by introducing formal degree shifts $q^k a$ of objects, and defining the degree of a morphism $f:q^k a\rightarrow q^l b$ by setting $\deg \a =4$ and defining the degree of a cobordism $S$ by
\[
\deg(S)=l-k-\chi(S)+(m+n)/2
\]
Additivity of Euler characteristic under composition of cobordisms ensures that $\Cob^m_n$ becomes a graded category.

\begin{notation}
In what follows we work with differential graded categories of chain complexes over the graded categories $\Cob^m_n$, and so our moprhism spaces are actually bigraded.  We will refer to the \emph{homological degree} $\deg_h(f)$, the \emph{quantum degree} or $q$-degree $\deg_q(f)$ and the \emph{bidegree} $\deg(f) = (\deg_h(f),\deg_q(f))$.
\end{notation}

\subsection{Formally adjoining direct sums and products}\label{completionSection}
The categories $\Cob^n_m$ are not additive, meaning they do not contain all finite direct sums.  This makes it impossible to use many standard constructions in homological algebra, for example taking total complexes of a bicomplex.  We rectify the situation by formally adjoining sums (or products) to the categories $\Cob^m_n$.

\begin{definition}
Let $\BN{m}{n}$, $(\BN{m}{n})^\oplus$, $(\BN{m}{n})^\Pi$ be the closures of $\Cob^m_n$ under finite direct sums (equivalently products), countable direct sums, respectively countable direct products.  That is to say,
\begin{enumerate}
\item Let $\BN{m}{n}$ be the category with objects symbols $\bigoplus_{i=1}^k a_i $ with $a_i\in \Cob^m_n$ for $i\in\{1,\ldots,k\}$, and morphisms $\bigoplus_{i=1}^k a_i\rightarrow \bigoplus_{j=1}^l b_j$ given by matrices $({}_jf_i:a_i\rightarrow b_j)_{i,j}$.
\item Let $(\BN{m}{n})^\oplus$ be the category with objects the symbols $\bigoplus_{i\geq 1}a_i$ with $a_i\in \Cob^m_n$, $i\in\{1,2,\ldots\}$, and morphisms $\bigoplus_{i\geq 1}a_i\rightarrow \bigoplus_{j\geq 1}b_j$ given by matrices $({}_j f_i)\in \prod_{i,j\geq 1}\Hom_{\mathscr{A}}(a_i,b_j)$ with finite columns, i.e.\ for fixed $i$, ${}_j f_i=0$ for all but finitely many $j$.
\item Let $(\BN{m}{n})^\Pi$ be the category with objects the symbols $\prod_{i\geq 1}a_i$ with $a_i\in \Cob^m_n$, $i\in\{1,2,\ldots\}$, and morphisms $\prod_{i\geq 1}a_i\rightarrow \prod_{j\geq 1}b_j$ given by matrices $({}_j f_i)_\in \prod_{i,j\geq 1}\Hom_{\mathscr{A}}(a_i,b_j)$ with finite rows, i.e.\ for fixed $j$, ${}_j f_i=0$ for all but finitely many $i$.
\end{enumerate}
In any case composition of morphisms is given by matrix multiplication: ${}_k (f\circ g)_i = \sum_j {}_k f_j\circ {}_j g_i$ which is always a finite sum, in light of the finiteness conditions on morphisms.  In order for the categories $\BN{m}{n}^\oplus$ and $\BN{m}{n}^\Pi$ to be graded, it is necessary to assume additionally that any morphism is a finite sum of homogeneous morphisms, i.e. morphisms $({}_j f_i)$ with $\deg_q({}_j f_i) = \deg_q({}_{j'}f_{i'})$ for all $i,i',j,j'$.
\end{definition}

\begin{proposition}
$\BN{m}{n}$ contains finite direct sums, and contains $\Cob^m_n$ as a full subcategory.  $(\BN{m}{n})^\oplus$ and $(\BN{m}{n})^\Pi$ contain countable direct sums, respectively countable direct products, and each contains $\BN{m}{n}$ as a full subcategory.
\end{proposition}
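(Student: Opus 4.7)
My plan is to verify each assertion by a direct unpacking of the definitions, exploiting the matrix description of morphisms throughout. The proposition is essentially a formal check, so the real work is bookkeeping: making sure the asymmetric finiteness conditions (finite columns for $\oplus$, finite rows for $\Pi$) mesh correctly with composition, universal properties, and the grading.

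First I would handle $\BN{m}{n}$. Given $A = \bigoplus_{i=1}^{k} a_i$ and $B = \bigoplus_{j=1}^{\ell} b_j$, I define $A \oplus B := \bigoplus_{p=1}^{k+\ell} c_p$ with $c_p = a_p$ for $p \le k$ and $c_{k+q} = b_q$ for $1 \le q \le \ell$. The structure morphisms $\iota_A, \iota_B, \pi_A, \pi_B$ are the block-identity matrices, and the biproduct identities (e.g.\ $\pi_A \circ \iota_A = \Id_A$, $\iota_A \circ \pi_A + \iota_B \circ \pi_B = \Id_{A\oplus B}$) read off immediately from matrix multiplication. The universal property is then tautological: a matrix $A \oplus B \to X$ is determined by, and freely built from, its restrictions to the $A$- and $B$-blocks. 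The full embedding $\Cob^m_n \hookrightarrow \BN{m}{n}$ comes from identifying each $a \in \Cob^m_n$ with the length-one sum, so morphisms become $1\times 1$ matrices and composition reduces to composition in $\Cob^m_n$.

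For $\BN{m}{n}^\oplus$, given a countable family $A^{(n)} = \bigoplus_{i\geq 1} a_{n,i}$, I define $\bigoplus_n A^{(n)}$ by re-indexing the double sum through a bijection $\N \times \N \cong \N$; the inclusion $\iota_n \co A^{(n)} \to \bigoplus_n A^{(n)}$ is the matrix which is the identity on the $n$-th ``row'' of summands and zero elsewhere. Given a family $f_n \co A^{(n)} \to X$, the induced morphism is the matrix obtained by juxtaposing the columns of the $f_n$. The essential verification is that this juxtaposed matrix satisfies the finite-columns condition: each of its columns is literally a column of some $f_n$, hence has only finitely many nonzero entries. Uniqueness is immediate. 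The $\BN{m}{n}^\Pi$ case is formally dual, with rows replacing columns: assembling a map $X \to \prod_n A^{(n)}$ from a sequence $X \to A^{(n)}$ gives a matrix whose rows are the rows of the constituent matrices, and the finite-rows condition is preserved.

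The full embeddings $\BN{m}{n} \hookrightarrow \BN{m}{n}^\oplus$ and $\BN{m}{n} \hookrightarrow \BN{m}{n}^\Pi$ come from identifying $\bigoplus_{i=1}^k a_i$ with the countable symbol having $a_i$ in positions $1, \ldots, k$ and the empty direct sum (a zero object, which exists as the $k=0$ case of the $\BN{m}{n}$ construction) in all further positions; since every matrix entry involving a zero object vanishes, both the finiteness conditions are vacuous and the morphism sets agree with those of $\BN{m}{n}$ on the nose. Finally, for the graded enhancement I would observe that a matrix is homogeneous of $q$-degree $d$ iff every entry is, and that matrix multiplication and the biproduct structure maps respect this, so everything proven above carries over unchanged. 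The hardest step is really nothing more than checking that the finite-columns/finite-rows dichotomy is exactly the correct asymmetry to make sums functorial on one side and products functorial on the other — a routine but essential sanity check.
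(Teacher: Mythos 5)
Your approach is exactly the ``straightforward'' unwinding the paper has in mind (its own proof of this proposition consists of the single word \emph{Straightforward}), and your matrix bookkeeping for biproducts, the re-indexing of countable families via $\N\times\N\cong\N$, and the check that the finite-columns/finite-rows conditions are precisely what the respective universal properties demand are all correct and standard.

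Two minor caveats, both reflecting looseness in the paper's definitions rather than errors in your reasoning. First, $\Cob^m_n$ has no zero object (the empty $1$-manifold has the wrong boundary unless $m=n=0$), so ``the countable symbol having $a_i$ in positions $1,\ldots,k$ and a zero object in all further positions'' is not literally an object of $\BN{m}{n}^\oplus$ as defined, since that definition requires each $a_i\in\Cob^m_n$. One should read the definition as permitting countable index sets of any cardinality, or formally adjoin a zero object to $\Cob^m_n$ first; either repair is certainly what the author intends, since otherwise $\BN{m}{n}$ could not embed at all. Second, the grading constraint requires every morphism of $\BN{m}{n}^\oplus$ and $\BN{m}{n}^\Pi$ to be a finite sum of homogeneous maps; if one assembles a countable family $f_n\co A^{(n)}\to X$ whose $q$-degrees are unbounded, the resulting matrix violates this constraint, so the universal property of the coproduct (respectively product) holds only for families of bounded degree. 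Neither point is fatal, and the paper silently glosses over both, but a careful write-up should at least acknowledge them.
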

\begin{proof}
Straightforward.
\end{proof}

\subsection{Interpreting pictures as chain complexes}
Objects of $\BN{m}{n}$ can be glued together in the plane in precisely the same way as elements of $\TL^m_n$ (extending by multilinearity to direct sums of diagrams).  But more is true, we can also glue morphisms (linear combinations of surfaces in $[0,1]^3$) in the same way.  Collections of categories with this sort of algebraic structure are called canopolies in \cite{B-N05} and canopolises in \cite{MN08}.

We can also compose chain complexes together in the plane.  For an excellent introduction, see \cite{B-N05}.  The only differences here are that (1) we consider potentially unbounded chain complexes, hence the necessity for our categories to contain infinite direct sums or products, and (2) we keep track of partition of the boundary into ``top'' and ``bottom.''  We will describe the basic idea with an example.  Fix $A\in\Ch((\BN{4}{2})^\oplus)$, $B\in\Ch((\BN{2}{2})^\oplus)$, and let $T(A,B)$ be the picture
\[
T(A,B) = \pic{pictureAsChainComplex}
\]
We will describe in steps how to interpret $T(A,B)$ as a chain complex over $(\BN{6}{4})^\oplus$ in a functorial way.  

\begin{enumerate}
\item If $A=a$ and $B=b$ are objects of the appropriate $\Cob^m_n$, then because of the topological nature of the categories $\Cob^m_n$, we may define $T(a,b)$ to be the object given by gluing diagrams together.  Similarly, if $f:a\rightarrow a'$, $g:b\rightarrow b'$ are morphisms of $q$-degree zero, then we can define a morphism $T(f,g):T(a,b)\rightarrow T(a',b')$ by gluing cobordisms $f$, $g$, together with the identity cobordism away from $a,b$.
\item  We can extend $T(\ ,\ )$ linearly in each argument, obtaining a bilinear functor $T(\ ,\ ): (\BN{4}{2})^\oplus\times (\BN{2}{2})^\oplus\rightarrow (\BN{6}{4})^\oplus$.
\item  Suppose $(A^\bullet, d_A)\in \Ch(\BN{4}{2}^\oplus)$, $(B^\bullet, d_B)\in\Ch(\BN{2}{2}^\oplus)$ are arbitrary.  Define $T(A,B)^\bullet$ to be the chain complex $T(A,B)^k = \bigoplus_{i+j=k}T(A^i,B^j)$ with differential $d_{T(A,B)} = T(d_A,\Id_B)+T(\Id_A,d_B)$, where the action on morphisms is given by the Koszul sign rule: $T(f,g)|_{T(A^i,B^j)} = (-1)^{i|g|}T(f|_{A^i}, g|_{B^j})$.
\end{enumerate}

More generally, if $T(A_1,\ldots,A_r)$ is a similar looking picture with $r$ ``inputs,'' and $A_i$ are chain complexes over the appropriate categories $(\BN{m}{n})^\oplus$, then $T(A_1,\ldots, A_r)$ can be interpreted as the total complex of a multicomplex, which we refer to as a \emph{planar composition} of the $A_i$.  Reordering of the arguments gives a different total differential, but the resulting chain complexes are isomorphic.  In fact  this isomorphism is natural in the $A_j^\bullet$.  This proof is familiar to anyone who has shown that Khovanov homology does not depend on the ordering of crossings in a knot diagram.

\begin{remark}
We can compose chain complexes over the categories $(\BN{m}{n})^\Pi$ together in the plane in precisely the same way, replacing $\oplus$ everywhere with $\Pi$.  There is some ambiguity, however in defining the chain complex $T(A_1,\ldots, A_r)$ when $A_i$ are potentially unbounded chain complexes over $\BN{m_i}{n_i}$: we can take total complex using direct sum and regard the result in $\Ch(\BN{m}{n}^\oplus)$, or we can take total complex using direct product and regard the result in $\Ch(\BN{m}{n}^\Pi)$.  We resolve the ambiguity by including the symbol $\oplus$ or $\Pi$ somewhere in our pictures.  When there is no ambiguity, for example if $A_i\in\Ch^{\leq 0}(\BN{m_i}{n_i})$, then will often omit the symbol and regard the result as a chain complex over the appropriate $\Ch^{\leq 0}(\BN{m}{n})$.  Also, if $r=1$ then there is no ambiguity in defining $T(A)$ for $A\in \Ch(\BN{m}{n})$.
\end{remark}

We give special notation to certain planar compositions.
\begin{definition}\label{planarOpsDef} Define the following covariant functors:
\begin{itemize}
\item Let $\otimes:\Cob^m_k\times \Cob^k_n\rightarrow \Cob^m_n$ be $a \otimes b := \bpic{a-b}$.
\item Let $\Tr:\Cob_{n}\rightarrow \Cob_{0}$ be $\Tr(a) = \bpic{traceOfA}\ \ \ $.
\item Let $r:\Cob^{m}_{n}\rightarrow \Cob^{n}_{m}$ be $r(a) := \bpic{rot_a}\ \ \ $.
\end{itemize}
Denote similarly the extensions to categories of chain complexes over the appropriate $\BN{m}{n}$ and $(\BN{m}{n})^\oplus$.  Denote by the $\cotimes$ the extension of $\otimes$ to the categories of chain complexes over the appropriate $(\BN{m}{n})^\Pi$.
\end{definition}

\section{Duality}\label{dualitySection}

This section is dedicated to the proof that
\begin{equation}\label{dualityEq2}
\Hom^\bullet_{\BN{n}{m}}(A,B)\cong q^{(m+n)/2}\ket{\Tr(B\cotimes A^\vee)}
\end{equation}
naturally.  Here $(\ \ )^\vee$ is the contravariant functor which reflects all diagrams across the $x$-axis and reverses all degrees, $\ket{C}:=\Hom^\bullet_{\bn{0}^\Pi}(\emptyset, C)$, and $\Tr$ is the markov trace.

That equation \ref{dualityEq2} holds on the level of bigraded abelian groups follows from simple facts about the cobordism categories $\Cob^n_m$.  That the isomorphism respects the differentials will follow from naturality, which requires a little book-keeping.  To even have a well defined notion of naturality, we must first describe how $(\ \ )^\vee$ and $\Hom^\bullet$ are functorial in a differential graded sense.  The extension of (multi-) linear functors to differential graded functors on categories of chain complexes is fairly straightforward, but rather than appeal to general theory we describe the action of $(\ \ )^\vee$ and $\Hom^\bullet$ on morphisms explicitly.

\subsection{The functor $(\ \ )^\vee$}
\begin{definition}\label{dualityFunctor}
Let $(\ )^\vee: \BN{n}{m}\leftrightarrow \BN{m}{n}$ be the contravariant \emph{duality functor} which acts on objects by reflecting all diagrams about the $x$-axis and reversing $q$-degree, and which acts on morphisms (matrices of cobordisms in $[0,1]\times [0,1]\times[0,1]$) by applying $(x,y,z)\mapsto (x,1-y,1-z)$ to each entry and taking transpose of the result.
\end{definition}

We have an extension of $(\ )^\vee$ to mutually inverse contravariant functors $\BN{n}{m}^\oplus\leftrightarrow \BN{m}{n}^\Pi$ defined on objects by $(\bigoplus_i a_i)^\vee = \prod_i a_i^\vee$ and $(\prod_i a_i)^\vee = \bigoplus_i a_i^\vee$.

We would like to extend the functor $(\ )^\vee$ to chain complexes.  I.e.~we want a chain complex $A^\vee$ for each $A\in\Ch(\BN{n}{m})$ and a chain map $\Hom^\bullet(A,B)\rightarrow \Hom^\bullet(B^\vee,A^\vee)$ which is compatible with composition of morphisms in the (contravariant) differential graded sense.

\begin{definition}
For $A\in \Ch(\BN{n}{m}^\oplus)$, define $A^\vee\in \Ch(\BN{m}{n}^\Pi)$ to be the chain complex $(A^\vee)^k = (A^{-k})^\vee$ with differential $-d(_A)^\vee$, where $d_A$ is the differential on $A$.  On morphisms, for $f\in\Hom^k_{\BN{n}{m}^\oplus}(A,B)$ we define $f^\vee\in\Hom^k_{\BN{m}{n}^\Pi}(B^\vee,A^\vee)$ by commutativity of the following square
\[
\begin{diagram}
(B^\vee)^i & \rTo^{(f^\vee)_i}& (A^\vee)_{i+k}\\
\dTo^{=} & & \dTo_{=}\\
(B^{-i})^\vee & \rTo^{(-1)^{ik}f_{-i-k}^\vee} & A_{-i-k}.
\end{diagram}
\]
We have similar definitions with the roles of $\Pi$ and $\oplus$ reversed.
\end{definition}

\begin{proposition}
We have
\begin{enumerate}
\item $(\ )^\vee$ induces a degree zero chain map $\Hom^\bullet_{\BN{n}{m}^\oplus}(A,B) \rightarrow \Hom^\bullet_{\BN{m}{n}^\Pi}(B^\vee,A^\vee)$.
\item $(f\circ g)^\vee = (-1)^{|f||g|}g^\vee\circ f^\vee$.\qed
\end{enumerate}
\end{proposition}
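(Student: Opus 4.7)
The plan is to prove part (2) directly from the definitions by tracking Koszul signs, and then to deduce part (1) as a formal consequence. Both parts reduce to careful bookkeeping rather than anything substantive, so the real work is in setting up the conventions cleanly and then chasing indices.

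For part (2), fix $f\in\Hom^{|f|}_{\BN{n}{m}^\oplus}(A,B)$ and $g\in\Hom^{|g|}_{\BN{n'}{n}^\oplus}(B,C)$, so that $g\circ f$ has degree $|f|+|g|$ with components $(g\circ f)_k = g_{k+|f|}\circ f_k$. I would then compute $((g\circ f)^\vee)_i$ in two ways. On the one hand, by the definition of $(\ )^\vee$ on morphisms and the fact that $(\ )^\vee$ is contravariant on $\BN{m}{n}$ with no extra sign, one gets $((g\circ f)^\vee)_i = (-1)^{i(|f|+|g|)}\,f_{-i-|f|-|g|}^{\vee}\circ g_{-i-|g|}^{\vee}$. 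On the other hand, $(f^\vee\circ g^\vee)_i = (f^\vee)_{i+|g|}\circ (g^\vee)_i$, and each factor contributes its own sign from the definition of $(\ )^\vee$ on chain-complex morphisms, producing an overall prefactor $(-1)^{(i+|g|)|f|+i|g|}$. The two signs differ by exactly $(-1)^{|f||g|}$, which is the claim.

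For part (1), one must verify that $(\ )^\vee$ commutes with the Hom-complex differential $D(-)=[d,-]$, i.e.\ that $(D(f))^\vee = D(f^\vee)$. Unwinding $D(f)=d_B\circ f-(-1)^{|f|}f\circ d_A$ and applying $(\ )^\vee$, part (2) introduces a sign $(-1)^{|f|\cdot 1}=(-1)^{|f|}$ from each composition (since $|d_A|=|d_B|=1$). Meanwhile the right-hand side $D(f^\vee) = d_{A^\vee}\circ f^\vee - (-1)^{|f|}f^\vee\circ d_{B^\vee}$ uses the shifted differentials $d_{A^\vee}=-(d_A)^\vee$ and $d_{B^\vee}=-(d_B)^\vee$, which introduce additional minus signs. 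Collecting everything shows the two expressions agree termwise, so $(\ )^\vee$ preserves $D$; since it manifestly preserves homological degree, it is a degree zero chain map.

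The only real obstacle is keeping the various sign conventions consistent: the Koszul rule for composing graded morphisms, the supercommutator in $D$, the explicit $(-1)^{ik}$ in the definition of $(f^\vee)_i$, and the minus sign relating $d_{A^\vee}$ to $(d_A)^\vee$. None of these is individually tricky, but together they form a small sign algebra that one must check is internally consistent. A useful cross-check is to specialize part (2) to $f=g=d_A$, where it forces $((d_A)^\vee)^2 = -(d_A^2)^\vee = 0$ and thereby confirms that the convention $d_{A^\vee} = -(d_A)^\vee$ indeed yields a chain complex, reassuring us that the signs have been assembled correctly.
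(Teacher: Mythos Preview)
Your proof is correct and is exactly the kind of routine sign verification the paper has in mind: the proposition in the paper ends with a \qed\ symbol and no argument, so there is nothing to compare against beyond noting that your direct Koszul-sign computation is the intended verification. One minor quibble: in part (2) you set up $f:A\to B$ and $g:B\to C$ and then prove $(g\circ f)^\vee = (-1)^{|f||g|}\,f^\vee\circ g^\vee$, which is the statement with the roles of $f$ and $g$ swapped; this is of course equivalent, but you may want to align your labeling with the statement for clarity.
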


The duality functor interacts nicely with the planar compositions of chain complexes:

\begin{proposition}\label{dualityAndPlanarComp}
If $T^\oplus(A_1,\ldots, A_r)$ is a planar composition of complexes $A_i\in\Ch((\BN{m_i}{n_i})^\oplus)$, then $T^\oplus(A_1,\ldots, A_r)^\vee\cong \bar T^\Pi(A_1^\vee,\ldots, A_r^\vee)$ naturally, where $\bar T$ is the picture obtained from $T$ by vertical reflection.  In particular $(A \otimes  B)^\vee$ is naturally isomorphic to  $B^\vee \cotimes A^\vee$.\qed
\end{proposition}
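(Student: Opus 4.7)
The plan is to bootstrap from the topological level up to unbounded chain complexes. First I would establish the claim when each $A_i = a_i$ is a single object of $\Cob^{m_i}_{n_i}$ and each morphism is a single cobordism. In that case $T(a_1,\ldots, a_r)^\vee$ is literally the diagram obtained by reflecting the entire glued composite across the $x$-axis, which equals the gluing $\bar T(a_1^\vee,\ldots, a_r^\vee)$ of the separately reflected pieces. For a morphism of the form $T(\Id,\ldots, f_l,\ldots, \Id)$ with $f_l$ a single cobordism, the reflection $(x,y,z)\mapsto(x,1-y,1-z)$ commutes with stacking against the identity cobordisms in the unchanged regions, so it equals $\bar T(\Id,\ldots, f_l^\vee,\ldots, \Id)$. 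Linear and matrix extension handle arbitrary morphisms in $\Cob^{m_l}_{n_l}$, and multilinear distribution over direct sums, together with the interchange of $\oplus$ and $\prod$ built into the definition of $(\ )^\vee$ on the completions, promotes the identification to $(\BN{m_i}{n_i})^\oplus$.

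Next I would unwind chain complexes degree by degree. For $A_i \in \Ch(\BN{m_i}{n_i}^\oplus)$,
\begin{equation*}
\bigl(T^\oplus(A_1,\ldots, A_r)^\vee\bigr)^k \;=\; \Bigl(\bigoplus_{i_1+\cdots+i_r=-k} T(A_1^{i_1},\ldots, A_r^{i_r})\Bigr)^{\!\vee},
\end{equation*}
which by the previous step equals $\prod_{i_1+\cdots+i_r=-k} \bar T\bigl((A_1^\vee)^{-i_1},\ldots, (A_r^\vee)^{-i_r}\bigr)$, and the reindexing $j_l = -i_l$ identifies this with $\bar T^\Pi(A_1^\vee,\ldots, A_r^\vee)^k$. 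The main obstacle will be verifying that the differentials agree. The total differential on $T^\oplus(A_1,\ldots, A_r)$ carries Koszul signs $(-1)^{i_1+\cdots+i_{l-1}}$ from the planar composition, and $(\ )^\vee$ introduces further signs according to its definition on homogeneous maps. Independently, the total differential on $\bar T^\Pi(A_1^\vee,\ldots, A_r^\vee)$ is assembled from the $d_{A_l^\vee} = -d_{A_l}^\vee$ with their own Koszul signs in the reindexed $j$-variables. Tracking both sets of signs through the substitution $j_l=-i_l$, I expect them to coincide; this is the same bookkeeping that ensures $(C\otimes D)^\vee\cong D^\vee\otimes C^\vee$ for ordinary chain complexes, and it is the only genuinely computational step in the argument.

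Naturality in morphisms between the $A_i$ then reduces entry by entry to the single-diagram case, so it comes for free. The special claim $(A\otimes B)^\vee\cong B^\vee\cotimes A^\vee$ follows from the general statement because vertical reflection reverses the order of vertical stacking: the reflected picture $\bar T$ of $T(A,B) = A$ on top of $B$ has its first input on the bottom, so $\bar T(A^\vee, B^\vee) = B^\vee$ on top of $A^\vee$, i.e.\ $B^\vee\otimes A^\vee$, with $\otimes$ upgraded to $\cotimes$ because we now live in the $\Pi$-completion.
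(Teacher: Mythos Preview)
Your proposal is correct. The paper itself offers no proof of this proposition: the \qed\ follows the statement directly, so there is nothing to compare your argument against beyond noting that the author regards the result as routine. Your outline supplies exactly the routine verification the paper omits---reducing to the cobordism level where reflection manifestly commutes with gluing, extending through the $\oplus/\Pi$ interchange, and then checking the Koszul signs on the total differential---and the one computational step you flag (the sign match under $j_l=-i_l$) is indeed the standard bookkeeping for $(C\otimes D)^\vee\cong D^\vee\otimes C^\vee$ on chain complexes, which goes through without surprises.
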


\subsection{The functor $\Hom^\bullet_\mathscr{A}$}

\begin{definition}\label{homAsFunctor}
Let $\mathscr{A}$ be a $\Z$-linear category, and let $A,B,C$ be graded objects over $\mathscr{A}$.  Define degree preserving linear maps
\begin{itemize}
\item For $f\in \Hom^k_\mathscr{A}(A,B)$, define $R_f:\Hom^\bullet_\mathscr{A}(B,C)\rightarrow \Hom^\bullet_\mathscr{A}(A,C)$ by $R_f(\a) = (-1)^{|\a||f|}\a\circ f$.
\item For $g\in \Hom^l_\mathscr{A}(B,C)$, define $L_g:\Hom^\bullet_\mathscr{A}(A,B)\rightarrow \Hom^\bullet_\mathscr{A}(A,C)$ by $L_g(\b) = g\circ b$.
\end{itemize}
Put $\Hom^\bullet_{\mathscr{A}}(f\otimes g) := R_f\circ L_g$.
\end{definition}

The proof of the following is straightforward.

\begin{proposition}\label{LRprop}
We have
\begin{enumerate}
\item $L_f\circ L_{f'} = L_{f\circ f'}$, $R_g\circ R_{g'} = (-1)^{|g||g'|}R_{g'\circ g}$, and $R_f\circ L_g = (-1)^{|f||g|}L_g\circ R_f$.
\item $\Hom^\bullet_\mathscr{A}$ gives a degree zero chain map
\[\Hom^\bullet_\mathscr{A}(A',A)\otimes \Hom^\bullet_\mathscr{A}(B,B')\rightarrow \Hom^\bullet_{\Z}(\Hom^\bullet_\mathscr{A}(A,B),\Hom^\bullet_\mathscr{A}(A',B'))
\]
\item The differential on $\Hom^\bullet_\mathscr{A}(A,B)$ is $L_{d_B}- R_{d_A}$. \qed
\end{enumerate}.
\end{proposition}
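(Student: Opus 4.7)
The plan is to verify the three parts in the order (3), (1), (2). Part (3) identifies the internal differential on $\Hom^\bullet_\mathscr{A}$ in the $L$/$R$ language, which is required even to make sense of the chain map assertion in (2); part (1) collects the Koszul sign gymnastics needed for the rearrangement; and (2) then follows by direct computation using (1) and (3).

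For part (3), I would simply evaluate $L_{d_B}$ and $R_{d_A}$ on an arbitrary homogeneous $f\in\Hom^k_\mathscr{A}(A,B)$ using Definition \ref{homAsFunctor}. Since $|d_A|=|d_B|=1$, the sign in $R_{d_A}(f) = (-1)^{|f||d_A|}f\circ d_A$ collapses to $(-1)^k$, and $L_{d_B}(f) = d_B\circ f$ by definition, so $(L_{d_B}-R_{d_A})(f) = d_B\circ f - (-1)^k f\circ d_A$ matches the super-commutator $[d,f]$ verbatim.

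For part (1), I would evaluate each identity on a homogeneous argument. The identity $L_f\circ L_{f'} = L_{f\circ f'}$ is immediate from associativity of composition in $\mathscr{A}$. For the second, applying $R_g\circ R_{g'}$ to $\alpha$ produces a sign $(-1)^{|\alpha||g'|+(|\alpha|+|g'|)|g|}$, which factors as $(-1)^{|g||g'|}(-1)^{|\alpha|(|g|+|g'|)}$ and so agrees with $(-1)^{|g||g'|}R_{g'\circ g}(\alpha)$. For the third, $R_f\circ L_g$ and $L_g\circ R_f$ applied to $\beta$ both produce $g\circ\beta\circ f$; comparing the two signs $(-1)^{(|g|+|\beta|)|f|}$ and $(-1)^{|\beta||f|}$ yields the discrepancy $(-1)^{|f||g|}$.

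For part (2), I would apply the differential on $\Hom^\bullet_\Z(\Hom^\bullet_\mathscr{A}(A,B),\Hom^\bullet_\mathscr{A}(A',B'))$---which by part (3) is the super-commutator with $L_{d_B}-R_{d_A}$ on the source and $L_{d_{B'}}-R_{d_{A'}}$ on the target---to $\Phi(f\otimes g) := R_f\circ L_g$, obtaining
\[
(L_{d_{B'}}-R_{d_{A'}})\circ R_f\circ L_g \;-\; (-1)^{|f|+|g|}R_f\circ L_g\circ(L_{d_B}-R_{d_A}).
\]
Using part (1), I would commute each $L_{d_?}$ past $R_f$ and each $R_{d_?}$ past $L_g$, with signs arranged so that the four resulting terms regroup into $R_{Df}\circ L_g + (-1)^{|f|}R_f\circ L_{Dg}$, which is $\Phi\bigl(Df\otimes g + (-1)^{|f|}f\otimes Dg\bigr) = \Phi\bigl(D(f\otimes g)\bigr)$. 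This is exactly the chain map property, so $\Phi$ is a degree zero chain map. The only real obstacle is sign bookkeeping, and the three identities in part (1) are packaged precisely so that this cancellation is transparent.
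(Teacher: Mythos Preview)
Your proposal is correct; the paper itself omits the proof entirely, marking it as ``straightforward'' with an immediate \qed, so your direct verification of (3), then (1), then (2) is exactly the routine sign check the author has in mind. One small cosmetic note: in your part (1) discussion of $L_f\circ L_{f'}$ there is a typo ($L_g(\beta) = g\circ b$ in the paper should of course be $g\circ\beta$, and you correctly use $\beta$), but your computations match the definitions as intended.
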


\subsection{Ordinary duality} \label{ordinaryDualitySection}

\begin{definition}
Let $\ket{\ \ }:\bn{0}\rightarrow \Z-\mathbf{mod}$ be the functor $\ket{c} = \Hom(\emptyset, c)$.
\end{definition}

When $A^\bullet = A^0 =:a\in \BN{0}{2n}$ and $B^\bullet = B^0 =: b\in\BN{0}{2n}$, equation \ref{dualityEq2} becomes

\begin{proposition}\label{ordinaryDualityIso}
$\Hom_{\BN{0}{2n}} (a,b) \cong q^n\ket{b\otimes a^\vee}$ naturally.
\end{proposition}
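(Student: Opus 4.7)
The plan is to construct the isomorphism geometrically, via an explicit folding of cobordisms, then check degrees and naturality.

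First, for single diagrams $a, b \in \Cob^0_{2n}$, I would define a map $\Phi \colon \Hom_{\Cob^0_{2n}}(a,b) \to \ket{b \otimes a^\vee}$ as follows. A morphism $S \colon a \to b$ is a $\Z[\alpha]$-linear combination of dotted surfaces in $D^2 \times [0,1]$ with bottom boundary $a$, top boundary $b$, and $2n$ vertical seams on $\partial D^2 \times [0,1]$ connecting the endpoints. Apply a self-homeomorphism of the 3-ball $D^2 \times [0,1]$ that swings the bottom face upward, rotating $a$ by $180^\circ$ so that its image, which is precisely $a^\vee$ by Definition \ref{dualityFunctor}, sits beside $b$ on the new top face. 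The result is a cobordism $\Phi(S) \colon \emptyset \to b \otimes a^\vee$ carrying the same underlying surface and dot pattern. The sphere, neck-cutting, and handle-dot relations are local, hence preserved by the homeomorphism, so $\Phi$ descends to the quotient. The inverse $\Phi^{-1}$ is the reverse folding, making $\Phi$ a bijection. Extension to $\BN{0}{2n}$ is by applying $\Phi$ entrywise to matrices, compatibly with $(\bigoplus_i a_i)^\vee = \prod_i a_i^\vee$ on the right.

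Second, I would verify the $q^n$ grading shift. By the degree formula, a cobordism in $\Cob^0_{2n}$ has $q$-degree $-\chi(S) + n$, while its image in $\Cob^0_0$ has $q$-degree $-\chi(\Phi(S)) = -\chi(S)$, since the folding homeomorphism preserves Euler characteristic. Hence $\Phi$ lowers $q$-degree by exactly $n$, which is absorbed by the $q^n$ shift on the right-hand side. Dots are preserved under the homeomorphism, so the $\alpha$-degree is unchanged and the full bigrading matches.

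Third, naturality. Given $f \colon a' \to a$ and $g \colon b \to b'$, I would verify that $\Phi(g \circ S \circ f) = (g \otimes f^\vee) \circ \Phi(S)$ for every $S \colon a \to b$. Geometrically, precomposing $S$ with $f$ along the bottom becomes, after folding, the operation of pasting $f$ (rotated by $180^\circ$, hence $f^\vee$) against the $a^\vee$-half of $\Phi(S)$; postcomposing with $g$ stays on the $b$-half. Together this yields $(g \otimes f^\vee) \circ \Phi(S)$, establishing the naturality square. The main obstacle will be calibrating the folding homeomorphism so that its action on the bottom face coincides on the nose with the involution $(x,y,z) \mapsto (x, 1-y, 1-z)$ of Definition \ref{dualityFunctor}. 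Once that convention is fixed, everything else is essentially formal: the local $\Cob$-relations are invariant under homeomorphism, Euler characteristic and dot-counts are topological, and folding commutes with boundary operations.
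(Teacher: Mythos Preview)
Your proposal is correct, but it takes a different route from the paper. You argue by a single geometric ``folding'' homeomorphism of the ambient 3-ball, pushing the bottom face around so that the cobordism $S:a\to b$ is re-read as a cobordism $\emptyset\to b\otimes a^\vee$; bijectivity, the degree shift, and naturality then follow because Euler characteristic, dots, and the local relations are all homeomorphism-invariant. The paper instead constructs the isomorphism algebraically in the language of rigid duality: it introduces explicit unit and counit maps $\eta_a:\emptyset\to a\otimes a^\vee$ (caps) and $s_a:a^\vee\otimes a\to 1$ (saddles), proves the zig-zag identities and the adjointness relation $s_b\circ(\Id\otimes f)=s_a\circ(f^\vee\otimes\Id)$ in Proposition~\ref{unitAndSaddleProp}, and then sets $\phi(f)=(f\otimes\Id_{a^\vee})\circ\eta_a$ with inverse $\psi(\zeta)=(\Id_b\otimes s_a)\circ(\zeta\otimes\Id_a)$. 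Of course the two constructions produce isotopic cobordisms --- your folded surface is exactly $(f\otimes\Id_{a^\vee})\circ\eta_a$ once you isolate the cap at the fold --- so the difference is one of packaging. Your approach is quicker and more geometric; the paper's approach pays for its extra bookkeeping by isolating the maps $\eta_a$ and $s_a$ as standalone objects satisfying the rigid-duality axioms, which are then reused verbatim in Corollary~\ref{homRotation}, Lemma~\ref{epsilonLemma}, and the analysis of the saddle action in Section~\ref{sheetAlgSection}.
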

Before proving this, we introduce some notation.

\begin{definition}
Let $a\in\BN{0}{2n}$ be indecomposable.  That is to say, $a$ is a diagram with no circle components.
\begin{itemize}
\item Let $\eta_a:\emptyset\rightarrow a\otimes a^\vee$ be the map of $q$-degree $-n$ given by the minimal cobordism.  More precisely, $a\otimes a^\vee$ consists of $n$ disjoint circles, and $\eta_a$ is $n$ disjoint disks which cap off each of these components.
\item Let $s_a:a^\vee \otimes a\rightarrow 1$ be the map of $q$-degree $n$ given by the minimal cobordism.  More precisely, $a^\vee\otimes a$ is the disjoint union of $a$ and its reflection, and $s_a$ is the cobordism given by attaching 1-handles on matching pairs of components ($n$ in total).
\end{itemize}
\end{definition}

\begin{remark}
The idea of the proof of proposition \ref{ordinaryDualityIso} can be represented schematically as follows.  Indicate objects of $\BN{0}{2n}$ by arcs, and denote $f\in\Hom_{\BN{0}{2n}}(a,b)$, $\zeta\in\Hom_{\bn{0}}(\emptyset, b\otimes a^\vee)$, $\eta_a$, and $s_a$ as in (here cobordisms are read top-to-bottom):
\[
f = \pic{verticalSheetWithF}\ \ \ \ \ \ \ \ \ \zeta = \pic{zetaCap} \ \ \ \ \ \ \ \ \ \eta_a = \pic{eta}\ \ \ \ \ \ \ \  \ s_a = \pic{dualitySaddlePic}
\]
In proposition \ref{unitAndSaddleProp} we will prove that
\[
\pic{etaWithSaddle} \ \ \ \  \ \ \ \ \ \ \ \ = \pic{verticalSheet} \ \ \ \ , \hskip .2in  \pic{fSaddle} \ \ \ \ \ \ \ \ \ \ \ \ = \pic{fDualSaddle} \ \ \ \ \ \ \ \ \ \ \ \ , \text{ and } \ \ \ \  \pic{fCap} \ \ \ \ = \pic{fDualCap} \ \ \ \ .
\]
We then define maps $\phi:\Hom(a,b)\rightarrow \Hom(\emptyset, b\otimes a^\vee)$ and $\psi:\Hom(\emptyset,b\otimes a^\vee)\rightarrow \Hom(a,b)$ by

\[
\phi(f) = \pic{dualityIso_f} \ \ \ \ \ \ \ \ \hskip.4in \psi(\zeta) = \pic{dualityIso_zeta}\ \ \ \ \ \ \ \ \ \ \ \ \ \ .
\]
That $\phi$ and $\psi$ are inverse isomorphism can be seen schematically:
\[
\pic{dualityIso_inverses4}\ \ \ \ \ \ \ \ \ \ \ \ \ = \pic{dualityIso_inverses5}\ \ \ \ \ \ \ \ \ \ \ \ \ \  = \pic{zetaCap}\ \ \ , \ \ \ \ \ \ \pic{dualityIso_inverses1}\ \ \ \ \ \ \ \ \ \ \ \ \ = \pic{dualityIso_inverses2}\ \ \ \ \ \ \ \ \ \ \ \ \   = \pic{verticalSheetWithF}\ \ \ .
\]
\end{remark}

\begin{proposition}\label{unitAndSaddleProp}
Fix indecomposable diagrams $a,b\in\BN{0}{2n}$ and $f:a\rightarrow b$.  We have
\begin{enumerate}
\item $(\Id_a \otimes s_a)\circ(\eta_a\otimes \Id_a) = \Id_a$ and $(s_a\otimes \Id_{a^\vee})\circ(\Id_{a^\vee}\otimes \eta_a) = \Id_{a^\vee}$
\item $s_b\circ (\Id_{b^\vee}\otimes f) = s_a\circ(f^\vee\otimes \Id_a)$
\item $(f\otimes \Id_{a^\vee})\circ \eta_a = (\Id_{a}\otimes f^\vee)\circ \eta_b$
\end{enumerate}
\end{proposition}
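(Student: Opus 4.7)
The plan is to verify all three identities as geometric statements inside Bar-Natan's cobordism category, exploiting the fact that morphisms are cobordisms considered modulo ambient isotopy rel boundary (together with the local relations). The idea is uniformly: in each case, construct an ambient isotopy of $D^2 \times [0,1]$ carrying one side of the equation to the other. Since the defining relations of $\Cob^m_n$ are local and isotopy-invariant, and since dots are merely transported along the isotopy, the equalities then hold on the nose.

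For part (1), the zigzag (snake) identity: The composition $(\Id_a \otimes s_a)\circ(\eta_a \otimes \Id_a)$ is the surface in $D^2 \times [0,1]$ obtained, for each of the $n$ arcs of the indecomposable diagram $a$, by gluing a minimal disk cap (contributed by $\eta_a$) to a minimal $1$-handle (contributed by $s_a$) while the other arcs pass straight through as cylinders. Locally, each such arc-plus-cap-plus-handle forms the classical ``S-curve'' zigzag whose isotopy rel boundary to a straight cylinder is elementary. Performing these local isotopies in parallel produces the identity cylinder on $a$. The $q$-degrees cancel as $-n + n = 0$. The second zigzag identity is identical with the roles of $a$ and $a^\vee$ interchanged.

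For parts (2) and (3), naturality of $s$ and $\eta$ under $(\ )^\vee$: We argue for (2); part (3) is strictly analogous with the cap in place of the saddle. By linearity and additivity of the construction in $f$, it suffices to assume $f$ is a single (possibly dotted) connected cobordism. The left-hand side $s_b\circ(\Id_{b^\vee}\otimes f)$ is the surface that places the cylinder on $b^\vee$ on one side and $f$ on the other, then attaches $n$ bottom $1$-handles connecting matching boundary components. Now consider the ambient isotopy of $D^2\times[0,1]$ that drags the subsurface $f$ through these bottom $1$-handles, sweeping it onto the $b^\vee$ side. As $f$ passes through the handle, its orientation in the $y$- and $z$-directions is reversed; the net effect on $f$ is precisely the reflection $(x,y,z)\mapsto(x,1-y,1-z)$ used in Definition~\ref{dualityFunctor} to define $f^\vee$. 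Any dots on $f$ are carried along unchanged. After the isotopy, the bottom handles have been converted into the saddle $s_a$ (because $f$'s $a$-boundary has now become the inner boundary of the handle attachment), and the resulting surface is exactly $s_a\circ(f^\vee\otimes\Id_a)$.

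The only real obstacle is the geometric bookkeeping in (2)-(3): one must verify that sliding $f$ through the bottom saddle implements exactly the reflection $(x,y,z)\mapsto(x,1-y,1-z)$ rather than some other involution. This is best seen from an explicit picture of the isotopy, where the rotation around the core of the $1$-handle produces both the $y$-flip (because $f$ crosses from one side of the tensor to the other) and the $z$-flip (because the top boundary of $f$ becomes the bottom of its image); together these are precisely the reflection defining $(\ )^\vee$. Once this single geometric point is confirmed, parts (2) and (3) are immediate.
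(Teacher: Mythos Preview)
Your argument for part (1) is essentially the paper's: the caps cancel the $1$-handles and the resulting cobordism is isotopic rel boundary to the identity cylinder.

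For parts (2) and (3) you take a genuinely different route from the paper. You argue (2) and (3) in parallel by a single global isotopy that drags the cobordism $f$ through the saddle (or cap) handles, and you identify the effect on $f$ with the reflection defining $(\ )^\vee$. The paper instead reduces (2) to elementary cobordisms: since the identity is linear and compatible with composition, it suffices to check it when $f$ is a single dot (where it follows by sliding the dot across the saddle) or a single $1$-handle (where it is a localized isotopy). The paper then obtains (3) purely algebraically from (1) and (2), by tensoring (2) with $b$ on the left and $a^\vee$ on the right, precomposing with $\eta_b\otimes\eta_a$, and simplifying via the zigzag identities. Your approach is more uniform and closer in spirit to the graphical calculus of rigid monoidal categories, but the paper's decomposition trades a somewhat delicate global isotopy (``drag an arbitrary surface through $n$ handles and check the induced reflection'') for two trivially verifiable local moves, at the cost of an extra algebraic manipulation to recover (3). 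Both are valid; the paper's version is easier to make completely explicit, while yours makes the naturality of $s$ and $\eta$ under duality more transparent.
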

Before proving we remark that, morally speaking, the saddle maps $s_a$ make $a$ into a left $\ket{a\otimes a^\vee}$-module (and $a^\vee$ into a right $\ket{a\otimes a^\vee}$-module).  Part (1) of the proposition is the statement that $\eta_a\in\ket{a\otimes a^\vee}$ acts as the identity.

We may also think of the saddle maps as giving a family of bilinear forms $a^\vee\otimes a\rightarrow 1_n$.  Part (2) of the proposition could be interpreted as saying that $f$ and $f^\vee$ are adjoint with respect to this form.  Part (3) is a formal consequence of (1) and (2).
\begin{proof}
(1)  It is clear from the definitions that the 0-handles in $\eta_a$ cancel the 1-handles in $s_a$, so that  $(\Id_a \otimes s)\circ(\eta_a\otimes \Id_a)\cong \Id_a$ as abstract cobordisms.  With a little thought, it can be seen that they are isotopic in the cylinder, hence equal in $\End_{\BN{0}{2n}}(a)$.  The second statement is proven similarly.

(2)  By linearity, it suffices to assume that $f$ is a cobordism decorated with dots.  If (2) holds for $f$ and $g$, it clearly holds for $f\circ g$.  So by decomposing $f$ into elementary pieces, it suffices to assume that $f$ is a saddle or a dot.  If $f$ is a dot then (2) holds by sliding dots.  If $f$ is a saddle---i.e.\ a cobordism given by attaching a single 1-handle to $a\times I$---then (2) holds by isotopy invariance.

(3)  Tensor (2) on the left with $b$ and on the right with $a^\vee$ and precompose with $\eta_b\otimes \eta_a$ to obtain
\[
(\Id_b\otimes s_b\otimes \Id_{a^\vee})\circ (\Id_b\otimes \Id_{b^\vee}\otimes f\otimes \Id_{a^\vee}) \circ (\eta_b\otimes \eta_a)= (\Id_b\otimes s_a\otimes \Id_{a^\vee}) \circ(\Id_b\otimes f^\vee\otimes \Id_a\otimes \Id_{a^\vee})\circ (\eta_b\otimes \eta_a)
\]
On the left-hand side, the $\eta_b\otimes \Id_a\otimes \Id_{a^\vee}$ commutes past the middle term and cancels the first term (using (1)).  On the right-hand side, the $\Id_b\otimes \Id_{b^\vee} \otimes \eta_a$ commutes past the middle term and cancels the first term (again using (1)).  After this simplification we obtain $ (f\otimes \Id_{a^\vee})\circ \eta_a = (\Id_b\otimes f^\vee)\circ \eta_b$, which is (3).
\end{proof}
\begin{proof} (of proposition \ref{ordinaryDualityIso}).
Assume first that $a,b\in\BN{0}{2n}$ are diagrams with no circle components.  The case where $a,b\in\BN{0}{2n}$ are arbitrary follows at once from this case, since any object of $\BN{0}{2n}$ is a direct sum of such diagrams with shifts.  Define degree zero maps $\phi:\Hom_{\BN{0}{2n}}(a,b)\leftrightarrow q^n\ket{b\otimes a^\vee}:\psi$ by $\phi(f) = (f\otimes \Id_{a^\vee})\circ \eta_a$ and $\psi(\zeta)=(\Id_b\otimes s_a)\circ (\zeta\otimes  \Id_a)$.  Then $\phi$ and $\psi$ are inverses:
\begin{eqnarray*}
\phi(\psi(\zeta))
&=& (\psi(\zeta)\otimes \Id_{a^\vee})\circ \eta_a\\
&=&(\Id_b\otimes s_a\otimes \Id_{a^\vee})\circ (\zeta \otimes \Id_a\otimes \Id_{a^\vee})\circ \eta_a\\
&=&(\Id_b\otimes s_a\otimes \Id_{a^\vee})\circ (\Id_b\otimes \Id_{a^\vee}\otimes \eta_a)\circ \zeta\\
&=&(\Id_b\otimes \Id_{a^\vee})\circ \zeta\\
&=& \zeta
\end{eqnarray*}
and
\begin{eqnarray*}
\psi(\phi(f))
&=& (\Id_b\otimes s_a)\circ (\phi(f)\otimes  \Id_a)\\
&=&  (\Id_b\otimes s_a)\circ (f\otimes \Id_{a^\vee}\otimes \Id_a)\circ (\eta_a\otimes  \Id_a) \\
&=&  f\circ  (\Id_a \otimes s)\circ(\eta_a\otimes \Id_a)\\
&=&  f
\end{eqnarray*}

Now, suppose we have maps $a{\buildrel f\over \rightarrow }b{\buildrel g\over \rightarrow c}$.  Then $\phi(g\circ f) =  ((g\circ f)\otimes \Id_{a^\vee})\circ \eta_a = (g \otimes \Id_{a^\vee})\circ (f\otimes \Id_{a^\vee}) \circ \eta_a$.  On one hand, this gives $\phi(g\circ f) = (g\otimes \Id_{a^\vee}) \circ \phi(f)$.  On the other hand, using part (3) of proposition, this is also equal to 
\[
\phi(g\circ f) = (g\otimes \Id_{a^\vee})\circ  (\Id_b\otimes f^\vee)\circ \eta_b =   (\Id_c\otimes f^\vee)\circ (g\otimes \Id_{b^\vee})\circ \eta_b = (\Id_c\otimes f^\vee)\circ \phi(g)
\]
This implies naturality and completes the proof of the proposition.
\end{proof}

\subsection{Differential graded duality}

\begin{definition}[The tautological TQFT]\label{tautTQFT}
Abusing notation slightly, let $\ket{\ \ }:\Ch(\bn{0})\rightarrow \Ch(\Z-\mathbf{mod})$ denote the functor $\ket{C} = \Hom^\bullet_{\bn{0}}(\emptyset, C)$, as well as obvious extensions to complexes over $\bn{0}^\Pi$ and $\bn{0}^\oplus$.
\end{definition}

\begin{theorem}\label{dualityIsomorphism}
We have isomorphisms $\phi_{A,B}:\Hom^\bullet_{\BN{m}{n}}(A,B) \cong q^{(n+m)/2}\ket{B\cotimes A^\vee}$ which are natural in the sense that $\phi_{A',B'}\circ (L_g\circ R_f) = \phi_{A,B}\circ \ket{g\cotimes f^\vee}$.
\end{theorem}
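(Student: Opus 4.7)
My plan is to bootstrap from the ordinary bigraded duality of Proposition \ref{ordinaryDualityIso} to the chain level. I define $\phi_{A,B}$ componentwise and then use the naturality statement of the theorem itself---applied with $f = d_A$ and $g = d_B$---to verify that it is a chain map. First I would note that Proposition \ref{ordinaryDualityIso} extends verbatim from $\BN{0}{2n}$ to general $\BN{m}{n}$: the cobordisms $\eta_a \co 1_m \to a \otimes a^\vee$ and $s_a \co a^\vee \otimes a \to 1_n$ make sense for any indecomposable $a \in \BN{m}{n}$, the identities of Proposition \ref{unitAndSaddleProp} still hold by the same local-isotopy argument, and the formulas $\phi(f) = (f \otimes \Id_{a^\vee}) \circ \eta_a$ and $\psi(\zeta) = (\Id_b \otimes s_a) \circ (\zeta \otimes \Id_a)$ continue to define mutually inverse degree-zero isomorphisms $\Hom_{\BN{m}{n}}(a,b) \cong q^{(m+n)/2}\ket{b \cotimes a^\vee}$.

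Next, for chain complexes $A,B$ I would assemble these componentwise: a homogeneous $f \in \Hom^k_{\BN{m}{n}}(A,B)$ is a collection of morphisms $f_i \co A^i \to B^{i+k}$, and applying $\phi_{A^i,B^{i+k}}$ to each $f_i$ produces an element in homological degree $k$ of $\ket{B \cotimes A^\vee}$; the definitions of $(\,)^\vee$ (reversing homological degree) and $\cotimes$ (total complex using direct products) conspire so that the indexing lines up correctly, and bijectivity on bigraded groups is immediate from step one. Naturality in individual morphisms is then a direct consequence of Proposition \ref{unitAndSaddleProp}(3): for $f \co a' \to a$, $g \co b \to b'$, and $\alpha \co a \to b$,
\[
\phi(g \alpha f) = (g \alpha f \otimes \Id_{a'^\vee}) \circ \eta_{a'} \;=\; (g \otimes f^\vee) \circ (\alpha \otimes \Id_{a^\vee}) \circ \eta_a \;=\; (g \otimes f^\vee) \circ \phi(\alpha),
\]
where the middle equality slides $f$ across $\eta_{a'}$ using \ref{unitAndSaddleProp}(3); with Koszul signs inserted appropriately this globalizes to the stated naturality formula.

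Finally, compatibility with differentials falls out by specializing naturality to $f = d_A$ and $g = d_B$: by Proposition \ref{LRprop}(3) the differential on $\Hom^\bullet(A,B)$ is $L_{d_B} - R_{d_A}$, and under $\phi$ these two terms correspond via naturality to $\ket{d_B \cotimes \Id}$ and $\ket{\Id \cotimes d_{A^\vee}}$, which (using $d_{A^\vee} = -d_A^\vee$ from the definition of the dual complex) together assemble the total differential on $\ket{B \cotimes A^\vee}$. The main obstacle I anticipate is sign bookkeeping: the duality functor inserts a factor $(-1)^{ik}$ on a degree-$k$ morphism, the supercommutator differential contributes $(-1)^{|f|}$, the operator $R_f$ carries $(-1)^{|\alpha||f|}$, and the Koszul rule for $\cotimes$ contributes still further signs. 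Verifying that all these factors combine consistently when tracking Proposition \ref{unitAndSaddleProp}(3) through the general homogeneous case---so that one obtains a genuine chain map rather than one correct only up to sign---is the only real subtlety; once this is settled, everything else is formal.
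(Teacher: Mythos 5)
Your strategy is essentially the paper's: prove the bigraded isomorphism via ordinary duality, assemble componentwise, and derive the chain-map property from naturality applied to $d_A$ and $d_B$. Two remarks on how the details play out. First, your formulas $\eta_a\co 1_m\to a\otimes a^\vee$ and $s_a\co a^\vee\otimes a\to 1_n$ do make sense for $a\in\BN{m}{n}$, but they produce the \emph{untraced} isomorphism $\Hom_{\BN{m}{n}}(a,b)\cong q^{(m-n)/2}\Hom_{\BN{m}{m}}(1_m, b\otimes a^\vee)$, which is item (1) of Corollary \ref{homRotation}, not the theorem's target $q^{(m+n)/2}\ket{\Tr(b\cotimes a^\vee)}$ (note the different $q$-shift); passing between them requires one further application of ordinary duality in $\BN{0}{2m}$, which is exactly what the paper's closing ``bend down the top strands'' step does, so you cannot really avoid that reduction. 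Second, the place where you say ``verifying that all these factors combine consistently\ldots{} is the only real subtlety'' is where most of the paper's actual work lives: one must check that $R_f$ corresponds to $\ket{\Id\cotimes f^\vee}$ by matching the sign $(-1)^{k(j-i)}$ coming from the degree of $R_f$ on $\Hom^\bullet$ against the product $(-1)^{jk}(-1)^{ik}$ coming from the Koszul rule for $\cotimes$ and the $(-1)^{ik}$ in the definition of $f^\vee$. Acknowledging that this needs to be checked is good instinct, but the verification itself is the proof; as written, your argument defers it entirely. With those two pieces supplied, your outline coincides with the paper's proof.
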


\begin{proof}
First let us assume $m=0$.  Let $A,B\in\Ch(\BN{0}{2n})$ be arbitrary.  Note that
\begin{eqnarray*}
\Hom^k_{\bn{n}}(A,B)
&\cong & \prod_{j+i=k}\Hom_{\BN{0}{2n}}(A^{-i},B^j)\\
&\cong & \prod_{j+i=k}q^n\Hom_{\bn{0}}(\emptyset, B^j\otimes A^{-i})\\
&\cong & q^n\Hom_{\bn{0}}(\emptyset, \prod_{j+i=k} B^j\otimes A^{-i})\\
&\cong &  q^n\Hom_{\bn{0}}(\emptyset, (B \cotimes A^\vee)^k)
\end{eqnarray*}
This last group is precisely the $k$-th homogeneous piece of $\ket{B\cotimes A^\vee}$, so the theorem holds at least on the level of bigraded abelian groups.  The first isomorphism holds by definition of $\Hom^\bullet$, in the second we appealed to proposition \ref{ordinaryDualityIso}, in the third we used the universal property of direct products, and the last follows from the definitions of the differential graded versions of $(\ )^\vee$ and $\otimes$.

By proposition \ref{homAsFunctor} differential on $\Hom^\bullet_{\BN{0}{2n}}(A,B)$ is $\L{d_B} - \R{d_A}$.  Examining the definitions, we see that the differential on $\ket{B\cotimes A^\vee}$ is $\ket{d_B\cotimes \Id} - \ket{\Id\cotimes d_A^\vee}$.  So the naturality statement will imply that $\Hom^\bullet_{\BN{0}{2n}}(A,B)\cong q^n\ket{B\cotimes A^\vee}$ as a chain complexes.  It therefore remains only  to check $g\cotimes \Id$ corresponds to $R_g$ and $\Id\cotimes f$ corresponds to $L_f$ under the isomorphism of abelian groups $\Hom^\bullet(A,B) \cong q^n\ket{B\cotimes A^\vee}$.

Fix $g\in \Hom^k_{\BN{0}{2n}}(B,C)$, and let $g_j$ denote the restriction of $g$ to $B^j$.  The following square commutes, for all $i,j\in\Z$ by the naturality statement in proposition \ref{ordinaryDualityIso}:
\[
\begin{diagram}
\Hom_{\BN{0}{2n}}(A^i,B^j) & \rTo^{\cong} & q^n \ket{B^j\otimes (A^i)^\vee} \\
\dTo^{L_{g_j}} & & \dTo_{\ket{g_j\otimes \Id}}\\
\Hom_{\BN{0}{2n}}(A^i,C^{j+k}) & \rTo^{\cong} & q^n \ket{C^{j+k}\otimes (A^{i})^\vee}
\end{diagram}.
\]
The left downward arrow is the restriction of $L_g$ to $\Hom(A^i,B^j)$, and the right downward arrow is the restriction of $\ket{g\cotimes \Id}$ to $\ket{B^j\otimes (A^i)^\vee}$.  This shows that $L_g$ and $\ket{g\cotimes \Id}$ correspond to one-another.

Fix $f\in\Hom^k_{\BN{0}{2n}}(A,B)$, and let $f_i$ denote the restriction of $f$ to $A^i$.  Again, the following square commutes for all $i,j\in \Z$ by the naturality statement in proposition \ref{ordinaryDualityIso}:
\[
\begin{diagram}
\Hom_{\BN{0}{2n}}(B^i,C^j) & \rTo^{\cong} & q^n\ket{C^j\otimes (B^i)^\vee}\\
\dTo^{R_{f_{i-k}}}& & \dTo_{\ket{\Id\otimes (f_{i-k})^\vee}}\\
\Hom_{\BN{0}{2n}}(A^{i-k},C^j)  & \rTo^{\cong} & q^n\ket{C^j\otimes (A^{i-k})^\vee}
\end{diagram}.
\]
The leftmost arrow agrees with the restriction of $R_f$ to $\Hom_{\BN{0}{2n}}(B^i,C^j)$ up to a sign.  Examining the definitions, we see that the sign is $(-1)^{k(j-i)}$ since the degree of $f$ is $k$ and the degree of an element of $\Hom_{\BN{0}{2n}}(A^i,B^j)\subset \Hom^\bullet_{\BN{0}{2n}}(A,B)$ is $j-i$.  

The rightmost arrow agrees with the restriction of $\ket{\Id\cotimes f^\vee}$ to $C^j\otimes (B^i)^\vee$ up to a sign.  The sign is $(-1)^{jk}(-1)^{ik}$ (the first factor comes from the Koszul sign rule for $(\Id\cotimes f^\vee)$, and the second comes from the definition of $f^\vee$).  Since this sign is the same as the one in the previous paragraph, it follows that $R_f$ corresponds to $\ket{\Id\cotimes f^\vee}$.  This completes the proof in case $n=0$.

Finally, the result for general $n$ follows from the result for $n=0$: 
\[
\Hom^\bullet(A,B)\cong \Hom^\bullet\Big(\pic{A_turnedDown}\ \  ,\pic{B_turnedDown}\ \ \Big) \cong q^{(n+m)/2}\Hom^\bullet\Big(\emptyset, \pic{B-Adual_trace}\ \ \  ^\Pi\Big),
\]
naturally.  This completes the proof.
\end{proof}

\subsection{Some restatements of the duality isomorphism}

The categories $\Ch(\bn{n})$ form a \emph{rigid monoidal category} \cite{BK01}:

\begin{corollary}\label{homRotation}
$\Hom^\bullet_{\BN{m}{n}}(A,B)$ is naturally isomorphic to any of the following chain complexes:
\begin{enumerate}
\item $q^{(m-n)/2}\Hom^\bullet_{\bn{m}^\Pi}(1_m, B\cotimes A^\vee)$
\item $q^{(n-m)/2}\Hom^\bullet_{\bn{n}^\Pi}(1_n, A^\vee\cotimes B)$
\item $q^{(m-n)/2}\Hom^\bullet_{\bn{m}^\oplus}(B\otimes A^\vee, 1_m)$
\item $q^{(n-m)/2}\Hom^\bullet_{\bn{n}^\oplus}(A^\vee\otimes B,1_n)$\qed
\end{enumerate}
\end{corollary}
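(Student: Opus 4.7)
My plan is to derive each of the four isomorphisms as a direct consequence of Theorem \ref{dualityIsomorphism}, together with two elementary diagrammatic facts: (a) the identity is self-dual, $1_m^\vee = 1_m$ canonically (reflection takes trivial strands to trivial strands), and (b) cyclicity of the Markov trace, $\Tr(X \cotimes Y) \cong \Tr(Y \cotimes X)$ naturally by planar isotopy---and hence via Corollary \ref{isotopyCorollary}, by a canonical homotopy equivalence.

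For isomorphism (1), I apply Theorem \ref{dualityIsomorphism} a second time, now viewing $1_m$ and $B \cotimes A^\vee$ as chain complexes over $\bn{m}^\Pi = \BN{m}{m}^\Pi$. This yields
\[
\Hom^\bullet_{\bn{m}^\Pi}(1_m, B \cotimes A^\vee) \;\cong\; q^{m}\,\ket{\Tr\bigl((B \cotimes A^\vee) \cotimes 1_m^\vee\bigr)} \;\cong\; q^{m}\,\ket{\Tr(B \cotimes A^\vee)},
\]
where the second step uses (a) and the fact that stacking with $1_m$ before tracing has no effect (again a planar isotopy). Comparing with the direct application $\Hom^\bullet_{\BN{m}{n}}(A, B) \cong q^{(m+n)/2}\ket{\Tr(B \cotimes A^\vee)}$ gives (1) after the appropriate $q$-shift.

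For (2), I perform the analogous bending, attaching the $n$ strands below rather than above by applying the theorem to $1_n$ and $A^\vee \cotimes B$ in $\bn{n}^\Pi$. The only new ingredient is fact (b), which identifies $\Tr(A^\vee \cotimes B)$ with $\Tr(B \cotimes A^\vee)$. Isomorphisms (3) and (4) are then obtained by applying the duality functor $(\ )^\vee$ to (1) and (2) respectively: since $(\ )^\vee$ is (essentially) involutive on objects, interchanges $\bn{\cdot}^\oplus$ with $\bn{\cdot}^\Pi$, reverses composition order, and converts $\cotimes$ into $\otimes$ via Proposition \ref{dualityAndPlanarComp}, it carries the two $\Pi$-statements into their $\oplus$-counterparts.

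Naturality in each case is inherited from the naturality assertion in Theorem \ref{dualityIsomorphism}, combined with the evident naturality of the identifications in (a) and (b). The main bookkeeping hazard is tracking the various $q$-degree shifts and Koszul signs across the applications of the theorem; these are mechanical once the correspondence is set up, and in particular no further homological input is required beyond what is already packaged in Theorem \ref{dualityIsomorphism} and Proposition \ref{dualityAndPlanarComp}.
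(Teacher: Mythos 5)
Your overall strategy — deriving each item by a second application of Theorem \ref{dualityIsomorphism} together with the canonical identification $1_m^\vee = 1_m$, the neutrality of stacking with the identity, cyclicity of the Markov trace, and the duality functor $(\ )^\vee$ — is the right kind of argument, and it is essentially what the paper means by labelling the statement a corollary of rigidity. However, there are two places where the argument as sketched does not actually land on the statement you are trying to prove.

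First, and most substantially: applying $(\ )^\vee$ to item (1) does \emph{not} produce item (3) as written. By Proposition \ref{dualityAndPlanarComp}, $(B \cotimes A^\vee)^\vee \cong A \otimes B^\vee$, so dualizing
$\Hom^\bullet_{\bn{m}^\Pi}(1_m, B\cotimes A^\vee)$
gives
$\Hom^\bullet_{\bn{m}^\oplus}\bigl((B\cotimes A^\vee)^\vee, 1_m\bigr) \cong \Hom^\bullet_{\bn{m}^\oplus}(A\otimes B^\vee, 1_m)$,
in which the roles of $A$ and $B$ are the reverse of those in $\Hom^\bullet_{\bn{m}^\oplus}(B\otimes A^\vee, 1_m)$. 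Similarly, dualizing (2) produces $\Hom^\bullet_{\bn{n}^\oplus}(B^\vee\otimes A, 1_n)$, not $\Hom^\bullet_{\bn{n}^\oplus}(A^\vee\otimes B, 1_n)$. This is not a superficial rewriting: a variance check shows that $\Hom^\bullet_{\bn{m}^\oplus}(B\otimes A^\vee, 1_m)$ is covariant in $A$ and contravariant in $B$, i.e.\ it has the \emph{opposite} variance to $\Hom^\bullet_{\BN{m}{n}}(A,B)$, so they cannot be naturally isomorphic as bifunctors. In fact your argument, carried out honestly, proves a slightly different (and almost certainly intended) version of (3) and (4); you should either flag the discrepancy with the printed statement or explain how you reconcile it (for instance, every place the corollary is actually invoked in the paper has $A = B$, where the distinction evaporates).

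Second, a smaller but concrete bookkeeping concern: iterating the theorem gives $\Hom^\bullet_{\BN{m}{n}}(A,B) \cong q^{(m+n)/2}\ket{\Tr(B\cotimes A^\vee)}$ and $\Hom^\bullet_{\bn{m}^\Pi}(1_m, B\cotimes A^\vee) \cong q^{m}\ket{\Tr(B\cotimes A^\vee)}$, which combine to a shift of $q^{(n-m)/2}$ for item (1), not $q^{(m-n)/2}$ (and dually $q^{(m-n)/2}$ for item (2)). Since you flag the $q$-shifts as ``mechanical'' without computing them, you would not have caught this sign. Relatedly, your second application of the theorem is to $B\cotimes A^\vee \in \Ch(\bn{m}^\Pi)$, whereas the theorem is stated for complexes over $\BN{m}{n}$; the extension to the $\Pi$-completed category is routine (the same universal-property argument in the proof applies) but deserves at least a sentence, since the distinction between $\otimes$/$\oplus$ and $\cotimes$/$\Pi$ is exactly the delicate point in this paper.
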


In particular, setting $B=A$ in this corollary, we obtain maps in each of these four $\Hom^\bullet$ spaces corresponding to the identity $A\rightarrow A$.  As special cases we obtain maps $\eta_A$ and $s_A$ which generalize the maps $\eta_a$ and $s_a$ of section \ref{ordinaryDualitySection}.  Inspired by the original definition of the duality isomorphism in that section, we can phrase the duality isomorphism in general in terms of $\eta_A$ and $s_A$.  We choose a slightly different version, since this will be used directly later.

\begin{lemma}\label{epsilonLemma}
Let $A\in\Ch(\BN{m}{n})$ be arbitrary, let $\phi:\End(A)\rightarrow  q^{(m-n)/2}\Hom(\Tr(A\otimes A^\vee), \emptyset)$ be the isomorphism implied by corollary \ref{homRotation} (replacing $A$ by $\pic{A_turnedDown}\ \ $), and put $\e:=\phi(\Id_A)$.  Then
\[
\phi(f) = \e\circ \Tr(f\otimes \Id) = \e\circ \Tr(\Id\otimes f^\vee).
\]
\end{lemma}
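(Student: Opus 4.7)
The plan is to reduce the claim to the naturality of the duality isomorphism $\phi$ from Theorem \ref{dualityIsomorphism}, transferred to the rotated form used in Corollary \ref{homRotation}. Once this naturality is in hand, both identities follow by expressing $f$ as $L_f$ or $R_f$ applied to $\Id_A$.

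First I would make precise the naturality of the rotated $\phi$. In the $\ket{-}$-formulation of Theorem \ref{dualityIsomorphism}, naturality reads
\begin{equation*}
\phi\bigl(L_g \circ R_f(h)\bigr) = \ket{g \cotimes f^\vee} \circ \phi(h)
\end{equation*}
for $f, g, h \in \End(A)$, where $\ket{g \cotimes f^\vee}$ denotes post-composition by $g \cotimes f^\vee$ inside $\ket{A \cotimes A^\vee}$. The rotation of Corollary \ref{homRotation} identifies $q^{(n+m)/2}\ket{A \cotimes A^\vee}$ with $q^{(m-n)/2}\Hom(\Tr(A \otimes A^\vee), \emptyset)$, and exchanges the covariant post-composition by $g \cotimes f^\vee$ with the contravariant pre-composition by the trace-closed morphism $\Tr(g \otimes f^\vee)$. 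So in the rotated picture naturality becomes
\begin{equation*}
\phi\bigl(L_g \circ R_f(h)\bigr) = \phi(h) \circ \Tr(g \otimes f^\vee).
\end{equation*}

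Next I apply this to $h = \Id_A$. Since $|\Id_A| = 0$, the Koszul sign in the definition of $R_f$ is trivial, so $L_f \circ R_{\Id_A}(\Id_A) = f = L_{\Id_A} \circ R_f(\Id_A)$. Substituting these into the rotated naturality with $\e := \phi(\Id_A)$ immediately gives
\begin{equation*}
\phi(f) = \e \circ \Tr(f \otimes \Id) = \e \circ \Tr(\Id \otimes f^\vee),
\end{equation*}
which is exactly the lemma.

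The main obstacle is making the transition from $\ket{-}$-naturality to $\Hom(\Tr(-), \emptyset)$-naturality precise: one must verify that the rotation isomorphism of Corollary \ref{homRotation} converts post-composition by the endomorphism $g \cotimes f^\vee$ of $A \cotimes A^\vee$ into pre-composition by the endomorphism $\Tr(g \otimes f^\vee)$ of $\Tr(A \otimes A^\vee)$. This amounts to checking compatibility of the rotation with planar composition, with the Markov trace, and with the duality functor $(\cdot)^\vee$, together with correct tracking of the shift $q^{(m-n)/2}$ and of the sign conventions from the Koszul rule in $R_f$ and from the differential-graded definition of $(\cdot)^\vee$. These signs cause no trouble in the final step since $\Id_A$ lies in degree zero, so once the rotated naturality is verified the lemma is a one-line application.
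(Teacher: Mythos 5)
Your proposal is correct and follows essentially the same approach as the paper: both invoke naturality of the duality isomorphism $\phi$ (in the $\bra{\ \ }$-formulation the paper writes $\phi\circ L_f = \bra{\Id\otimes f^\vee}\circ\phi$ and $\phi\circ R_f = \bra{f\otimes\Id}\circ\phi$) and then simply evaluate at $\Id_A$. The paper treats the passage from the $\ket{-}$-naturality of Theorem \ref{dualityIsomorphism} to the rotated $\bra{-}$-naturality of Corollary \ref{homRotation} as immediate rather than flagging it as the step requiring care, but the underlying argument is the same.
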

\begin{proof}
This follows immediately from naturality of the isomorphism $\phi$.  That is to say, letting $\bra{\ \ }$ denote the functor $C\mapsto \Hom^\bullet_{\bn{0}}(C,\emptyset)$, $f\mapsto R_f$ (definition \ref{homAsFunctor}), we have $\phi\circ L_f = \bra{\Id\otimes f^\vee}\circ \phi$ and $\phi\circ R_f = \bra{f\otimes \Id}\circ \phi$, where $\bra{\ \  } = \Hom^\bullet(\ \  ,\emptyset)$.  Evaluating on $\Id_A$ gives $\phi(f) = \e\circ \Tr(\Id\otimes f^\vee)$ and $\phi(f) = \e\circ \Tr(f\otimes \Id)$, respectively.
\end{proof}

\section{Spin networks and morphisms}\label{calculusSection}

\subsection{Categorification of $p_n\in\TL_n$}
In \cite{CK12a} Cooper and Krushkal define a chain complex $P_n\in\Ch^{\leq 0}(\bn{n})$ which categorifies the idempotent $p_n\in \TL_n$.  We recall the relevant definitions and results in that paper, taking liberties to suit our needs here.

\begin{definition}
Put
\[
a_i := \overbrace{\bpic{vertStrands}}^{i}\bpic{capArc} \overbrace{\bpic{vertStrands}}^{\mathclap{n-i-2}} 
\hskip.5in
b_j :=\overbrace{\bpic{vertStrands}}^{j}\bpic{cupArc}\overbrace{\bpic{vertStrands}}^{\mathclap{m-j-2}}.
\]
Say $Q\in \Ch(\BN{n}{m})$ \emph{kills turnbacks from above} if $a_i\otimes Q\simeq 0$ for $0\leq i\leq n-2$, similarly $Q$ \emph{kills turnbacks from below} if $Q\otimes b_j\simeq 0$ for $0\leq j
\leq m$.  Say $Q$ \emph{kills turnbacks} if it kills turnbacks from above and below.
\end{definition}

The following differs from Definition 3.1 in \cite{CK12a} in that a universal projector here is concentrated in non-positive homological degrees, rather than non-negative.  Also, we omit the condition on quantum grading here (which is only necessary to have a well-defined notion of Euler characteristic).
\begin{definition}
A \emph{universal projector} is a chain complex $P_n\in\Ch^{\leq 0}(\bn{n})$ such that
\begin{enumerate}
\item The degree zero chain group is $(P_n)^0 = 1_n$, the monoidal identity.  Moreover, $1_n$ does not appear as a summand of any other chain group.
\item $P_n$ kills turnbacks.
\end{enumerate}
\end{definition}

The following appears as Theorem 3.2, Corollary 3.4, and Corollary 3.5 in \cite{CK12a}.
\begin{theorem}[Cooper-Krushkal]\label{CKprojector}  Universal projectors exist and are unique up to homotopy equivalence.  Such a complex is also idempotent up to homotopy: $P_n \otimes P_n \simeq P_n$.
\end{theorem}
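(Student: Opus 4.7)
The plan is to establish existence, uniqueness, and idempotence in sequence. For \emph{existence}, I would construct $P_n$ by an iterative cone construction. Set $P_n^{(0)} := 1_n$ in homological degree zero; given a partial complex $P_n^{(k)} \in \Ch^{\leq 0}(\bn{n})$ concentrated in degrees $\{-k, \ldots, 0\}$ which kills turnbacks above degree $-k$, I would extend it by attaching a new chain group in degree $-k-1$ that bounds the remaining obstruction cycles inside $a_i \otimes P_n^{(k)}$ and $P_n^{(k)} \otimes b_j$. Crucially these obstructions are supported on turnback diagrams, so the new generators can be taken to be $q$-shifts of turnback diagrams---ensuring no fresh copy of $1_n$ is introduced. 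The sequence $P_n^{(k)}$ stabilizes in each fixed homological degree, yielding a complex $P_n \in \Ch^{\leq 0}(\bn{n})$ satisfying the two defining properties of a universal projector.

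For \emph{uniqueness}, given universal projectors $P, P'$ I would inductively construct a chain map $\phi: P \to P'$ that is the identity on $P^0 = 1_n = (P')^0$. At each homological degree, the obstruction to extending $\phi$ one step further lies in a $\Hom^\bullet$ group of the form $\Hom^\bullet(X, P')$ with $X$ a direct sum of turnback diagrams; since $P'$ kills turnbacks, this group is acyclic in the relevant degrees and $\phi$ extends. The mapping cone $\Cone(\phi)$ then lies in $\Ch^{\leq 0}(\bn{n})$, kills turnbacks (because both $P$ and $P'$ do), and contains no copy of $1_n$ (since $\phi$ is the identity on the unique $1_n$ summand of each side). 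A Gaussian elimination argument, invoking the contractibility results of Section \ref{homotopySection}, shows that any such complex is null-homotopic, so $\phi$ is a homotopy equivalence.

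For \emph{idempotence}, observe that $P_n \otimes P_n \in \Ch^{\leq 0}(\bn{n})$, its degree-zero chain group is $1_n \otimes 1_n = 1_n$ with no further $1_n$ summand (a $1_n$ summand in a tensor product requires a $1_n$ summand in each factor, which by hypothesis lives only in degree zero of each), and it kills turnbacks on either side because $P_n$ does. Hence $P_n \otimes P_n$ is itself a universal projector, and uniqueness yields $P_n \otimes P_n \simeq P_n$. The main obstacle throughout will be the contractibility argument in the uniqueness step: ruling out that a non-positively-graded turnback-killing complex without a $1_n$ summand could harbor nontrivial homology. This is precisely where the unbounded nature of the construction forces one into the completion categories $\bn{n}^\oplus$ or $\bn{n}^\Pi$ and requires the bicomplex-contractibility lemmas of the appendix.
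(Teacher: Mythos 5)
The paper does not supply a proof of this theorem---it is quoted verbatim from Cooper--Krushkal \cite{CK12a} (Theorem 3.2 and Corollaries 3.4, 3.5), where existence is established by an \emph{explicit} recursive formula (a categorification of the Frenkel--Khovanov recursion for $p_n$) rather than the abstract step-by-step build you propose. Your uniqueness and idempotence parts are essentially sound. Your uniqueness route---lift $\iota_{P'}$ to a chain map $\phi: P \to P'$ with $\phi \circ \iota_P = \iota_{P'}$ and show $\Cone(\phi)$ is contractible---closely parallels the paper's own Proposition \ref{standardEquivProp}; the claim you need (that $C \in \Ch^{\leq 0}(\bn{n})$ with $\tau(C) < n$ which kills turnbacks is null-homotopic) can be supplied by $C \simeq P_n \otimes C \simeq 0$, using Proposition \ref{iotaProp} for the first equivalence and Proposition \ref{contractibleBicomplexes} for the second (the bicomplex $P_n^\bullet \otimes C^\bullet$ has contractible columns and lies in quadrant III). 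A shorter route, and the one used in \cite{CK12a}, is to write $P = \Cone(N \to 1_n)$ with $\tau(N) < n$, observe $N \otimes P' \simeq 0$ by Lemma \ref{turnbackLemma}, hence $P \otimes P' \simeq P'$ by Gaussian elimination, and symmetrically $P \otimes P' \simeq P$, whence $P \simeq P'$; idempotence is then immediate from Proposition \ref{iotaProp}, with no need to re-verify the projector axioms for $P_n \otimes P_n$.

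The genuine gap is in existence. Your inductive hypothesis---that $P_n^{(k)}$ ``kills turnbacks above degree $-k$''---is not made precise, and it cannot mean $a_i \otimes P_n^{(k)} \simeq 0$, since no bounded complex can be a universal projector: a bounded complex with the stated properties would have graded Euler characteristic equal to $p_n$, yet a bounded complex has Euler characteristic in $\Z[q,q^{-1}]$ (tensored with the appropriate diagram ring) while $p_n$ has genuinely rational coefficients with poles. So the inductive invariant must be a weaker partial-contractibility statement, and you do not say what it is. You then assert, without argument, both that the residual obstruction cycles in $a_i \otimes P_n^{(k)}$ and $P_n^{(k)} \otimes b_j$---over \emph{all} $i$ and $j$ simultaneously---can be bounded by attaching a single new chain group consisting of $q$-shifted turnback diagrams, and that no fresh $1_n$ summand appears in the process. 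These are precisely the nontrivial points; Cooper--Krushkal sidestep them by exhibiting the complex explicitly and then verifying the axioms. As written, your existence paragraph gestures at a plausible method but does not constitute a proof.
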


We will fix once and for all universal projectors $P_n$ for all $n=0,1,\ldots$.

\begin{example}\label{2projector}
For $P_2$ we may choose the chain complex 
\[
\begin{diagram}
P_2 &:= &\bigg( \cdots & \rTo^{\bdifferenceOfDots} & q^5 \bturnback & \rTo^{\bsumOfDots} & q^3 \bturnback & \rTo^{\bdifferenceOfDots} & q \bturnback & \rTo^{\bisaddle} & \underline\bstraightthrough \bigg)
\end{diagram}.
\]
It is a fun and useful exercise to show that $P_2$ is indeed killed by turnbacks.
\end{example}

The definition of spin networks \cite{KL94} extends immediately to the categorification.

\begin{notation}For an integer $n\geq 0$, denote $n$ parallel strands by $\pic{nStrands}$.  Likewise, denote $P_n=:\pic{nProjector}$ and $P_n^\vee =:\pic{nProjectorDual}$.  If $a,b,c$ are nonnegative integers satisfying
\begin{itemize}
\item[(i)] $a+b+c\in 2\Z$ and
\item[(ii)] the sum of any two is larger than the third,
\end{itemize}
then let $\bpic{3network} \ \ \  $ denote $\bpic{3vertex}\ \ \ \ \ \ \in\Ch^{\leq 0}(\BN{a}{b+c})$.  The conditions on $a,b,c$ ensure there is a unique way to connect the projectors in the plane.
\end{notation}

\begin{definition}
A \emph{categorified spin network} is any chain complex over $\BN{n}{m}$ corresponding to a union of labelled graphs $\bpic{3network} \ \ \  $ along similarly colored boundary vertices.
\end{definition}

\subsection{Graphical calculus}
Theorem \ref{dualityIsomorphism} says that we can compute the space of morphisms between categorified spin networks in terms of certain planar compositions of $P_n = \pic{nProjector}$ and $P_n^\vee = \pic{nProjectorDual}$.  We give some rules which can be used to simplify many such compositions, as well as examples illustrating the danger of mistreating them.

\begin{proposition}\label{projSymmetries}
 $P_n$ has the symmetries of a rectangle.  I.e.~$s_x(P_n)\simeq s_y(P_n)\simeq P_n$, where $s_x$ and $s_y$ are the covariant functors induced by reflection of diagrams about the $x$-axis and $y$-axis respectively.
\end{proposition}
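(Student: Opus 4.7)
The plan is to apply the uniqueness clause of Theorem \ref{CKprojector}: it suffices to check that each of $s_x(P_n)$ and $s_y(P_n)$ satisfies the two defining axioms of a universal projector. Both $s_x$ and $s_y$ are covariant $\Z$-linear functors on $\bn{n}$ which extend degree-wise to $\Ch(\bn{n})$, and each is an involution (that is, $s_x \circ s_x = \id$ and $s_y \circ s_y = \id$ on both objects and morphisms of $\Cob^n_n$). In particular, each preserves homotopy equivalence and is an auto-equivalence of $\Ch(\bn{n})$, so $s_x(X) \simeq 0$ if and only if $X \simeq 0$ (and similarly for $s_y$).

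The structural axiom is straightforward. Since $s_x$ and $s_y$ act degree-wise on chain complexes, applying either to $P_n$ produces an object of $\Ch^{\leq 0}(\bn{n})$. The diagram $1_n$ of $n$ parallel vertical strands is manifestly invariant under both reflections, so the degree zero chain group of each of $s_x(P_n)$ and $s_y(P_n)$ is $1_n$. Furthermore, since each reflection is an auto-equivalence that fixes $1_n$, the identity $1_n$ appears as a summand of $(s_x(P_n))^k$ (resp.\ $(s_y(P_n))^k$) if and only if it appears as a summand of $(P_n)^k$, hence only in degree zero.

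The substance lies in verifying that $s_x(P_n)$ and $s_y(P_n)$ kill turnbacks. The key observation is how the reflections interact with vertical stacking. Reflecting about the $x$-axis swaps top and bottom, so there is a natural isomorphism $s_x(A \otimes B) \cong s_x(B) \otimes s_x(A)$, and $s_x$ carries an ``above turnback'' $a_i$ to a ``below turnback'' of the form $b_{i'}$ for an appropriate index $i'$. Consequently
\[
s_x(a_i \otimes s_x(P_n)) \cong s_x(s_x(P_n)) \otimes s_x(a_i) = P_n \otimes b_{i'} \simeq 0,
\]
since $P_n$ kills turnbacks from below, and applying $s_x$ once more yields $a_i \otimes s_x(P_n) \simeq 0$. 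The analogous manipulation handles $s_x(P_n) \otimes b_j$. For $s_y$, reflecting about the $y$-axis preserves top and bottom, so $s_y(A \otimes B) = s_y(A) \otimes s_y(B)$ and $s_y(a_i) = a_{n-i-2}$; a parallel argument applied to $a_i \otimes s_y(P_n)$ reduces it (after applying $s_y$) to $a_{n-i-2} \otimes P_n \simeq 0$.

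The principal---and really only---obstacle is the careful bookkeeping of how $s_x$ interacts with $\otimes$, in particular that it swaps the order of tensor factors and exchanges ``above'' and ``below'' turnbacks, while $s_y$ does neither. These are essentially topological statements about $\Cob^n_n$ requiring no deeper argument. Once unwound, both $s_x(P_n)$ and $s_y(P_n)$ are universal projectors, so by Theorem \ref{CKprojector} each is homotopy equivalent to $P_n$.
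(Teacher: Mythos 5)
Your proposal is correct and takes the same route as the paper: verify that $s_x(P_n)$ and $s_y(P_n)$ satisfy the defining axioms of a universal projector, then invoke uniqueness from Theorem \ref{CKprojector}. You merely spell out the turnback-killing check (via $s_x(A\otimes B)\cong s_x(B)\otimes s_x(A)$, $s_y(A\otimes B)\cong s_y(A)\otimes s_y(B)$, and the action on $a_i$, $b_j$) that the paper leaves as ``easy to see.''
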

\begin{proof}
It is easy to see that $s_x(P_n)$ and $s_y(P_n)$ satisfy the axioms for universal projectors (kill turnbacks and the identity diagram appears exactly once and in bidegree $(0,0)$).  The proposition now follows from uniqueness of universal projectors (Theorem \ref{CKprojector}).
\end{proof}

\begin{corollary}\label{isotopyCorollary}
Let $M$ and $N$ be planar compositions of $P_n$'s for various $n$.  If the underlying diagrams (a union of arcs and rectangles in the disk) for $M$ and $N$ are isotopic rel boundary, then $M$ and $N$ are canonically homotopy equivalent.
\end{corollary}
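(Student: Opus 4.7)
The plan is to reduce the claim to Proposition \ref{projSymmetries} together with the topological nature of the cobordism categories $\Cob^m_n$. Recall that morphisms in $\Cob^m_n$ are cobordisms modulo ambient isotopy rel boundary, so planar gluings of objects of $\Cob^m_n$ (and of morphisms between them) in the disk depend only on the isotopy class of the resulting picture. Via the functorial constructions of Section \ref{completionSection} and the planar composition recipe that follows, this topological invariance lifts to planar compositions of chain complexes over the additive and product completions.

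The one subtlety is that each $P_n$-box carries a designated top and bottom, since $P_n\in\Ch(\bn{n})$ and objects of $\bn{n}$ are tangles in a standard rectangle. An ambient isotopy of the disk rel boundary can carry a $P_n$-box to an orientation in which its top and bottom have been swapped, i.e.\ rotated by $\pi$. I would decompose the ambient isotopy between $M$ and $N$ into a finite sequence of elementary moves, each of which is either (i) a smooth deformation that leaves every box in its original orientation, or (ii) a $\pi$-rotation of a single box. Moves of type (i) give equality of the associated chain complexes on the nose, by the topological invariance of $\Cob^m_n$. Moves of type (ii) transform the relevant $P_n$ into $s_x\circ s_y(P_n)$, which by Proposition \ref{projSymmetries} is canonically homotopy equivalent to $P_n$.

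Composing these elementary equivalences yields the desired homotopy equivalence $M \simeq N$. Canonicity of the composite follows from the fact that the equivalences supplied by Proposition \ref{projSymmetries} are themselves unique up to homotopy, thanks to the uniqueness clause of Theorem \ref{CKprojector}. The main point I expect to require care is verifying both that such a decomposition into elementary moves always exists and that the resulting homotopy class of equivalences is independent of the chosen decomposition; given the triviality of the mapping class group of the disk rel boundary, both should reduce to a short combinatorial verification rather than any genuinely hard piece of topology.
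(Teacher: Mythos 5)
Your proposal takes essentially the same route as the paper: topological invariance of the cobordism categories handles isotopies that preserve box orientations, Proposition \ref{projSymmetries} handles $\pi$-rotations of boxes, and these are composed. One small correction on the source of canonicity: the uniqueness clause of Theorem \ref{CKprojector} says universal projectors are unique up to homotopy equivalence, but it does not by itself say the \emph{equivalence} is unique up to homotopy; for that the paper appeals to Proposition \ref{standardEquivProp} (equivalently Theorem \ref{canonicalEquivalencesThm}), which pins down the standard equivalence $P\simeq Q$ by the condition $\psi\circ\iota_P=\iota_Q$ and proves it is unique up to homotopy.
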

\begin{proof}
From proposition \ref{projSymmetries}, we have $r(P_n) = s_x(s_y(P_n))\simeq P_n$.  Graphically, this is $\pic{nProjector_rotated}\ \simeq \pic{nProjector}$.  Now, proposition \ref{standardEquivProp} implies that there is in fact a canonical equivalence, up to homotopy.  This fact together with isotopy invariance of the underlying categories $\BN{m}{n}$, gives the result.
\end{proof}

\begin{definition}
Let $c\in \Cob^m_n\subset \BN{m}{n}$ be any object.  Define the \emph{through-degree} of $c$, denoted $\tau(c)$, to be the minimal $k$ such that $c = a\otimes b$, with $a\in\BN{m}{k}$, $b\in\BN{k}{n}$.   Define the through-degree of a chain complex $C\in\Ch(\BN{m}{n}^\Pi)$ or $\Ch(\BN{m}{n}^\oplus)$ to be $\tau(C) = \max\{\tau(c)\}$, where $c$ ranges over all direct summands (or factors) of all chain groups of $C$.
\end{definition}

It is clear that $\tau(A^\vee) = \tau(A)$ and $\tau(A\otimes B),\tau(A\cotimes B)\leq \min\{\tau(A),\tau(B)\}$.  All of the results in the remainder of this section are consequences of
\begin{enumerate}
\item The projectors $P_n$ kill turnbacks.
\item $P_n$ can be written as a mapping cone $P_n = \Cone(N\buildrel f\over \rightarrow 1_n)$ with $\tau(N)<n$.
\item The following lemma:
\end{enumerate}

\begin{lemma}[Turnback killing lemma]\label{turnbackLemma}
Suppose $Q\in\Ch(\BN{n}{m})$ kills turnbacks from above and $N\in \Ch(\BN{k}{n})$ has through degree $\tau(N)<n$.  If $Q$ is bounded below or $N$ is bounded above, then $N\otimes Q\simeq 0$.  The same is true is we allow $N\in\Ch(\BN{k}{n}^\oplus)$.
\end{lemma}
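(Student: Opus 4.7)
The plan is to view $N\cotimes Q$ (or $N\otimes Q$, as appropriate) as the total complex of a bicomplex whose $(i,j)$-entry is $N^i\otimes Q^j$, and to verify that each of its $Q$-columns $N^i\otimes Q$ is nullhomotopic.  The conclusion will then follow from a general bicomplex-contractibility result proved in the appendix (Section~\ref{homotopySection}).

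First I would establish that each column $N^i\otimes Q$ is nullhomotopic.  Decompose the chain group $N^i\in\BN{k}{n}^\oplus$ as a (possibly countable) direct sum of indecomposable diagrams $c\in\Cob^k_n$.  Since $\tau(N)<n$, each summand satisfies $\tau(c)<n$, so $c$ admits a factorization $c = a\otimes b$ with $a\in\BN{k}{j}$ and $b\in\BN{j}{n}$ for some $j<n$.  Because $b$ has strictly fewer endpoints on top than on bottom, it must contain at least one innermost cup among its bottom endpoints; stripping this off gives $b = b'\otimes a_p$ for some $p$ and some $b'\in\BN{j}{n-2}$, where $a_p$ is one of the standard turnbacks.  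Hence
\[
c\otimes Q \;=\; a\otimes b'\otimes (a_p\otimes Q) \;\simeq\; 0,
\]
because $Q$ kills turnbacks from above.  Since contractibility is preserved under arbitrary direct sums, assembling these gives $N^i\otimes Q\simeq 0$ with a canonical contracting homotopy inherited from those of the individual $a_p\otimes Q$.

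Next I would assemble these column contractions into a contracting homotopy for the total complex.  Naively summing the column homotopies need not converge, so one instead builds the homotopy inductively.  The boundedness hypothesis is precisely what makes the induction go through: if $N$ is bounded above, the induction begins at the right-most non-zero column and proceeds leftward using that only finitely many columns contribute to any fixed diagonal above a given degree; if instead $Q$ is bounded below, then each column is bounded below and a standard column-by-column chain homotopy argument applies.  In either case the precise statement needed is exactly the bicomplex-contractibility lemma of the appendix, which I would invoke to conclude $N\otimes Q\simeq 0$.

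The main obstacle is the last step: transferring pointwise contractibility of columns to contractibility of the total complex for potentially unbounded complexes over categories which admit only countable direct sums or products requires careful bookkeeping, and is the reason for Section~\ref{homotopySection}.  Once that bicomplex machinery is in hand, the diagrammatic observation that any $b\in\BN{j}{n}$ with $j<n$ contains an innermost bottom cup, together with the hypothesis $a_p\otimes Q\simeq 0$, makes the remainder of the argument essentially immediate.
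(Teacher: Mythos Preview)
Your proposal is correct and follows essentially the same approach as the paper: view $N\otimes Q$ as $\Tot^\oplus$ of the bicomplex $(N^i\otimes Q^j)_{i,j}$, observe that each column $N^i\otimes Q$ is contractible because every diagram summand of $N^i$ contains a turnback, and then invoke Proposition~\ref{contractibleBicomplexes} under the boundedness hypothesis. Your extraction of an explicit innermost cup from each indecomposable summand is a detail the paper leaves implicit; conversely, your informal description of how the boundedness hypothesis enters is a bit loose (the precise point is that the bicomplex avoids quadrant IV after a shift), but since you correctly defer to the appendix lemma this does not matter.
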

\begin{proof}
Refer to the definitions and results of section \ref{bicomplexesSection}.  Up to a shift, we may assume that $Q$ is non-negatively graded or $N$ is non-positively graded in homological degree.  The chain complex $N\otimes Q$ is the total complex (of type I, i.e.\ $\Tot^\oplus$) of the bicomplex $N^\bullet\otimes Q^\bullet := ((N^i\otimes Q^j)_{i,j\in\Z}, d_N\otimes \Id, \Id\otimes d_Q)$.  By assumption each $N^i$ is a direct sum $\bigoplus_x a_x$ where $a_x$ is a diagram with $\tau(a_x)<n$.  The columns of $N^\bullet \otimes Q^\bullet$ are the complexes $N^i\otimes Q\cong \bigoplus_x a_x \otimes Q$, which are contractible since $Q$ kills turnbacks.  If either $Q$ is non-negatively graded or $N$ is non-positively graded (in homological degree), then the bicomplex $N^\bullet\otimes Q^\bullet$ is concentrated in quadrants I, II, III, and proposition \ref{contractibleBicomplexes} implies that $N\otimes Q\simeq 0$.
\end{proof}

The boundedness conditions on $Q$ or $N$ in the above proposition cannot be relaxed, as the following example indicates.
\begin{example}\label{bi-infiniteExample}
Let $Q = P_2$, $N = q\inv \underline{\turnback}{\buildrel\phi_-\over \rightarrow} q^{-3} \turnback {\buildrel\phi_+\over\rightarrow} \dots$, where the differential alternates between a difference and a sum of dots, and the underlined term lies in homological degree zero.  In other words $N$ is the tail of $\pic{2projectorDual}$, so that $\pic{2projectorDual} = t\Cone(\straightthrough\rightarrow N)$.  Then $Q$ kills turnbacks and $\tau(N)=0$.  However, $Q\otimes N$ is not contractible.
Indeed, $\pic{2projectorDual} = t\Cone(\straightthrough\rightarrow N)$ implies that $N\simeq \Cone(\pic{2projectorDual}\rightarrow \straightthrough)$, and so
\[
N\otimes Q \simeq \Cone(\pic{2projector_black-white}^\oplus\rightarrow \pic{2projector}) \simeq \Cone(\pic{2projectorDual}\buildrel\phi\over\rightarrow \pic{2projector}).
\]
Here we are already using the result of proposition \ref{iotaProp}.  This chain complex cannot possibly be contractible, since the map $\phi$ is supported in a single homological degree.  It turns out that this map is the identity on the degree zero chain groups.  Contracting this isomorphism, we obtain a 2-periodic chain complex:
\[
Q\otimes N \simeq
\Big( \begin{diagram}
\cdots & \rTo^{\bdifferenceOfDots} & q \bturnback & \rTo^{\bsumOfDots} & q\inv \underline{\bturnback} & \rTo^{\bdifferenceOfDots} & \cdots
\end{diagram} \Big)
\]
This chain complex is very far from being contractible.  On the contrary, it ``measures the difference'' between $\pic{2projector}$ and $\pic{2projectorDual}$, and is interesting in its own right.
\end{example}

\begin{proposition}\label{iotaProp}
If $Q\in\Ch(\BN{n}{m}^\oplus)$ kills turnbacks from above, then $P_n\otimes Q\simeq Q$.  In fact,  $\iota\otimes \Id_Q: Q = 1_n\otimes Q\rightarrow P_n\otimes Q$ is a homotopy equivalence, and there is an inverse equivalence $\psi:P_n\otimes Q\rightarrow Q$ such that $\psi\circ (\iota\otimes \Id_Q)=\Id_Q$.  We have a similar fact if $Q$ kills turnbacks from below.
\end{proposition}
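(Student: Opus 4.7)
The plan is to reduce the statement to the turnback killing lemma (Lemma \ref{turnbackLemma}) via the mapping cone description of $P_n$ that is already implicit in the setup. Since $P_n\in\Ch^{\leq 0}(\bn{n})$ with $1_n$ occurring only in homological degree zero, one can write $P_n\cong\Cone(N\buildrel f\over\rightarrow 1_n)$, where $N\in\Ch^{\leq -1}(\bn{n})$ is bounded above and has through-degree $\tau(N)<n$. The inclusion $\iota\co 1_n\rightarrow P_n$ is then simply the canonical inclusion of the $1_n$-summand into the cone.

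First I would apply the turnback killing lemma to the pair $(N,Q)$. The hypothesis is exactly what is needed: $Q$ kills turnbacks from above, $\tau(N)<n$, and $N$ is bounded above. The lemma yields $N\otimes Q\simeq 0$; pick a contracting homotopy $h\co N\otimes Q\rightarrow N\otimes Q$ with $dh+hd=\Id_{N\otimes Q}$.

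Next I would use that tensoring on the right with $Q$ preserves mapping cones, giving a natural isomorphism
\[
P_n\otimes Q\;\cong\;\Cone\bigl(N\otimes Q\xrightarrow{f\otimes\Id_Q}Q\bigr),
\]
under which $\iota\otimes\Id_Q\co Q\rightarrow P_n\otimes Q$ corresponds to the canonical inclusion of $Q$ into the cone. Using the contracting homotopy $h$, define $\psi\co\Cone(f\otimes\Id_Q)\rightarrow Q$ by $\psi(n,q)=q-(f\otimes\Id_Q)(h(n))$ on the chain groups $N\otimes Q\oplus Q$. A direct check (of the kind familiar from the standard proof that a cone on a nullhomotopic map splits) shows that $\psi$ is a chain map, that $\psi\circ(\iota\otimes\Id_Q)=\Id_Q$ on the nose, and that $(\iota\otimes\Id_Q)\circ\psi$ is chain homotopic to the identity via a homotopy built from $h$.

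The only real work is the turnback killing step, which is already done. The rest is bookkeeping with mapping cones, and the one mild pitfall is signs in the cone differential and in the formula for $\psi$—these have to be chosen so that $\psi\circ(\iota\otimes\Id_Q)=\Id_Q$ holds strictly, as claimed. Finally, the parallel statement for $Q$ killing turnbacks from below follows by applying the symmetric argument after reflecting everything left-to-right, using Proposition \ref{projSymmetries}.
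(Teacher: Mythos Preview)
Your argument is correct and follows essentially the same route as the paper: decompose $P_n$ as a cone $(N\rightarrow 1_n)$ with $\tau(N)<n$, apply the turnback killing lemma to get $N\otimes Q\simeq 0$, and then contract this summand inside $P_n\otimes Q\cong(N\otimes Q\rightarrow Q)$. The paper packages the last step as an appeal to Proposition \ref{Gauss} (Gaussian elimination), whose explicit retraction $r=\matrix{\Id & -{}_Ad_B\circ h}$ is exactly your formula for $\psi$; so you have simply unwound that proposition by hand rather than citing it.
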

\begin{proof}
Suppose $Q\in \Ch(\BN{n}{m})$ kills turnbacks from above, and write $P_n = (N\rightarrow 1_n)$, where $N\in\Ch^{\leq 0}(\bn{n})$ has through degree $\tau(N)< n$.  Then $N\otimes Q\simeq 0$ by Lemma \ref{turnbackLemma}.  Proposition \ref{Gauss} now implies $P_n\otimes Q \cong (N\otimes Q\rightarrow Q) \simeq Q$.  In fact, the proof of proposition \ref{Gauss} actually implies that $\iota\otimes \Id_Q :Q\rightarrow  P_n\otimes Q$ is the inclusion of a strong deformation retract.  In particular there is an inverse $\phi$ such that $\phi\circ(\iota\otimes \Id_Q)=\Id_Q$.  This proves the proposition.
\end{proof}

The next three results constitute the most important relations in our graphical calculus.

\begin{proposition}[Absorption rule] \label{absorptionRule}
Fix non-negative integers $x,y,z$, and put $a:=x+y+z$.  Then we have
\[
\pic{littleWhite-bigWhite}\ \ \ \simeq \bpic{aProjector} \simeq \pic{littleBlack-bigWhite}\ \ \ \ ^\Pi \hskip.5in \text{and}\hskip.5in \pic{littleBlack-bigBlack}\ \ \ \simeq \bpic{aProjector-black} \simeq \pic{littleWhite-bigBlack}\ \ \ \ ^\oplus,
\]
and vertical reflections of these.
\end{proposition}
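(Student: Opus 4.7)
The plan is to reduce each equivalence to the turnback killing lemma (Lemma \ref{turnbackLemma}) applied to a mapping-cone decomposition of the small projector. First I would observe that, since $P_x\in\Ch^{\leq 0}(\bn{x})$ has $1_x$ as its unique degree-zero summand, we may write $P_x\simeq \Cone(N\to 1_x)$ for some $N\in\Ch^{\leq 0}(\bn{x})$ with $\tau(N)<x$. Dualising (using Proposition \ref{dualityAndPlanarComp}) then produces, up to an overall shift, $P_x^\vee\simeq \Cone(1_x\to N^\vee)$, with $N^\vee$ bounded below and $\tau(N^\vee)=\tau(N)<x$.

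For the equivalence $\pic{littleWhite-bigWhite}\ \ \ \simeq \bpic{aProjector}$, I would unpack the picture as $(P_x \otimes \Id_{y+z})\otimes P_a$ (the precise horizontal placement is irrelevant by Corollary \ref{isotopyCorollary}), substitute the cone decomposition, and obtain $\Cone\bigl((N\otimes \Id_{y+z})\otimes P_a \to P_a\bigr)$. Since $\tau(N\otimes \Id_{y+z})=\tau(N)+y+z<a$, and since $P_a$ kills turnbacks from above while $N$ is bounded above, Lemma \ref{turnbackLemma} contracts the source of the cone, leaving $P_a$.

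For the equivalence $\pic{littleBlack-bigWhite}\ \ \ \ ^\Pi \simeq \bpic{aProjector}$, the same manipulation produces $\Cone\bigl(P_a\to (N^\vee\otimes \Id_{y+z})\cotimes P_a\bigr)$. Each column of the underlying bicomplex has the form $c\otimes P_a$ with $\tau(c\otimes \Id_{y+z})<a$, and is therefore contractible by the turnback property of $P_a$. The key geometric input is that $N^\vee$ is bounded below and $P_a$ is bounded above, so the bicomplex is supported in a half-plane on which $\Tot^\Pi$ of a contractible-column bicomplex is itself contractible---the $\Pi$-analogue of Proposition \ref{contractibleBicomplexes} worked out in the appendix. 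Hence the cone again reduces to $P_a$.

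The two remaining equivalences (large projector black) follow from the first two by applying the duality functor $(\ )^\vee$ and invoking Proposition \ref{dualityAndPlanarComp}, which swaps $\otimes$ with $\cotimes$ and the superscripts $\oplus$ with $\Pi$; the vertical reflections are automatic from the $s_y$-symmetry of universal projectors (Proposition \ref{projSymmetries}). The main obstacle I anticipate is the $\Pi$-case: correctly identifying the half-plane supporting the bicomplex and pairing it with the right variant of the contractible-column lemma, since the $\oplus$ versus $\Pi$ conventions (and the direction of the mapping cone produced by dualising) are easy to confuse. Everything else amounts to bookkeeping with through-degrees and boundedness.
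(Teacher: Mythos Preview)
Your argument is correct and is essentially the paper's proof with the intermediate step (Proposition~\ref{iotaProp}) unpacked.  The only difference is in packaging: the paper bends the outer $x$ and $z$ strands of $P_a$ downward so that the large projector becomes a complex $Q\in\Ch(\BN{y}{x+a+z})$ which kills turnbacks from above, and then invokes Proposition~\ref{iotaProp} directly to get $P_y\otimes Q\simeq Q$; you instead stay in $\bn{a}$ and observe that the horizontal juxtaposition $N\sqcup 1_{y+z}$ has through-degree strictly less than $a$, which feeds into Lemma~\ref{turnbackLemma} just the same.  These are the same argument viewed through two isotopic pictures.

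Your treatment of the $\Pi$ case is more explicit than the paper's ``similarly'': you correctly note that with $N^\vee$ bounded below and $P_a$ bounded above the bicomplex lives in quadrant~IV, so part~(2) of Proposition~\ref{contractibleBicomplexes} applies to $\Tot^\Pi$.  This is exactly what the paper has in mind.  Two cosmetic points: your small projector is labelled $P_x$ while the paper's strand-bending naturally singles out $P_y$ (irrelevant, since $x,y,z$ are symmetric in the statement), and the ``vertical reflections'' come from the covariant reflection functor $s_x$ of Proposition~\ref{projSymmetries} rather than $s_y$.
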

\begin{proof}
Regard $P_a$ as an object $Q\in \Ch(\BN{y}{x+a+z})$ by bending the $x$ top left-most strands to the left and down and the $z$ top right-most strands to the right and down.  Then $Q$ kills turnbacks from above.  By proposition \ref{iotaProp} we have $P_y\otimes Q\simeq Q$, which is the first equivalence in the statement above.  The remaining equivalences are proven similarly.
\end{proof}

\begin{proposition}[Commuting rule]\label{commutingRule}
Let $A\in \Ch(\bn{n}^\oplus)$ (respectively $A\in \Ch(\bn{n}^\Pi)$) be arbitrary.  We have
\[
\pic{nProjectorDual-A}^\oplus\simeq \pic{A-nProjectorDual}^\oplus, \hskip.5in \text{respectively}\hskip.5in  \pic{nProjector-A} ^\Pi \simeq \pic{A-nProjector}^\Pi .
\]
\end{proposition}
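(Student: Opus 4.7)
The plan is to prove $P_n^\vee \otimes A \simeq A \otimes P_n^\vee$ for $A \in \Ch(\bn{n}^\oplus)$ by showing both sides are homotopy equivalent to the intermediate complex $P_n^\vee \otimes A \otimes P_n^\vee$; the $\Pi$ version with $P_n$ follows by the analogous (dual) argument.

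First I would exploit the cone structure of the universal projector: writing $P_n = (N \to 1_n)$ with $N \in \Ch^{\leq -1}(\bn{n})$ and $\tau(N) < n$ (as in the proof of proposition \ref{iotaProp}), and dualizing, $P_n^\vee$ fits into a distinguished triangle with $1_n$ and $N^\vee$, where $N^\vee \in \Ch^{\geq 1}$ is bounded below with $\tau(N^\vee) < n$. Since $P_n$ kills turnbacks on both sides by proposition \ref{projSymmetries}, applying $(\ )^\vee$ via proposition \ref{dualityAndPlanarComp} shows that $P_n^\vee$ does too (the reflection of a cap is a cup). Tensoring the triangle into $P_n^\vee \otimes A \otimes P_n^\vee$ on the right and on the left respectively produces distinguished triangles
\[
P_n^\vee \otimes A \otimes N^\vee \to P_n^\vee \otimes A \otimes P_n^\vee \to P_n^\vee \otimes A
\]
and
\[
N^\vee \otimes A \otimes P_n^\vee \to P_n^\vee \otimes A \otimes P_n^\vee \to A \otimes P_n^\vee,
\]
so the proof reduces to showing both left-hand terms are contractible.

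To establish the contractibility claims, I would view each triple tensor as the total complex of a bicomplex indexed by the homological degree of $A$ on one axis. For each fixed degree $j$, the corresponding column $P_n^\vee \otimes A^j \otimes N^\vee \simeq P_n^\vee \otimes (A^j \otimes N^\vee)$ has its second factor bounded below (since $A^j$ sits in a single degree and $N^\vee$ is) and of through degree less than $n$. The turnback killing lemma \ref{turnbackLemma}, in the symmetric ``from below'' variant, then applies because $P_n^\vee$ is bounded below, making the column contractible. The appendix's results on total complexes of bicomplexes with contractible columns then give $P_n^\vee \otimes A \otimes N^\vee \simeq 0$; the other cone is dispatched similarly using the original ``from above'' version of the lemma (applied to $(N^\vee \otimes A^j) \otimes P_n^\vee$).

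The main technical obstacle is that $A$ is allowed to be unbounded, so the turnback killing lemma cannot be applied directly to $A \otimes N^\vee$ or $N^\vee \otimes A$: neither of these is bounded below when $A$ is not. The bicomplex decomposition above circumvents this by reducing to the single-degree pieces $A^j$, at the cost of requiring careful bookkeeping of the $\oplus$ versus direct-product conventions and of the bicomplex's quadrant structure (the $N^\vee$-axis is bounded below, the $A$-axis is unbounded) to match the hypotheses of the appendix's contractibility results.
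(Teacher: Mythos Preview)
Your proof is correct and follows the same high-level strategy as the paper: both arguments pass through the intermediate complex $P_n^\vee \otimes A \otimes P_n^\vee$ and show it is equivalent to each of $P_n^\vee \otimes A$ and $A \otimes P_n^\vee$. The paper packages this as ``$A \otimes P_n^\vee$ and $P_n^\vee\otimes A$ kill turnbacks, so one may absorb an extra $P_n^\vee$ via (the obvious $P_n^\vee$-variant of) Proposition~\ref{iotaProp},'' but unwinding that absorption is exactly your cone argument.

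Your bicomplex detour, however, is unnecessary. The hypothesis of Lemma~\ref{turnbackLemma} is a disjunction: it suffices that the turnback-killing factor be bounded below, with no boundedness required of the low-through-degree factor. Since $P_n^\vee$ is bounded below and $N^\vee\otimes A$, $A\otimes N^\vee$ already have through degree $<n$ (regardless of whether $A$ is bounded), Lemma~\ref{turnbackLemma} and its reflection apply directly to give $(N^\vee\otimes A)\otimes P_n^\vee\simeq 0$ and $P_n^\vee\otimes(A\otimes N^\vee)\simeq 0$ in one step. This is exactly how the paper handles unbounded $A$: for a turnback $a$, one has $(a\otimes A)\otimes P_n^\vee\simeq 0$ immediately because $P_n^\vee$ is bounded below, so $A\otimes P_n^\vee$ kills turnbacks without ever splitting $A$ into degrees.
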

\begin{proof}
Fix $A\in\Ch(\bn{n})$, and let $a\in\BN{n-2}{n}$ be a turnback diagram.  Then $a\otimes A$ has through degree $<n$.  We have $a\otimes (A\otimes P_n^\vee)\cong  (a\otimes A)\otimes P_n^\vee\simeq 0$ by proposition \ref{iotaProp}.  So $A\otimes P_n^\vee$ kills turnbacks from above.  $A\otimes P_n^\vee$ clearly kills turnbacks from below (since $P_n^\vee$ does) so $A\otimes P_n^\vee$ kills turnbacks.  Similarly, $P_n^\vee \otimes A$ kills turnbacks.  Two applications of proposition \ref{iotaProp} give
\[
A\otimes P_n^\vee \simeq P_n^\vee\otimes (A\otimes P_n^\vee)\cong (P_n^\vee\otimes A)\otimes P_n^\vee \simeq P_n^\vee\otimes A.
\]
This proves the first statement.  The second follows by an application of $(\ )^\vee$.
\end{proof}

\begin{proposition}[Semi-orthogonality rule]\label{semiRule}
Suppose $i<j$ and let $A\in\Ch(\BN{i}{j}^\oplus)$ (respectively $A\in\Ch(\BN{j}{i}^\Pi)$) be arbitrary.  We have
\[
\pic{iProjector-A-jProjectorDual}^\oplus\simeq 0 \hskip.5in \text{respectively }\hskip.5in \pic{jProjector-A-iProjectorDual}^\Pi\simeq 0 .
\]
\end{proposition}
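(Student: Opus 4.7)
The plan is to prove the first equivalence by showing the stronger statement that $A\otimes P_j^\vee$ is already contractible, so that composing with $P_i$ on top is automatic, and then to deduce the $\Pi$-version by applying the duality functor $(\ )^\vee$.

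First observe that any diagram $a\in\BN{i}{j}$ with $i<j$ admits the factorization $a=\Id_i\otimes a$ through $i$ strands, whence $\tau(a)\leq \min\{i,j\}=i<j$.  It follows that $\tau(A)<j$ for every $A\in\Ch(\BN{i}{j}^\oplus)$, which puts us in position to invoke the turnback killing lemma (Lemma \ref{turnbackLemma}) with $N=A$ and $Q=P_j^\vee$.  The two required hypotheses are readily checked: $P_j^\vee\in\Ch^{\geq 0}(\bn{j})$ is bounded below since $P_j\in\Ch^{\leq 0}(\bn{j})$, and $P_j^\vee$ kills turnbacks from above because, for any turnback $a_k\in\BN{j-2}{j}$, proposition \ref{dualityAndPlanarComp} yields
\[
(a_k\otimes P_j^\vee)^\vee \cong P_j\otimes a_k^\vee,
\]
whose right-hand side is contractible ($a_k^\vee$ being a turnback attached from below to $P_j$, which kills such); applying $(\ )^\vee$ once more gives $a_k\otimes P_j^\vee\simeq 0$.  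Lemma \ref{turnbackLemma} then delivers $A\otimes P_j^\vee\simeq 0$, and tensoring on the left with $P_i$ completes the first case.

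For the $\Pi$-version, take $A\in\Ch(\BN{j}{i}^\Pi)$; then $A^\vee\in\Ch(\BN{i}{j}^\oplus)$, and proposition \ref{dualityAndPlanarComp} supplies a natural isomorphism
\[
\bigl(P_j\otimes A\otimes P_i^\vee\bigr)^\vee \cong P_i\otimes A^\vee\otimes P_j^\vee.
\]
The right-hand side is contractible by the case already handled, and applying $(\ )^\vee$ once more delivers the second equivalence.

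The main obstacle, though mild, is attention to the direction of boundedness required by Lemma \ref{turnbackLemma}: $P_j^\vee$ is bounded below, which matches the hypothesis on $Q$ there, so the argument goes through.  Had one tried instead to exploit the top factor $P_i$ to kill $A$ directly, the argument would collapse, since $\tau(A)$ can equal $i$ and $P_i$ does not in general kill complexes of through-degree $i$.  The asymmetry between $i$ and $j$ must therefore be exploited through the lower projector $P_j^\vee$.
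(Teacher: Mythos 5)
Your argument is correct and matches the paper's approach: both hinge on Lemma \ref{turnbackLemma} applied with $Q = P_j^\vee$ (bounded below and killing turnbacks from above, the latter following from $P_j$ killing turnbacks via duality) against a complex of through degree less than $j$, followed by an application of $(\ )^\vee$ for the $\Pi$-version. The only cosmetic difference is that you bound $\tau(A)\leq i<j$ directly and then tensor with $P_i$ afterward, whereas the paper feeds the composite $P_i\otimes A$ (with $\tau(P_i\otimes A)\leq i<j$) into the lemma; your variant has the mildly stronger consequence that $A\otimes P_j^\vee\simeq 0$ already, so the outer $P_i$ factor is a passenger rather than an active ingredient.
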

\begin{proof}
Let $i,j,A$ be as in the hypotheses.  Note that $A\otimes P_i$ has through degree $\tau(A\otimes P_i)\leq i<j$.  Since $P_j^\vee$ is bounded below, lemma \ref{turnbackLemma} applies, and we have $P_j^\vee \otimes A \otimes P_i\simeq 0$.  This is the first statement.  The second statement follows from an application of $(\ \ )^\vee$.
\end{proof}

One must be careful not to mistreat the diagrammatic simplifications of this section.  The following example is useful to keep in mind:

\begin{example}
For this example specialize to $\a=0$.  That is to say, two dots on the same component of a surface gives the zero map.  Let $A\in \Ch(\bn{2})$ be the chain complex $(\dots\rightarrow t\inv q^3\turnback \rightarrow q\turnback \rightarrow tq\inv \turnback\rightarrow \dots)$ in which each map is a dot on the bottom strand.  It is not too hard to show that this chain complex kills turnbacks from below but not from above.  Therefore
\[
A\otimes P_2\simeq A \not \simeq P_2\otimes A,
\]
since the latter chain complex kills turnbacks from above as well as below, and the former does not.  This gives a counter-example to the statements ``$P_n\otimes A\simeq A\otimes P_n$ for $A\in\Ch(\bn{n})$'' and ``$P_n\otimes A\simeq 0$ for $\tau(A)<n$.''  In particular, the hypotheses of the commuting and semi-orthogonality rules (propositions \ref{commutingRule} and \ref{semiRule}, respectively) are necessary.
\end{example}

We conclude this section with an observation:

\begin{corollary}
Suppose $Q\in \Ch(\bn{n})$ is semi-infinite in homological degree.  If $Q$ kills turnbacks from above then $Q$ kills turnbacks from below, and vice versa.
\end{corollary}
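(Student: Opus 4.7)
The plan is to establish the implication ``kills from above $\Rightarrow$ kills from below'', since the reverse direction follows at once by applying the vertical reflection functor $s_x$ of Proposition \ref{projSymmetries} to $Q$ (which preserves homological boundedness and exchanges the two turnback conditions). Assume $Q \in \Ch(\bn{n})$ kills turnbacks from above. By Proposition \ref{iotaProp} we have $P_n \otimes Q \simeq Q$, with no boundedness on $Q$ required---the mapping-cone presentation $P_n = \Cone(N \to 1_n)$ already supplies a bounded-above $N$ for the turnback killing lemma. Consequently, for each cup $b_j$,
\[
Q \otimes b_j \;\simeq\; P_n \otimes (Q \otimes b_j),
\]
and it suffices to show the right-hand side is contractible.

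For this I would run a bicomplex argument on $P_n^\bullet \otimes (Q \otimes b_j)^\bullet$. Each chain group $(Q \otimes b_j)^\ell$ is a direct sum of diagrams in $\BN{n}{n-2}$, each of through-degree at most $n-2$ and hence factoring through a cup at the top. Since $P_n$ kills turnbacks from below by Proposition \ref{projSymmetries}, every column $P_n \otimes (Q \otimes b_j)^\ell$ of the bicomplex is therefore contractible.

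It remains to invoke the bicomplex lemma (Proposition \ref{contractibleBicomplexes}), which requires a support condition on the bicomplex. If $Q$ is bounded above, then $Q \otimes b_j$ is bounded above as well, and together with $P_n$ the bicomplex lies in the third quadrant (after a shift), and the lemma gives contractibility. If instead $Q$ is bounded below, I would replace the absorption step using $P_n^\vee$: applying $(\ )^\vee$ to the ``kills turnbacks from below'' version of Proposition \ref{iotaProp} applied to $Q^\vee$ yields $P_n^\vee \otimes Q \simeq Q$, and now both $P_n^\vee$ and $Q \otimes b_j$ are bounded below so the bicomplex lies in the first quadrant; the same column-contractibility argument together with the bicomplex lemma finishes the case.

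The main obstacle is precisely this choice of absorbing projector. One must pick $P_n$ or $P_n^\vee$ according to the direction of boundedness of $Q$ so as to confine the bicomplex to a single quadrant, which is the support condition the bicomplex lemma requires; neither projector works uniformly for both semi-directions. Example \ref{bi-infiniteExample} shows that dropping the semi-infinite hypothesis outright allows a bi-infinite bicomplex for which the total complex is genuinely non-contractible, so this bookkeeping really is essential.
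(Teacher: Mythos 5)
Your proof is correct, but it takes a genuinely different route from the paper's. Where you directly establish contractibility of $P_n\otimes(Q\otimes b_j)$ by re-running a bicomplex argument (columns $P_n\otimes(Q\otimes b_j)^\ell$ contractible because $(Q\otimes b_j)^\ell$ has through-degree $<n$ and $P_n$ kills turnbacks from below), the paper instead reaches for the already-proved commuting rule (Proposition \ref{commutingRule}). The paper's chain is $Q\simeq P_n\otimes Q\cong P_n\cotimes Q\simeq Q\cotimes P_n$, where the middle identification $\otimes=\cotimes$ holds because both factors lie in $\Ch^{\leq 0}$, and then $Q\cotimes P_n$ visibly kills turnbacks from below since $P_n$ does; this is shorter and more modular. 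Your argument is self-contained and does not rely on Proposition \ref{commutingRule}, but it is in effect re-deriving the special case of the commuting rule that is needed, so it carries the bookkeeping cost that the commuting rule was designed to hide. Both approaches require exactly the same case split on the direction of boundedness and the same substitution $P_n\leftrightarrow P_n^\vee$ (equivalently $\otimes\leftrightarrow\cotimes$), and both ultimately rest on Proposition \ref{contractibleBicomplexes} and Proposition \ref{iotaProp}. Two small slips that do not affect validity: the fact that $P_n$ kills turnbacks from below is part of the definition of a universal projector, not a consequence of Proposition \ref{projSymmetries}; and what you call columns of the bicomplex are what the paper's conventions call rows, but Proposition \ref{contractibleBicomplexes} is symmetric in this choice.
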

\begin{proof}
Up to a shift, we may assume $Q\in \Ch^{\leq 0}(\bn{n})$ or $Q\in \Ch^{\geq 0}(\bn{n})$.  So, suppose $Q\in \Ch^{\leq 0}(\bn{n})$ kills turnbacks from above.  Then
\begin{equation}\label{symmetryEq}
Q\simeq P_n\otimes Q \cong P_n\cotimes Q \simeq Q\cotimes P_n.
\end{equation}
The first equivalence follows from proposition \ref{iotaProp}, the middle isomorphism follows since $\otimes $ and $\cotimes $ agree on the category of non-positively graded chain complexes, and the third equivalence follows from proposition \ref{commutingRule}.  The complex on the RHS of equation \ref{symmetryEq} obviously kills turnbacks from below as well as from above.  Similar arguments take care of the remaining cases.
\end{proof}

\begin{remark}
This result allows us to simplify the expression for the universal projector constructed in \cite{CK12a}.  The result of the Cooper-Krushkal \cite{CK12a} construction (up to the penultimate step) is a chain complex of the form
\[
P_n' =
\Big(\dots \rightarrow \pic{FKpic_1}\ \ \ \ \ \rightarrow \pic{FKpic_1}\ \ \ \ \ \rightarrow \dots \rightarrow \pic{FKpic_n-1}\ \ \ \ \ \rightarrow  \pic{FKpic_n-1}\ \ \ \ \ \rightarrow \dots \rightarrow  \pic{FKpic_1}\ \ \ \ \ \rightarrow \pic{FKpic_0}\ \ \ \ \ \Big).
\]
The differential should be regarded as a sum of arrows pointing strictly to the right; we did not draw the arrows of length $>1$, nor the degree shifts.  Cooper and Krushkal prove that this chain complex kills turnbacks from below, hence $P_n:=s_x(P_n')\otimes P_n'$ is a universal projector, where $s_x(\ )$ is the vertical reflection.  Once we know that a universal projector exists, proposition \ref{commutingRule} implies that we already had one at the previous step: $P_n'$ is bounded above in homological degree and kills turnbacks from below, so it kills turnbacks from above.  This fact would be hard to prove without a priori knowledge that $P_n$ exists.
\end{remark}

\subsection{Some computations}\label{computatonsSection}
In this section we will implicitly be working with categories $\BN{m}{n}^\Pi$, and so we will omit the symbol $\Pi$ from our planar compositions.  Our work up to this point says that we can compute the chain complex $\Hom^\bullet_{\BN{m}{n}}(M,N)$ of morphisms between planar compositions of $P_n$ in the following way:

\begin{enumerate}
\item Reflect $M$ and replace all the white boxes with black boxes to obtain $M^\vee$
\item Glue up all of the loose ends of $N$ with the corresponding loose ends of $M^\vee$
\item Simplify using the following relations:
\begin{enumerate}
\item Diagrams which are isotopic rel boundary give canonically homotopy equivalent chain complexes (corollary \ref{isotopyCorollary}).
\item Absorption rules: $\pic{littleWhite-bigWhite}\ \ \ \simeq \bpic{aProjector} \simeq \pic{littleBlack-bigWhite}\ \ \ \ $ (proposition \ref{absorptionRule}).
\item Commuting rule: $\pic{nProjector-A}\simeq\pic{A-nProjector}$ for every chain complex $A$ over $\bn{n}$ (proposition \ref{commutingRule}).
\item Semi-orthogonality rule: if $i<j$ then $\pic{jProjector-A-iProjectorDual}\simeq 0 $ for every chain complex $A$ over $\BN{i}{j}$ (proposition \ref{semiRule}).
\end{enumerate}
\item Take $\Hom^\bullet(\emptyset, -)$ of the result.
\end{enumerate}
The planar composition of complexes over $\BN{m}{n}^\Pi$ is spherical, in the sense that it doesn't matter up natural how we connect up the loose ends in (2) above, as long as the strands are paired correctly.

\begin{proposition} \label{endRings}
We have
\begin{enumerate}
\item $\End^\bullet(\pic{aProjector} ) \simeq q^a \ket{\pic{aProjector_trace}\ \  }$.
\item $\End^\bullet(\pic{3network}\ ) \simeq q^{(a+b+c)/2}\ket{\ \pic{thetaNet}\ \ }$.
\item $\End^\bullet(\ \pic{iNet_i}\ \ )\simeq q^{(a+b+c+d)/2} \ket{ \ \pic{pillNet}\ \ \ \ }$.
\end{enumerate}
\end{proposition}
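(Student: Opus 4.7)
The plan is to apply Theorem \ref{dualityIsomorphism} to each of the three endomorphism complexes, converting $\End^\bullet(M)$ into $q^{(m+n)/2}\ket{\Tr(M\cotimes M^\vee)}$, where $(m+n)/2$ equals half the total number of boundary endpoints of $M$. In each case it then remains to simplify the closed planar diagram $\Tr(M\cotimes M^\vee)$ into the single-layer spin network appearing on the right-hand side, using the graphical calculus developed in Section \ref{calculusSection}. The $q$-shifts in the proposition match automatically: for (1) the prefactor is $q^{(a+a)/2}=q^{a}$; for (2) the spin network has $a+b+c$ boundary points in total; for (3) it has $a+b+c+d$.

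For part (1), the network is $M=P_a$ and $M^\vee=P_a^\vee$, and the trace stacks $P_a$ directly over $P_a^\vee$ before closing off all strands. The absorption rule (Proposition \ref{absorptionRule}), in the form $P_a\otimes P_a^\vee\simeq P_a$ in $\BN{a}{a}^\Pi$, collapses the stack to a single projector, yielding $\Tr(P_a)$, which is the colored circle $\pic{aProjector_trace}$. For parts (2) and (3), reflecting $M$ vertically and gluing it to $M$ along matching boundary points doubles each external edge into a stack $P_k\cotimes P_k^\vee$ (with $2k$ parallel external strands that get closed off by $\Tr$), while any internal edge of colour $i$ becomes a stack $P_i\cotimes P_i$ between the two trivalent vertices. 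In every case the absorption rule collapses each such stack to a single projector of the appropriate colour, and Corollary \ref{isotopyCorollary} straightens the resulting planar diagram into the theta graph of part (2) or the pill-shaped graph of part (3).

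The main obstacle is the diagrammatic bookkeeping needed to justify each application of Proposition \ref{absorptionRule}: after reflecting and gluing one must bend the relevant strands so that each stacked pair $P_k\cotimes P_k^\vee$ (or $P_i\cotimes P_i$) really appears as a composition in the appropriate category $\BN{k}{k}^\Pi$, rather than as part of a larger tangled subdiagram. Corollary \ref{isotopyCorollary} legitimizes these planar rearrangements, and one should check that the order in which the absorptions are carried out does not matter, up to the canonical homotopy equivalences provided by that corollary. Once this is set up correctly, each of the three parts is a short diagrammatic calculation, with the trace producing precisely the closed networks $\pic{aProjector_trace}$, $\pic{thetaNet}$, and $\pic{pillNet}$ shown on the right-hand side.
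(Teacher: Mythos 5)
Your high-level strategy matches the paper's (duality isomorphism plus graphical calculus), and parts (1) and (2) go through as you describe, since there every projector in $\Tr(M\cotimes M^\vee)$ comes in an adjacent white--black pair on a single edge and Proposition \ref{absorptionRule} finishes the job. For part (3), however, your geometric description is off in a way that hides a genuine gap. After gluing $M=\pic{iNet_i}$ to $M^\vee$ and closing with $\Tr$, the internal $i$-edge of $M$ and the internal $i$-edge of $M^\vee$ become two \emph{distinct} edges of the resulting closed planar graph, joining different pairs of the four trivalent vertices---they do not coincide, so your claim that the internal edge ``becomes a stack $P_i\cotimes P_i$ between the two trivalent vertices'' is incorrect (and it would be $P_i\cotimes P_i^\vee$ even if it were a stack).

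Because of this, after absorbing the four external stacks via Proposition \ref{absorptionRule}, the diagram still has a white $P_i$ on one of those internal edges and a black $P_i^\vee$ on the other, and passing from that configuration to the all-white pill network $\pic{pillNet}$ is not an isotopy of planar diagrams. This is exactly where the paper's proof invokes the commuting rule (Proposition \ref{commutingRule})---a homotopy equivalence that is not an isomorphism---which your proposal never uses; Corollary \ref{isotopyCorollary} applies only to planar compositions of white projectors related by isotopy rel boundary and cannot take its place. Note also that the paper proves only part (3), remarking that (1) and (2) follow as degenerate cases of the same computation, whereas you argue them separately.
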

\begin{proof}
Let us prove (3) only.  The other parts are special cases of this one.  Observe that in the category $\bn{0}^\Pi$ we have
\[
\pic{iNetEnd1}\hskip .8 in \simeq \  \pic{iNetEnd2}\hskip .6in \simeq \ \pic{iNetEnd3}\hskip .6in \simeq\  \pic{iNetEnd4}\hskip .6in \simeq\  \pic{iNetEnd5}\hskip .6in,
\]
In the first equivalence we repeatedly used that $\pic{aProjector_white-black}^\Pi \simeq \pic{aProjector}$ (proposition \ref{absorptionRule}), and in the third we the commuting rule  (proposition \ref{commutingRule}).  An application of $\ket{\ \ }$, together with the duality isomorphism, now gives the result.
\end{proof}
Note that the duality functor automatically gives an isomorphism $\End^\bullet_{\bn{n}}(A)\cong \End_{\bn{n}}^\bullet(A^\vee)$ for any $A\in\Ch(\bn{n})$.  For example $\End^\bullet(\pic{aProjectorDual})\simeq q^a\ket{\pic{aProjector_trace}\ \ }$.  This illustrates a preference for chain complexes which are bounded above, and is our reason for choosing our conventions the way we have: we want the colored unknots give the endomorphism rings of the colored arcs.

\begin{proposition}\label{iNetProp}
If $j<i$ then $\Hom^\bullet\Big(\ \pic{iNet_j}\ \ ,\ \pic{iNet_i}\ \ \Big)\simeq 0$.
\end{proposition}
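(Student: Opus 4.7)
The strategy is to mimic the proof of Proposition \ref{endRings}(3) and finish with the semi-orthogonality rule. By Theorem \ref{dualityIsomorphism},
\[
\Hom^\bullet\Bigl(\pic{iNet_j}\ ,\ \pic{iNet_i}\Bigr)\ \cong\ q^{(a+b+c+d)/2}\,\ket{D},
\]
where $D$ is the closed planar diagram obtained by tracing $\pic{iNet_i}\cotimes(\pic{iNet_j})^\vee$. It therefore suffices to show that $D\simeq 0$ in $\Ch(\bn{0}^\Pi)$.

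To simplify $D$ I would follow the exact pattern of equivalences $\pic{iNetEnd1}\simeq\cdots\simeq\pic{iNetEnd5}$ used in the proof of Proposition \ref{endRings}(3): repeatedly invoke the absorption rule (Proposition \ref{absorptionRule}) to swallow the eight boundary projectors $P_a,P_b,P_c,P_d$ and their mirror copies into their larger neighbors $P_i$ and $P_j^\vee$, and use the commuting rule (Proposition \ref{commutingRule}) together with isotopy invariance (Corollary \ref{isotopyCorollary}) to slide projectors past each other and rearrange the picture. The outcome is the analog of the pill diagram, but with chords carrying $P_i$ and $P_j^\vee$ rather than two copies of $P_i$.

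At this stage the simplified diagram exhibits, up to planar isotopy inside the trace, a sub-composition of the form $P_i\otimes A\otimes P_j^\vee$ for some $A\in\Ch(\BN{i}{j}^\Pi)$ built from the absorbed sides of the pill. Since $j<i$, the semi-orthogonality rule (Proposition \ref{semiRule}) immediately gives $P_i\otimes A\otimes P_j^\vee\simeq 0$, whence $D\simeq 0$ by naturality of the planar composition. Applying $\ket{\,\cdot\,}$ then finishes the proof.

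The main obstacle is the bookkeeping in the simplification step: one has to verify that after absorption and commuting, the picture genuinely exposes a composition of the shape $P_i\otimes A\otimes P_j^\vee$, with the strand counts $i$ and $j$ lining up correctly on either side of $A$ so that Proposition \ref{semiRule} really applies. Because every step is controlled by the graphical calculus and by isotopy invariance, this amounts to careful picture-drawing rather than any analytic difficulty.
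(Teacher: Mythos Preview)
Your approach is correct and matches the paper's: apply the duality isomorphism to get a closed trace diagram, then kill it with the semi-orthogonality rule. The paper's proof is more direct, however: it applies Proposition~\ref{semiRule} immediately to the raw trace picture, with no preliminary absorption or commuting. The point is that semi-orthogonality allows an \emph{arbitrary} complex $A$ between $P_i$ and $P_j^\vee$, so one only needs to observe that the trace diagram is a planar composition containing a sub-rectangle of the form $P_i\cotimes A\cotimes P_j^\vee$ (with all eight remaining projectors and the trace arcs packed into $A$ and the surrounding closure), which is immediate from the geometry once the picture is drawn.

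One small correction to your description of the absorption step: the boundary projectors $P_a,P_b,P_c,P_d$ are not fully nested in $P_i$ (some of their strands go to each other, not to $P_i$), so they cannot be absorbed into $P_i$. What actually happens in the simplification of Proposition~\ref{endRings}(3) is that each black $P_a^\vee$ absorbs into the adjacent white $P_a$ of the same size (the $x=z=0$ case of Proposition~\ref{absorptionRule}). This does not affect the validity of your argument, but it is worth getting right if you choose to spell out the intermediate steps.
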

\begin{proof}
Proposition \ref{semiRule} says that $ \ \pic{iNetHomPic}\ \ \ \ \ \ \ \ \  \ \ \ \ \   \simeq 0$ if $j:=x'+z'<x+z=:i$.  Theorem \ref{dualityIsomorphism} now gives the result.
\end{proof}

\begin{remark}
The complexes here are not bounded, so there is no reason to expect that $\Hom^\bullet(A,B)\simeq \Hom^\bullet(B,A)$ in general.  Nonetheless it can be shown that $\Hom^\bullet\Big(\ \pic{iNet_j}\ \ ,\ \pic{iNet_i}\ \ \Big)\simeq 0$ more generally if $j\neq i$.
\end{remark}
\section{The sheet algebra and colored unknots}\label{sheetAlgSection}
As an application of the $\Hom^\bullet$ space calculations thus far, we are able to study the differential graded algebra $\End(\pic{nProjector})\simeq q^n\ket{\pic{nProjector_trace}\ }$, which we refer to as the \emph{sheet algebra}.  Our main results in this section are that this algebra is (graded) commututive up to homotopy, and that the actions of the unknots $q^n\ket{\pic{nProjector_trace}\ }$ on $\pic{nProjector}$, $\pic{nProjectorDual}$, coincide with the actions induced by the saddle maps.

In this section, we fix $n\in\N$ and put $P=P_n$.  For simplicity of notation, we write $\End = \End^\bullet_{\bn{n}}$ and $\Hom = \Hom^\bullet_{\bn{n}}$.  Recall that our simplifications up to this point imply that $\End(P)\simeq q^n \ket{\Tr(P)}$, where $\Tr:\bn{n}\rightarrow \bn{0}$ is the functor obtained by connecting the top and bottom points of all the diagrams (the Markov trace) and $\ket{\ }:\Ch(\bn{0})\rightarrow \Z-\mathbf{mod}$ is the functor $A^\bullet \mapsto \Hom^\bullet_{\bn{0}}(\emptyset,A^\bullet)$.

\subsection{Self-equivalences of $P$}
We make some simple observations about $\End(P)$ and $\ket{\Tr(P)}$.
\begin{lemma}\label{degreeZeroLemma}
$H^{(0,0)}(\End(P))\cong \Z$.
\end{lemma}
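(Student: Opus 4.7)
My plan is to use the duality isomorphism to transport the computation to $\ket{\Tr(P)}$, and then read off the relevant cohomology from the top of the complex.

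By the simplifications in section \ref{computatonsSection}, $\End(P) \simeq q^n \ket{\Tr(P)}$, so
\[
H^{(0,0)}(\End(P)) \cong H^{(0,-n)}(\ket{\Tr(P)}).
\]
Since $P \in \Ch^{\leq 0}(\bn{n})$ with $P^0 = 1_n$, we have $\ket{\Tr(P)}^0 = \ket{\Tr(1_n)} = \ket{U^n}$, where $U^n$ is the $n$-fold disjoint unknot obtained by closing the identity diagram. The tautological TQFT (Definition \ref{tautTQFT}) identifies $\ket{U^n}$ with $V^{\otimes n}$, where $V = \ket{U} = \Z\{v_-, v_+\}$ and $v_\pm$ are the (undotted/dotted) caps of $q$-degree $\pm 1$. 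In particular the $q$-degree $-n$ piece of $\ket{U^n}$ is the one-dimensional module $\Z \cdot v_-^{\otimes n}$, given by $n$ disjoint undotted disks.

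Thus $H^{(0,-n)}(\ket{\Tr(P)})$ is a quotient of $\Z$, and it suffices to show the incoming differential $d^{-1}$ has zero image at this bidegree, equivalently $\ket{\Tr(P)}^{-1}_{q=-n} = 0$. The key combinatorial fact I would invoke is: for any Temperley-Lieb diagram $a \in \Cob^n_n$ with $a \neq 1_n$, the closure $\Tr(a)$ has at most $n-1$ circles. Indeed, around each circle one alternates between inside arcs of $a$ and outside trace arcs, so each circle uses at least two of the $2n$ endpoints, with equality only when the circle consists of a single through strand paired with its trace arc. Having all $n$ circles attain this minimum forces every inside arc of $a$ to be a through strand, i.e. $a = 1_n$.

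Consequently, for $a \neq 1_n$, $\ket{\Tr(a)} = \ket{U^{c(a)}}$ with $c(a) \leq n-1$, so its $q$-degrees are bounded below by $-(n-1) > -n$. Combined with the observation that summands of $P^{-1}$ in the Cooper-Krushkal model carry non-negative $q$-shifts (as witnessed by Example~\ref{2projector}), this forces $\ket{\Tr(P)}^{-1}_{q=-n} = 0$. Hence $H^{(0,-n)}(\ket{\Tr(P)}) \cong \Z$, and the generator is the image of $[\Id_P]$ under the duality isomorphism. The main obstacle is the $q$-shift bookkeeping on summands of $P$; homotopy invariance of $H^{(0,0)}(\End(P))$ allows us to verify this in any convenient model.
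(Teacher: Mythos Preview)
Your proposal is correct and is essentially a detailed unpacking of the paper's one-line proof, which simply cites ``the Cooper-Krushkal construction and the computation $\End(P)\simeq q^n \ket{\Tr(P)}$.'' The only minor caveat is that Example~\ref{2projector} witnesses the non-negative $q$-shift condition only for $n=2$; your final remark invoking homotopy invariance to pass to the Cooper-Krushkal model (where this holds by construction for all $n$) is exactly the point the paper is gesturing at.
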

\begin{proof}
This can easily be seen from the Cooper-Krushkal construction and the computation $\End(P)\simeq q^n \ket{\Tr(P)}$.
\end{proof}
It follows from this proposition that only self equivalences of $P$ up to homotopy are $\pm\Id_P$.  Indeed the same computation gives a slight improvement on a theorem of Cooper-Krushkal.
\begin{theorem}\label{canonicalEquivalencesThm}
Any two universal projectors are homotopy equivalent via a unique equivalence (up to homotopy and a scalar).\qed
\end{theorem}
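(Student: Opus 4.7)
The strategy is to reduce the uniqueness statement to the computation of the degree $(0,0)$ cohomology of $\End(P)$ that was just established in Lemma \ref{degreeZeroLemma}. Existence is already handed to us by Theorem \ref{CKprojector}, so the only genuinely new content is the statement about uniqueness of the equivalence up to homotopy and a scalar.

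Concretely, let $P$ and $P'$ be two universal projectors, and choose (by Theorem \ref{CKprojector}) a homotopy equivalence $\phi \co P \to P'$ with homotopy inverse $\bar\phi \co P' \to P$. Now suppose $\psi \co P \to P'$ is any other degree $(0,0)$ chain map. The composite $\bar\phi\circ \psi$ is a degree $(0,0)$ chain endomorphism of $P$, and hence represents a class in $H^{(0,0)}(\End(P))$. By Lemma \ref{degreeZeroLemma} this group is isomorphic to $\Z$, and from the Cooper--Krushkal construction the generator is the class of $\Id_P$ (this is essentially the assertion that $1_n$ appears in $P$ only in bidegree $(0,0)$, with multiplicity one). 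Therefore $\bar\phi\circ \psi \simeq c\cdot \Id_P$ for some $c\in\Z$, and applying $\phi$ on the left gives $\psi \simeq c\cdot \phi$.

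If in addition $\psi$ is itself a homotopy equivalence, then the scalar $c$ must be invertible in $H^{(0,0)}(\End(P))\cong \Z$, which forces $c=\pm 1$. This shows that any two homotopy equivalences $P\to P'$ agree up to sign and homotopy, which is the content of the theorem. The same argument of course shows that any self-equivalence of $P$ is $\pm \Id_P$ up to homotopy.

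I do not anticipate a real obstacle: the proof is a short corollary of Lemma \ref{degreeZeroLemma} together with the observation that the generator of $H^{(0,0)}(\End(P))$ can be taken to be $[\Id_P]$. The only point requiring a moment of care is to argue that $[\Id_P]$ is indeed a generator (and not, say, twice a generator); this follows because $\Id_P$ restricts to the identity on the single copy of $1_n$ sitting in bidegree $(0,0)$ of $P$, and any chain homotopy of degree $(-1,0)$ out of $1_n$ lands in a chain group in which $1_n$ does not appear (by the definition of a universal projector), so the restriction-to-$1_n$ map $H^{(0,0)}(\End(P))\to \Z$ sends $[\Id_P]$ to $1$.
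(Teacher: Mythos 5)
Your proof is correct and follows the same route as the paper: the paper also treats this theorem as an immediate consequence of Lemma \ref{degreeZeroLemma}, composing a given equivalence $\psi$ with the inverse of a fixed one to land in $H^{(0,0)}(\End(P))\cong\Z$, whose units are $\pm 1$. One small simplification you could make: the final paragraph arguing that $[\Id_P]$ generates $H^{(0,0)}(\End(P))$ is unnecessary, since $[\Id_P]$ is the multiplicative unit of a ring whose underlying group is $\Z$, and the unit of any such ring is automatically a generator.
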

We can fix the scalar once and for all using the following proposition.  Let $1_n\in\bn{n}$ be the monoidal identity (i.e.\ $n$ parallel strands).  If $P$ is a univeral projector, let $\iota_P:1_n\rightarrow P$ be the inclusion of the degree zero chain group (a chain map since $P$ is non-positively graded).
\begin{definition}\label{standardEquivDef}
Let $P$ and $Q$ are universal projectors.  Call an equivalence $\psi:P\simeq Q$ \emph{standard} if $\psi\circ\iota_P = \iota_Q$.
\end{definition}
The following proposition says that standard equivalences exist and are unique up to homotopy.

\begin{proposition}\label{standardEquivProp}
Let $P$ and $Q$ be universal projectors.  There is a map $\psi:P\rightarrow Q$ uniquely defined, up to homotopy, by $\psi\circ \iota_P = \iota_Q$.  This map is a homotopy equivalence.
\end{proposition}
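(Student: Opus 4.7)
The plan is to construct $\psi$ by induction on homological degree, using a vanishing result for $\Hom^\bullet_{\bn{n}}(P^{-i}, Q)$ to eliminate the obstruction at each step; uniqueness and the equivalence property will then follow from variants of the same argument. Since $P^0 = Q^0 = 1_n$, I begin by setting $\psi^0 = \Id_{1_n}$, which already ensures $\psi \circ \iota_P = \iota_Q$. Extending to $\psi^{-i}$ for $i \geq 1$ amounts to lifting the map $\psi^{-i+1}\circ d_P^{-i}\co P^{-i}\to Q^{-i+1}$ through $d_Q$, and such a lift exists precisely when the corresponding cocycle in $\Hom^\bullet_{\bn{n}}(P^{-i}, Q)$ is a coboundary.

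The central technical input I will establish is the vanishing
\[
\Hom^\bullet_{\bn{n}}(a, Q) \simeq 0
\]
for any diagram $a\in\bn{n}$ with $\tau(a)<n$. To prove this I apply Theorem \ref{dualityIsomorphism} to rewrite the left side as $q^n\ket{\Tr(Q\cotimes a^\vee)}$; because $a^\vee$ is concentrated in a single bidegree, $\cotimes$ reduces to $\otimes$ and there is no sum-versus-product ambiguity. Invoking cyclicity of the Markov trace (i.e.\ the canonical isotopy in the annulus that slides $Q$ around the closure) rewrites this as $q^n\ket{\Tr(a^\vee\otimes Q)}$, and Lemma \ref{turnbackLemma} then applies directly: $Q$ kills turnbacks from above, $\tau(a^\vee)=\tau(a)<n$, and $a^\vee$ is bounded, so $a^\vee\otimes Q\simeq 0$. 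Since $\Tr$ and $\ket{\cdot}$ preserve contractibility, the whole expression vanishes. Summing over the finitely many summands yields $\Hom^\bullet_{\bn{n}}(P^{-i},Q)\simeq 0$ for every $i\geq 1$, because axiom (1) of a universal projector forbids $1_n$ from occurring as a summand of $P^{-i}$ and hence forces each summand to have through-degree less than $n$.

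With this vanishing in hand, both existence and uniqueness reduce to routine homological algebra. At the $i$-th inductive step of the existence argument, the candidate $\psi^{-i+1}\circ d_P^{-i}$ is automatically $d_Q$-closed (from the chain-map property through degree $-i+1$ and $d_P^2=0$), so the vanishing supplies a primitive $\psi^{-i}$. For uniqueness, if $\psi,\psi'$ are two such chain maps then $f:=\psi-\psi'$ is a chain map with $f^0=0$, and I construct a null-homotopy $h$ inductively by setting $h^0=0$ and, at each subsequent step, solving $d_Q h^{-i}=f^{-i}-h^{-i+1}\,d_P^{-i}$; a direct calculation shows the right-hand side is again a cocycle in the contractible complex $\Hom^\bullet(P^{-i},Q)$, hence exact.

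Finally, to see that $\psi$ is a homotopy equivalence I run the existence argument again with the roles of $P$ and $Q$ swapped, producing $\varphi\co Q\to P$ with $\varphi\circ\iota_Q=\iota_P$. Then $(\varphi\circ\psi)\circ\iota_P=\varphi\circ\iota_Q=\iota_P=\Id_P\circ\iota_P$, so uniqueness forces $\varphi\circ\psi\simeq\Id_P$, and symmetrically $\psi\circ\varphi\simeq\Id_Q$. The main obstacle is the key vanishing: one must thread the duality isomorphism through the cyclicity of the Markov trace and the Turnback Killing Lemma, and the boundedness of $a$ is essential here, as Example \ref{bi-infiniteExample} shows that the turnback-killing conclusion fails without such hypotheses.
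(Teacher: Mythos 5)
Your proof is correct and takes a genuinely different route from the paper's.  Where you build $\psi$ degree by degree, reducing existence and uniqueness to the acyclicity of each complex $\Hom^\bullet(P^{-i},Q)$, the paper instead packages the whole ``negative part'' of $P$ at once: writing $P=\Cone(N\to 1_n)$ with $\tau(N)<n$, it observes that $R_{\iota_P}\colon\Hom^\bullet(P,Q)\to\Hom^\bullet(1_n,Q)$ is surjective on the nose (an explicit right inverse $\sigma\mapsto\hat\sigma$ is furnished by the idempotent absorption $P\otimes Q\simeq Q$ from Proposition \ref{iotaProp}), and that its kernel is $\Hom^\bullet(N,Q)\simeq 0$, so $R_{\iota_P}$ is a quasi-isomorphism and $\psi:=\hat\iota_Q$ is the desired unique map.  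The two arguments rest on essentially the same vanishing input — Theorem \ref{dualityIsomorphism} plus Lemma \ref{turnbackLemma}, with through-degree control coming from axiom (1) of a universal projector — but your version applies the turnback lemma one chain group at a time (where the boundedness hypothesis is automatic since each $a^\vee$ sits in a single degree), whereas the paper's application is to the unbounded complex $N^\vee$ (which works because $N^\vee$ is bounded below, using the $\Tot^\Pi$ form of Proposition \ref{contractibleBicomplexes}).  Your construction is more elementary and in particular does not invoke Proposition \ref{iotaProp}; the paper's is more structural and delivers slightly more, namely that $R_{\iota_P}$ is a quasi-isomorphism of the full morphism complexes, not just the existence and uniqueness of the single map $\psi$.
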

\begin{proof}
For $\sigma\in \Hom^\bullet(1_n,Q)$, define $\hat \sigma \in \Hom^\bullet(P,Q)$ to be the composition $P{\buildrel  \Id\otimes \sigma \over \longrightarrow} P\otimes Q \buildrel\phi\over \rightarrow Q$, where $\phi:P\otimes Q\simeq Q$ is a homotopy equivalence such that $\phi\circ(\iota_P\otimes \Id_Q)=\Id_Q$ (implied by proposition \ref{iotaProp}).  Then
\[
\hat\sigma \circ \iota_P = \phi\circ (\iota_P\otimes\sigma) = \phi\circ(\iota_P\otimes \Id_Q) \circ \sigma = \sigma.
\]
This implies the chain map $R_{\iota_P}:\Hom^\bullet(P,Q)\rightarrow \Hom^\bullet(1_n,Q)$ is surjective.  Writing $P$ as a mapping cone $P = (N\rightarrow 1_n)$ with $\tau(N)<n$, we therefore have a short exact sequence of chain complexes
\[
\Hom^\bullet(N,Q)\buildrel{R_\pi}\over \longrightarrow \Hom^\bullet (P,Q)\buildrel R_{\iota_P}\over\longrightarrow \Hom^\bullet (1_n,Q),
\]
where $\pi: P\rightarrow N$ is the projection and $R_\pi$, $R_{\iota_P}$ are as in definition \ref{homAsFunctor}.
But $\Hom^\bullet(N,Q)\simeq q^n \ket{Q\cotimes N^\vee}\simeq 0$ by Lemma \ref{turnbackLemma}.  Examining the long exact sequence in homology gives that $R_{\iota_P}$ is an isomorphism in homology (even more is true: $f\mapsto f\circ \iota_P$ and $\sigma\mapsto \hat\sigma$ are homotopy inverses, but we don't need this here).  In particular, $\psi:=\hat\iota_Q$ is the unique map $P\rightarrow Q$ up to homotopy such that $\psi\circ\iota_P = \iota_Q$.

To see that this $\psi$ is a homotopy equivalence, let $\psi':Q\rightarrow P$ be a chain map such that $\psi\circ \iota_Q = \iota_P$, which exists by the above.  Then $(\psi'\circ\psi) \circ \iota_P = \psi'\circ \iota_Q = \iota_P$, which implies $\psi'\circ \psi\simeq \Id_P$ by the uniqueness statement above.  Similarly, $\psi\circ \psi'\simeq\Id_Q$, so $\psi$ and $\psi'$ are homotopy inverses.  This completes the proof.
\end{proof}

\subsection{Modules over the sheet algebra}
\begin{definition}
The \emph{sheet algebra} is the dg algebra $\End(P) = \End(\pic{nProjector})$.  By a \emph{sheet module} we will mean a homotopy $\End(P)$-module, i.e.~a chain complex $M$ and a degree zero chain map $\pi:\End(P)\rightarrow \End(M)$ such that $\pi(f\circ g) \simeq \pi(f)\circ \pi(g)$ as maps $\End(P)^{\otimes 2}\rightarrow \End(M)$.  We require that $\pi(\Id_P)\simeq \Id_M$.  We call the map $\pi$ the \emph{action} or the \emph{representation}.
\end{definition}
Any chain complex $Q\in\BN{n}{m}$ which kills turnbacks from above naturally has the structure of a sheet module.  Indeed, let $\iota:1_n\rightarrow P$ be the inclusion of the degree zero chain group.  Proposition \ref{iotaProp} says that $\iota\otimes \Id_Q:Q \rightarrow P\otimes Q$ is a homotopy equivalence, and there is an inverse $\phi:P\otimes Q\rightarrow Q$ such that $\phi\circ(\iota\otimes \Id_Q) = \Id_Q$.
\begin{definition}\label{sheetModuleConstruction}
Retain notation as above.  Let $\pi_Q:\End(P)\rightarrow \End(Q)$ be the map $\pi_Q(f) = \phi\circ (f\otimes\Id_Q)\circ (\iota\otimes \Id_Q)$.
\end{definition}
It is easy to check that $\pi_Q$ makes $Q$ into a sheet module.
\begin{example}
Fix $k\geq 0$, and regard $P_{n+k}$ as a chain complex over $\BN{n}{n+2k}$ (bending the $k$ rightmost top strands down).  Then these chain complexes kills turnbacks from above, and so we have actions of $\End(P_n)$ on $P_{n+k}$ and $P_{n+k}^\vee$.
\end{example}

\begin{lemma}\label{sheetModuleLemma}
Let $Q\in \BN{n}{m}$ kill turnbacks from above, and let $P = P_n$.
\begin{enumerate}
\item The map $\End(Q)\rightarrow   \End(P\otimes Q)$ sending $f\mapsto \Id_P\otimes f$ is a homotopy equivalence.
\item The two maps $\End(P)\rightarrow \End(P\otimes Q)$ sending $f\mapsto f\otimes \Id_Q$ and $f\mapsto \Id_P\otimes \pi_Q(f)$ are homotopic.
\item $\pi_P\simeq \Id_{\End(P)}$, i.e.~the two given actions of $\End(P)$ on $P$ are homotopic.
\item $\pi_{P^\vee}\simeq (f\mapsto f^\vee)$, i.e.~the two given actions of $\End(P)$ on $P^\vee$ are homotopic.
\end{enumerate}
\end{lemma}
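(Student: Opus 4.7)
First I would observe that parts (1) and (2) follow from a single application of Proposition \ref{iotaProp}, which yields not merely a homotopy equivalence $\iota \otimes \Id_Q : Q \to P \otimes Q$ but also a strict one-sided retraction $\phi \circ (\iota \otimes \Id_Q) = \Id_Q$.  The chain map $\rho := L_\phi \circ R_{\iota \otimes \Id_Q} : \End(P \otimes Q) \to \End(Q)$ sending $g \mapsto \phi \circ g \circ (\iota \otimes \Id_Q)$ is then a composite of homotopy equivalences by Proposition \ref{LRprop}, hence itself an equivalence, and the identity $\phi \circ (\Id_P \otimes f) \circ (\iota \otimes \Id_Q) = \phi \circ (\iota \otimes \Id_Q) \circ f = f$ shows that $f \mapsto \Id_P \otimes f$ is a strict right inverse to $\rho$.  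Two-out-of-three then delivers (1), and for (2) the same identity shows that both $f \otimes \Id_Q$ and $\Id_P \otimes \pi_Q(f)$ are sent to $\pi_Q(f)$ under $\rho$, so that injectivity of $\rho$ on homotopy classes yields the desired homotopy $f \otimes \Id_Q \simeq \Id_P \otimes \pi_Q(f)$.

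For (3), I would exploit the observation that $\Id_P \otimes \iota : P \to P \otimes P$ is also a standard equivalence in the sense of Definition \ref{standardEquivDef}, since $(\Id_P \otimes \iota) \circ \iota_P = \iota \otimes \iota = \iota_{P \otimes P}$, just as for $\iota \otimes \Id_P$.  By the uniqueness half of Proposition \ref{standardEquivProp}, this gives $\iota \otimes \Id_P \simeq \Id_P \otimes \iota$ in $\Hom(P, P \otimes P)$.  A chain of substitutions then produces
\[
\pi_P(f) = \phi \circ (f \otimes \Id_P) \circ (\iota \otimes \Id_P) \simeq \phi \circ (f \otimes \Id_P) \circ (\Id_P \otimes \iota) = \phi \circ (\Id_P \otimes \iota) \circ f \simeq \phi \circ (\iota \otimes \Id_P) \circ f = f,
\]
where the middle equality uses bifunctoriality of $\otimes$ with vanishing Koszul signs since $|\iota| = 0$.

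For (4), my plan is to reduce to a computation in $\Hom(P^\vee, 1_n)$.  Applying the duality functor to the equivalence $R_{\iota_P} : \End(P) \to \Hom(1_n, P)$ from Proposition \ref{standardEquivProp} shows that $L_{\iota^\vee} : \End(P^\vee) \to \Hom(P^\vee, 1_n)$, $g \mapsto \iota^\vee \circ g$, is a homotopy equivalence, so it suffices to show $\iota^\vee \circ \pi_{P^\vee}(f) \simeq \iota^\vee \circ f^\vee = (f \circ \iota)^\vee$.  Writing $\sigma = f \circ \iota$, the left-hand side simplifies to $\iota^\vee \circ \phi_{P^\vee} \circ (\sigma \otimes \Id_{P^\vee})$, and I would then identify $\iota^\vee \circ \phi_{P^\vee}$ with the canonical map $\psi_0 : P \otimes P^\vee \to 1_n$ corresponding to $\Id_P$ under Corollary \ref{homRotation}(3): both restrict to $\iota^\vee$ on $P^\vee$ via the equivalence $\iota \otimes \Id_{P^\vee}$, so pre-composition with this equivalence identifies them up to homotopy.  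The hard part will be establishing the non-trace analog of Lemma \ref{epsilonLemma}, namely $\psi_0 \circ (f \otimes \Id_{P^\vee}) \simeq \psi_0 \circ (\Id_P \otimes f^\vee)$, which I expect to follow by tracing through naturality of the duality isomorphism of Corollary \ref{homRotation}(3) exactly as in the proof of Lemma \ref{epsilonLemma}; once this is in hand, a direct manipulation collapses $\psi_0 \circ (\Id_P \otimes f^\vee) \circ (\iota \otimes \Id_{P^\vee})$ to $\iota^\vee \circ f^\vee$, completing the proof.
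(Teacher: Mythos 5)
Parts (1), (2), and (3) reproduce the paper's argument almost verbatim: the paper defines the same retraction $\Phi(F)=\psi\circ F\circ(\iota\otimes\Id_Q)$, observes that $f\mapsto\Id_P\otimes f$ is a strict right inverse and hence a two-sided homotopy inverse, post-composes with $\Phi$ to establish (2), and uses the uniqueness of standard equivalences $P\simeq P\otimes P$ to swap $\iota\otimes\Id_P$ with $\Id_P\otimes\iota$ for (3). Your phrasing in terms of $\rho$ and ``two-out-of-three'' is cosmetically different but the mathematics is the same.

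Part (4) is where you depart from the paper, and that is also where your argument stops being a proof. The paper works with $\Tr(P\otimes P^\vee)$, invokes Lemma \ref{epsilonLemma} directly to get $\phi(f)=\e\circ\Tr(\Id_P\cotimes f)$, and assembles a commutative square whose two circumnavigations are $\pi_{P^\vee}$ and $g\mapsto g^\vee$. You instead propose to reduce modulo the equivalence $L_{\iota^\vee}:\End(P^\vee)\to\Hom(P^\vee,1_n)$ and to identify $\iota^\vee\circ\phi_{P^\vee}$ with the counit $\psi_0$ of Corollary \ref{homRotation}(3). This is a legitimate alternate strategy and would avoid the trace picture entirely, but two of its steps are only sketched: (i) the claim that $\psi_0\circ(\iota\otimes\Id_{P^\vee})\simeq\iota^\vee$ is asserted on the grounds that ``both restrict to $\iota^\vee$,'' but the $\psi_0$ side of that restriction is itself a nontrivial naturality computation in the $A$-variable of Corollary \ref{homRotation}(3), not a tautology; and (ii) the ``non-trace analog of Lemma \ref{epsilonLemma},'' namely $\psi_0\circ(f\otimes\Id_{P^\vee})\simeq\psi_0\circ(\Id_P\otimes f^\vee)$, is exactly the substantive input your argument needs, and you acknowledge it is unproven. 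Both claims are almost certainly true and should follow by the same naturality bookkeeping as in the proofs of Theorem \ref{dualityIsomorphism} and Lemma \ref{epsilonLemma}, but until you write out those naturality squares (with the Koszul signs from Definitions \ref{dualityFunctor} and \ref{homAsFunctor}) your part (4) is an outline, not a proof. If you carry this through, the resulting argument would be a clean variant of the paper's proof that trades the Markov trace for the evaluation map of the rigid structure; the paper's version has the advantage that Lemma \ref{epsilonLemma} is already on the shelf.
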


\begin{proof}
(1).  Let $\iota = \iota_P:1_n\rightarrow P$ the the inclusion of the degree zero chain group, as usual.  By proposition \ref{iotaProp} we have standard equivalences $\iota\otimes \Id_Q : Q\rightarrow  P\otimes Q$ and $\psi:P\otimes Q\rightarrow Q$ such that $\psi \circ (\iota \otimes \Id_Q) = \Id_Q$.  The map $\Phi(F):= \psi\circ F \circ (\iota\otimes \Id_Q)$ is a homotopy equivalence $\Phi:\End(P\otimes Q)\simeq \End(Q)$ since it is pre- and post- composition with homotopy equivalences.  The map $\Phi'(f):=\Id_P\otimes f$ is a right inverse for $\Phi$ since $\Phi(\Phi'(f)) = \psi\circ (\Id_P\otimes f)\circ (\iota\otimes \Id_Q) = \psi\circ (\iota\otimes \Id_Q)\circ f = f$.  Since $\Phi$ is a homotopy equivalence, it follows that $\Phi'$ a homotopy inverse for $\Phi$.  This proves (1).

(2) Let $\Psi_1,\Psi_2: \End(P)\rightarrow \End(P\otimes Q)$ be the maps $\Psi_1(f) = f\otimes \Id_Q$, $\Psi_2(f) = \Id_P\otimes \pi_Q(f)$.  Post-composing with the homotopy equivalence $\Phi:\End(P\otimes Q)\simeq \End(Q)$ above gives $\Phi\circ \Psi_1(f) = \pi_Q(f)$ (by definition of $\pi_Q$) and $\Phi\circ \Psi_2(f) = \psi\circ( \iota\otimes \pi_Q(f)) = \pi_Q(f)$.  Since $\Phi$ is a homotopy equivalence, this shows $\Psi_1\simeq \Psi_2$ which is (2).

(3).  We have $\iota\otimes \Id_P\simeq \Id_P\otimes \iota$, since both are standard equivalences $P\simeq P\otimes P$ (definition \ref{standardEquivDef}).  Therefore $\pi_P(f) = \phi \circ (f\otimes \Id_P)\circ (\iota\otimes \Id_P)$ is homotopic to the map $\pi'(f) = \phi \circ (f\otimes \Id_P)\circ (\Id_P\otimes \iota) = \phi\circ (\Id_P\otimes \iota)\circ f$ which, in turn is homotopic to the identity since $\phi\circ (\Id_P\otimes \iota)\simeq \Id_P$.  This proves (3).

(4).  Let $\bra{\ \ }$ denote the functor $C\mapsto \Hom^\bullet(C,\emptyset)$, and let $\phi:\End(P^\vee)\cong q^n \bra{\Tr(P\otimes P^\vee)}$, $\e = \phi(\Id_{P^\vee})$, be as in lemma \ref{epsilonLemma} (with $A = P^\vee$).  Consider the following diagram:
\[
\begin{diagram}
\End(\pic{nProjector_white-black}^\oplus) & \lTo^{\simeq } & \End(\pic{nProjectorDual})\\
\dTo^{\Tr} & & \dTo_{\phi}\\
\End(\pic{nProjector_whiteBlackTrace}\ ^\oplus)  & \rTo^{L_\e} & q^n\bra{\pic{nProjector_whiteBlackTrace}\ ^\oplus}
\end{diagram}
\]
where the top arrow is $f\mapsto \Id_P\otimes f$.  The square commutes because $\phi(f) = \e\circ \Tr(\Id_P\otimes f)$ by lemma \ref{epsilonLemma}.  Inverting the topmost (which is a homotopy equivalence by part (2)) and rightmost arrows, we obtain a diagram which commutes up to homotopy
\[
\begin{diagram}
\End(\pic{nProjector}) & \rTo & \End(\pic{nProjector_white-black}^\oplus) & \rTo^{\simeq } & \End(\pic{nProjectorDual})\\
& & \dTo^{\Tr} & & \uTo_{\phi\inv}\\
& & \End(\pic{nProjector_whiteBlackTrace}\ ^\oplus)  & \rTo^{L_\e} & q^n\bra{\pic{nProjector_whiteBlackTrace}\ ^\oplus}
\end{diagram}
\]
where the left-most horizontal arrow is $g\mapsto g\otimes \Id$.  The composition along the top row is precisely $\pi_{P^\vee}$.  One checks that the composition in the other direction sends $g$ to $\phi\inv$ of the map $\e\circ \Tr(g\otimes \Id_{P^\vee}) = \phi(g^\vee)$ (again, see lemma \ref{epsilonLemma}).  In other words, $\pi_{P^\vee}$ is homotopic to $g\mapsto g^\vee$.  This completes the proof.
\end{proof}

\begin{proposition}\label{sheetModuleProp}
Let $Q\in \Ch(\bn{n}^\oplus)$ be a tensor product $Q = Q_1\otimes \dots \otimes Q_r$ with $Q_i\in\{P_n, P_n^\vee\}$ for all $i$.  Any two of the resulting (left or right) actions of $\End(P_n)$ on $Q$ are homotopic.
\end{proposition}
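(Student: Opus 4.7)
The plan is to induct on $r$, using lemma \ref{sheetModuleLemma} and the duality functor to propagate homotopies between actions at different positions in the tensor product. For each $i \in \{1,\ldots,r\}$, define an action $\pi^{(i)}:\End(P_n) \to \End(Q)$ at position $i$ by setting $\pi^{(i)}(f) = \Id_{Q_1}\otimes \cdots \otimes \alpha_i(f) \otimes \cdots \otimes \Id_{Q_r}$, where $\alpha_i(f) = f$ if $Q_i = P_n$ and $\alpha_i(f) = f^\vee$ if $Q_i = P_n^\vee$ (these are the natural actions of $\End(P_n)$ on the two kinds of factors, justified by parts (3) and (4) of the preceding lemma). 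The left action $\pi_Q$ will be identified with $\pi^{(1)}$ and the analogous right action with $\pi^{(r)}$, so it suffices to show that $\pi^{(i)} \simeq \pi^{(i+1)}$ for all $i$.

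By bifunctoriality of $\otimes$, writing $Q = A \otimes (Q_i \otimes Q_{i+1}) \otimes B$ and tensoring on the left by $\Id_A$ and on the right by $\Id_B$ reduces the claim to the case $r=2$ on the middle two factors. When $Q_i = P_n$, apply lemma \ref{sheetModuleLemma}(2) with the lemma's ``$Q$'' set equal to $Q_{i+1}$: this yields $f\otimes \Id_{Q_{i+1}} \simeq \Id_{P_n} \otimes \pi_{Q_{i+1}}(f)$, and identifying $\pi_{Q_{i+1}}$ via (3) or (4) gives $\pi^{(1)} \simeq \pi^{(2)}$ on $Q_i \otimes Q_{i+1}$. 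When $Q_i = P_n^\vee$, transfer the problem to the dual: the complex $(Q_i \otimes Q_{i+1})^\vee = Q_{i+1}^\vee \otimes Q_i^\vee$ has $P_n$ (rather than $P_n^\vee$) as its leftmost factor, so the previous case applies, and proposition \ref{dualityAndPlanarComp} together with the functoriality of $(\ )^\vee$ transfers the homotopy back to $Q_i \otimes Q_{i+1}$.

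The main obstacle is the sign bookkeeping in the duality argument: one must verify that the image of $\pi^{(j)}$ under $(\ )^\vee$ corresponds precisely to $\pi^{(3-j)}$ on the reversed tensor product, with the correct Koszul signs inherited from the differential graded structure of $(\ )^\vee$ acting on morphisms. Once this is settled, the identification of $\pi_Q$ with $\pi^{(1)}$ up to homotopy follows directly from definition \ref{sheetModuleConstruction} and part (2) of the lemma applied with ``$Q$'' = $Q$ itself, since the composite $\phi\circ (f\otimes \Id_Q)\circ (\iota\otimes \Id_Q)$ becomes $\phi\circ (\iota\otimes \pi^{(1)}(f)) \simeq \pi^{(1)}(f)$. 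The analogous identification of $\pi^{(r)}$ with the right action is obtained by the same argument after applying the vertical reflection symmetry $s_y(P_n) \simeq P_n$ of proposition \ref{projSymmetries}, which gives a right-sided analog of lemma \ref{sheetModuleLemma}(2).
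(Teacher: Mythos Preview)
Your approach mirrors the paper's: reduce to adjacent positions, hence to $r=2$, then treat each ordered pair using lemma~\ref{sheetModuleLemma}(2)--(4) together with duality. The paper lists the three cases $P\otimes P$, $P\otimes P^\vee$, and $P^\vee\otimes P^\vee$ (the last handled by dualizing the first), and your write-up does the same with more explicit detail on the reduction and on identifying $\pi_Q$ with $\pi^{(1)}$.

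There is one genuine gap in your duality step. You assert that when $Q_i=P_n^\vee$ the dual $(Q_i\otimes Q_{i+1})^\vee=Q_{i+1}^\vee\otimes Q_i^\vee$ has $P_n$ as its leftmost factor. But the leftmost factor is $Q_{i+1}^\vee$, not $Q_i^\vee$; your claim is only correct when $Q_{i+1}=P_n^\vee$ as well. In the remaining pair $Q_i\otimes Q_{i+1}=P_n^\vee\otimes P_n$, dualizing returns $P_n^\vee\otimes P_n$, so the appeal to ``the previous case'' is circular. (The paper's list also omits this pair, presumably subsuming it under ``similarly''.)

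The repair is exactly the tool you already invoke in your final paragraph: the reflection $s_x(P_n)\simeq P_n$ of proposition~\ref{projSymmetries} (swapping top and bottom; note it is $s_x$, not $s_y$) yields a right-sided analog of lemma~\ref{sheetModuleLemma}(2) for complexes killing turnbacks from below. Applied with $Q=P_n^\vee$ this gives $\Id_{P_n^\vee}\otimes f\simeq \pi_{P_n^\vee}^{\text{right}}(f)\otimes\Id_{P_n}$ on $P_n^\vee\otimes P_n$, and the right-sided analog of part~(4) identifies $\pi_{P_n^\vee}^{\text{right}}$ with $f\mapsto f^\vee$. This closes the missing case directly, without duality.
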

Applying $(\ \ )^\vee$, we have a similar result with $\otimes$ replaced by $\cotimes$.
\begin{proof}
We need only handle the cases (i) $Q = P\otimes P$, (ii) $Q = P\otimes P^\vee$, and (iii) $Q = P^\vee\otimes P^\vee$.

(i) In case $Q = P\otimes P$, part (2) of lemma \ref{sheetModuleLemma} gives that $f\mapsto f\otimes \Id_P$ is homotopic to $f\mapsto \Id_P\otimes \pi_P(f)$, which is in turn homotopic to $f\mapsto \Id_P\otimes f$ by part (3) of the same lemma.  

(ii) The case $Q = P\otimes P^\vee$ is handled similarly.

(iii) $P^\vee\otimes P^\vee = (P\otimes P)^\vee$, so the argument above takes care of this case.
\end{proof}

\begin{corollary}\label{sheetAlgCommCor}
$\End(P_n)$ is homotopy commutative, i.e.~$f\otimes g\mapsto f\circ g$ and $f\otimes g\mapsto (-1)^{|f||g|}g\circ f$ are homotopic as maps $\End(P)^{\otimes 2}\rightarrow \End(P)$.
\end{corollary}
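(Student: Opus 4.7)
The plan is to prove homotopy commutativity by comparing composition in $\End(P)$ with the tensor product in $\End(P\otimes P)$; the latter is Koszul-symmetric essentially on the nose, so transporting through the equivalence $P \simeq P \otimes P$ will produce the desired homotopy.

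First I would set up two chain maps $\pi_L,\pi_R : \End(P) \to \End(P \otimes P)$ given by $\pi_L(f) = f\otimes \Id_P$ and $\pi_R(f) = \Id_P\otimes f$. A direct check from the Koszul rule shows that both are dg algebra homomorphisms on the nose (the Koszul sign is trivial because the ``other'' factor is the identity in degree zero). Lemma \ref{sheetModuleLemma}(1) applied to $Q=P$ shows that $\pi_R$ is a homotopy equivalence; Lemma \ref{sheetModuleLemma}(2) gives $\pi_L(f)\simeq \Id_P\otimes\pi_P(f)$, and combining this with Lemma \ref{sheetModuleLemma}(3) ($\pi_P\simeq \Id_{\End(P)}$) yields $\pi_L\simeq \pi_R$. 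In particular $\pi_L$ is also a homotopy equivalence, and I pick a homotopy inverse $\rho$.

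Next I would use the Koszul identity $(\Id_P\otimes f)\circ(g\otimes\Id_P) = (-1)^{|f||g|}(g\otimes\Id_P)\circ(\Id_P\otimes f)$, which holds on the nose, to chain together
\[
\pi_L(f\circ g) = \pi_L(f)\circ \pi_L(g) \simeq \pi_R(f)\circ \pi_L(g) = (-1)^{|f||g|}\pi_L(g)\circ\pi_R(f)\simeq (-1)^{|f||g|}\pi_L(g)\circ\pi_L(f) = (-1)^{|f||g|}\pi_L(g\circ f).
\]
Here each ``$\simeq$'' is obtained by post- or pre-composing a fixed chain homotopy $\pi_L\simeq \pi_R$ with the corresponding $\pi_L(g)$ or $\pi_L(f)$, which manifestly depends naturally on $f$ and $g$. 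Thus the composite chain maps $\pi_L\circ m$ and $\pi_L\circ m^{\mathrm{op}}$ from $\End(P)^{\otimes 2}$ to $\End(P\otimes P)$ are homotopic, where $m(f\otimes g) = f\circ g$ and $m^{\mathrm{op}}(f\otimes g) = (-1)^{|f||g|}g\circ f$.

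Finally, applying the homotopy inverse $\rho$ gives $m \simeq \rho\circ \pi_L\circ m \simeq \rho\circ\pi_L\circ m^{\mathrm{op}}\simeq m^{\mathrm{op}}$, proving homotopy commutativity. The main obstacle is the sign bookkeeping in the Koszul identity and in checking that the homotopies assemble into a single homotopy of bilinear chain maps $\End(P)^{\otimes 2}\to \End(P)$, but this is a routine consequence of the fact that pre- and post-composition with chain maps preserve chain homotopies.
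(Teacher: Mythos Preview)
Your argument is correct and rests on the same key input as the paper---namely that the two ``left'' and ``right'' actions of $\End(P)$ on a doubled object coincide up to homotopy, which is the content of Lemma~\ref{sheetModuleLemma} (and its corollary Proposition~\ref{sheetModuleProp}). The implementations differ slightly: the paper transports through the duality isomorphism $\End(P)\cong q^n\ket{\Tr(P\cotimes P^\vee)}$, under which the left and right regular actions become $f\mapsto\Tr(f\cotimes\Id)$ and $f\mapsto\Tr(\Id\cotimes f^\vee)$, invokes Proposition~\ref{sheetModuleProp} on $P\cotimes P^\vee$ to conclude $L\simeq R$ as maps $\End(P)\to\End(\End(P))$, and then uncurries. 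You instead stay on the $P\otimes P$ side, use Lemma~\ref{sheetModuleLemma} directly to get $\pi_L\simeq\pi_R:\End(P)\to\End(P\otimes P)$, and run an explicit Eckmann--Hilton style manipulation using the Koszul identity before inverting $\pi_L$. Your route is a bit more self-contained in that it avoids the duality machinery; the paper's route is shorter because the uncurrying step is immediate once $L\simeq R$ is known. Both are the same argument at heart.
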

\begin{proof}
Under the isomorphism $\End(P_n)\cong q^n\ket{\pic{nProjector_whiteBlackTrace}\ ^\Pi}$ implied by theorem \ref{dualityIsomorphism}, the left and right regular actions of $\End(P)$ on itself correspond to the maps $f\mapsto \Tr(f\cotimes \Id)$ and  $f\mapsto \Tr(\Id\cotimes f^\vee)$, respectively (this is just a special case of the naturality statement in that theorem).  But proposition \ref{sheetModuleProp} says that these actions coincide up to homotopy.  Thus, $(f\mapsto L_f)$ and $(f\mapsto R_f)$ are homotopic as maps $\End(P_n)\rightarrow \End(\End(P_n))$.  It is an easy check that this implies the statement.
\end{proof}

\subsection{The unknots as algebras}
We give an explicit description of the action of $q^n\ket{\pic{nProjector_trace}\ }$ on $\pic{nProjector}$ .  This section is reminiscent of section \ref{ordinaryDualitySection}.

\begin{definition}
Let $s = \pic{nSaddle_bar}\ $ denote the map $\pic{nSaddle_before}\rightarrow \pic{nSaddle_after} $ of $q$-degree $n$ consisting of $n$ parallel saddle cobordisms.  Let $\eta:\emptyset\rightarrow q^n \pic{nProjector_trace}$ be the map of $q$-degree $-n$ which is $n$ parallel ``cap'' cobordisms $\eta = (\emptyset\rightarrow \underbrace{\pic{circle}\sqcup \cdots \sqcup \pic{circle}}_n = (\pic{nProjector_trace}\ )^0\hookrightarrow \pic{nProjector_trace}\ )$.
\end{definition}
The saddle induces a chain map $\psi: q^n\ \ket{\pic{nProjector_trace}\ }\rightarrow \End(\pic{nProjector})$ given by
\[
\psi(\zeta) = (\pic{nProjector} \buildrel\zeta\sqcup\Id\over\longrightarrow \pic{nProjector_trace}\  \pic{nProjector}\buildrel\mpic{unknot-arc_saddle}\ \ \ \ \ \over\longrightarrow \pic{nProjector_traceAndProjectorMerged}\ \ \ \ \  \simeq \pic{nProjector} ),
\]
where the final map is given by sliding the projectors so they are adjacent and applying a standard equivalence which merges them.  We also have a map $\phi:\End(\pic{nProjector})\rightarrow q^n\ket{\pic{nProjector_trace}\ }$ defined by $\phi(f) = \Tr(f)\circ \eta$.  For the following proposition it is useful to recall the definitions and results of section \ref{ordinaryDualitySection}.  In particular, the diagrams in that section still give a useful way for visualizing the maps $\eta,s,\phi,$ and $\psi$.
\begin{proposition}\label{unknotAction}
The action of $q^n\ket{\pic{nProjector_trace}\ }$ on $\pic{nProjector}$ induced by saddle cobordisms agrees with the action implied by proposition \ref{endRings}, with $\eta$ acting as the unit.  More precisely, let $s,\eta,\phi,$ and $\psi$ be as in the preceding discussion.  Then
\begin{enumerate}
\item $\phi$ is precisely the homotopy equivalence implied by proposition \ref{endRings}.
\item $\psi(\eta)\simeq \Id_P$.
\item $\phi$ and $\psi$ are homotopy inverses.
\end{enumerate}
\end{proposition}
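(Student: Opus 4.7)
My plan is to prove the three parts in order: (1) identifies $\phi$ with the abstract isomorphism of Proposition~\ref{endRings}, (2) is a direct zig-zag computation, and (3) is an analogous zig-zag computation combined with (1).

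For (1), the Proposition~\ref{endRings}(1) isomorphism is the composite of the duality isomorphism $\End(P) \simeq q^n\ket{\Tr(P\cotimes P^\vee)}$ from Theorem~\ref{dualityIsomorphism} with the absorption equivalence $\Tr(P\otimes P^\vee) \simeq \Tr(P)$ from Proposition~\ref{absorptionRule}. By Lemma~\ref{epsilonLemma}, the duality half has the form $f \mapsto \epsilon \circ \Tr(f \otimes \Id_{P^\vee})$, where $\epsilon$ is the image of $\Id_P$. The task then reduces to a topological identification: under the absorption equivalence, which at the level of pictures fuses $P$ and $P^\vee$ into a single $P$, the element $\epsilon$ is carried (up to homotopy) to the map $\emptyset \to q^n\Tr(P)$ realized by $n$ parallel cap cobordisms, namely $\eta$; simultaneously $\Tr(f \otimes \Id_{P^\vee})$ becomes $\Tr(f)$. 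Hence the composite sends $f$ to $\Tr(f) \circ \eta = \phi(f)$.

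For (2), the map $\psi(\eta): P \to P$ unfolds as follows: start with $P$, introduce $n$ cap cobordisms via $\eta$ to obtain $\Tr(P) \sqcup P$, apply a saddle on each circle of $\Tr(P)$ to attach it to the corresponding strand of $P$, and finally apply the standard merging equivalence $P\otimes P \simeq P$ of Proposition~\ref{standardEquivProp}. Restricted to a single strand of $P$, the cap-then-saddle pattern is precisely the snake identity of Proposition~\ref{unitAndSaddleProp}(1) and collapses to the identity cobordism on that strand. After this strand-by-strand simplification, we are left with the standard equivalence applied to $\Id_{P\otimes P}$, which is homotopic to $\Id_P$ by Proposition~\ref{standardEquivProp}.

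For (3), I would compute $\phi\circ\psi$ directly by the same strategy as (2). For $\zeta \in q^n\ket{\Tr(P)}$, the element $\phi(\psi(\zeta)) = \Tr(\psi(\zeta)) \circ \eta$ is topologically $\zeta$ placed inside an annular region, together with an $\eta$ and $n$ saddles arranged so that each strand exhibits the same cap-saddle zig-zag as in (2). Cancelling those zig-zags via Proposition~\ref{unitAndSaddleProp}(1) and simplifying the merging via Proposition~\ref{standardEquivProp} leaves exactly $\zeta$, so $\phi\circ\psi \simeq \Id$. Combined with (1) — which establishes that $\phi$ is a homotopy equivalence — this forces $\psi \simeq \phi^{-1}$. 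The main obstacle will be the careful bookkeeping in (1): identifying the abstract counit $\epsilon$ from Lemma~\ref{epsilonLemma} with the explicit $\eta$ after applying the absorption equivalence, and verifying that the various standard equivalences (absorption, merging of two projectors, and those implicit in the duality isomorphism) compose consistently up to homotopy.
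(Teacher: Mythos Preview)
Your overall strategy matches the paper's, but there is a genuine gap in part (1), and a related technical confusion.

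First, the technical confusion: Lemma~\ref{epsilonLemma} concerns the \emph{contravariant} duality isomorphism $\End(A)\to q^{(m-n)/2}\Hom(\Tr(A\otimes A^\vee),\emptyset)$, i.e.\ it lands in $\bra{\ \ }$, not $\ket{\ \ }$. Its counit $\e$ is a map $\Tr(P\otimes P^\vee)\to\emptyset$, so $\e\circ\Tr(f\otimes\Id_{P^\vee})$ lies in the wrong functor and your composite cannot literally reduce to $\Tr(f)\circ\eta$. What you actually need is the covariant isomorphism $\Phi_{P,P}:\End(P)\to q^n\ket{\Tr(P\cotimes P^\vee)}$ from Theorem~\ref{dualityIsomorphism}; its naturality gives $\Phi_{P,P}(f)=\Tr(f\cotimes\Id_{P^\vee})\circ\Phi_{P,P}(\Id_P)$, with the unit on the other side of the composition.

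Second, and more substantively: even after fixing this, you still have to show that the unit $\Phi_{P,P}(\Id_P)\in\ket{\Tr(P\cotimes P^\vee)}$ is carried by the absorption equivalence to the explicit $\eta$. You assert this as a ``topological identification'' but give no argument. This is precisely the step you flag as the main obstacle, and it is not obvious: the abstract $\Phi_{P,P}(\Id_P)$ is a priori a complicated map into an infinite complex, and one needs a reason why it collapses to $n$ cups. The paper's device is to use naturality of $\Phi$ in both variables with respect to the inclusion $\iota:1_n\hookrightarrow P$: chasing $\Id_P$ through the commutative squares relating $\Phi_{P,P}$, $\Phi_{1,P}$, and $\Phi_{1,1}$ reduces the claim to the identity $\Phi_{1,1}(\Id_{1_n})=\eta_1$, which is exactly Proposition~\ref{ordinaryDualityIso}. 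The absorption equivalence enters as the map $\ket{\Tr(\Id_P\cotimes\iota^\vee)}$, which is the standard equivalence by construction. Without this reduction, your argument for (1) remains a hope rather than a proof.

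Your parts (2) and (3) are fine and essentially coincide with the paper's treatment.
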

\begin{proof}
(1) Fix $n$, put $P:=P_n$ and $1:=1_n$, and let $\iota:1\rightarrow P$ be the inclusion of the degree zero chain group.  Let $\Phi_{A,B}:\Hom^\bullet_{\bn{n}}(A,B)\rightarrow q^n \ket{B\cotimes A^\vee}$ be the natural isomorphism implied by Theorem \ref{dualityIsomorphism}, and let $\Psi:q^n\ket{\pic{nProjector_whiteBlackTrace}\ ^\Pi}\rightarrow q^n\ket{\pic{nProjector_trace}\ }$ be the standard equivalence, given by $\Psi = \ket{\Tr(\Id_P\cotimes \iota^\vee)}$.  We intend to show that $\Tr(\Id\cotimes \iota^\vee)\circ \Phi_{P,P}(\Id) = \eta$, from which it will follow that the standard equivalence $\Psi\circ \Phi_{P,P}:\End(P)\rightarrow q^n\ket{\Tr(P)}$ sends $f\in\End(P)$ to
\[
\Tr(\Id_P\cotimes \iota^\vee)\circ \Tr(f\cotimes \Id_{P^\vee})\circ  \Phi_{P,P}(\Id)= \Tr(f)\circ \Tr(\Id_P\cotimes \iota^\vee)\circ \Phi_{P,P}(\Id)=\Tr(f)\circ \eta = \phi(f)
\]
which is part (1) of the theorem.  Indeed, consider the following diagram, which commutes by naturality of $\Phi_{A,B}$.
\[
\begin{diagram}
\End(1) & \rTo^{L_\iota} & \Hom(1, P) & \lTo^{R_\iota} & \End(P)\\
\dTo^{\Phi_{1,1}} & & \dTo^{\Phi_{1,P}} & & \dTo^{\Phi_{P,P}}\\
q^n\ket{\pic{nStrands_unknot}\ } &\rTo^{\ket{\Tr(\iota)}} & q^n\ket{\pic{nProjector_trace}\ } & \lTo^{\ket{\Tr(\Id\cotimes \iota^\vee)}} & q^n\ket{\pic{nProjector_whiteBlackTrace}\ ^\Pi}
\end{diagram}
\]
We want to see that the image of $\Id_P$ (in the top right corner) is $\eta$ (in the chain complex appearing in the middle of the bottom row).  By commutativity of the square on the right, this image is equal to $\Phi_{1,P}(\iota)$.  By commutativity of the square on the left, this is in turn equal to $\Tr(\iota)\circ \Phi_{1,1}(\Id_1)$.  By ordinary duality (proposition \ref{ordinaryDualityIso}) we have $\Phi_{1,1}(\Id) =\eta_1$, where $\eta_1:\emptyset\rightarrow \pic{nStrands_unknot}\ $ is $n$ disjoint cup cobordisms.  It follows that
\[
\Tr(\Id\cotimes \iota^\vee)\circ \Phi_{P,P}(\Id)= \Tr(\iota)\circ \eta_1 = \eta.
\]
The last equality here is a simple observation which follows immediately from the definitions.  This proves (1).

(2) We have a commutative diagram
\[
\begin{diagram}
\pic{nProjector} & \rTo^{\eta_1\sqcup \Id} & \pic{nStrands_unknot}\ \pic{nProjector} & \rTo^{\Tr(\iota)\sqcup \Id} & \pic{nProjector_trace}\ \pic{nProjector} & & \\ 
& & \dTo^{\mpic{unknot-projector_saddle}\ \ \ \ } & & \dTo^{\mpic{unknot-arc_saddle}\ \ \ } & & \\
& & \pic{unknotProjectorMerged}\ \ \ \ \ & \rTo^{\iota\otimes \Id} & \pic{nProjector_traceAndProjectorMerged}\ \ \ \ \  & \rTo^{\simeq} & \pic{nProjector}
\end{diagram}
\]
One composition is $\psi(\Tr(\iota)\circ \eta_1) = \psi(\eta)$, and the other is homotopic to the identity since $\eta_1$ followed by $n$ parallel saddle cobordisms gives the identity on $n$ strands (see section \ref{ordinaryDualitySection}).  This proves (2)

(3)  The maps $s,\eta$, $\phi$, and $\psi$ can be represented schematically as in section \ref{ordinaryDualitySection}.  The proof that $\psi$ and $\phi$ are inverses follows the same lines, using (2).
\end{proof}
There are similar results describing the actions of $\ket{\ \pic{thetaNet}\ \ }$ and $\ket{ \ \pic{pillNet}\ \ \ \ }$ on $\pic{3network}\ $ and $\ \pic{iNet_i}\ \ $, respectively.
\subsection{The 2-colored unknot}
We can use proposition \ref{unknotAction} to describe explicitly the algebra structure of the unknots in terms of the saddle maps and the equivalence $\phi:\pic{2projector_white-white}\simeq \pic{2projector}$.  The homology of $\End(P_2^\vee)$ was computed in \cite{CHK11}, and explicit generators were found.  Here we make an explicit connection  with the trace of $P_2$.  We state without proof the following:
\begin{proposition}  The saddle map makes $q^2\ket{\pic{2projector_trace}\ }$ into a dg algebra isomorphic to $\Z[\a,b_1,b_2,v]/(b_i^2=\a, b_1v=b_2v)$ with differential generated by $d(b_i)=0$, $d(v)=b_1+b_2$ under the Leibniz rule.  Here, the degrees of the generators are $\deg(b_i)=(0,2)$, $\deg(\a)=(0,4)$, and $\deg(v)=(-1,2)$.  Moreover, the action of this algebra on $\pic{2projector}$ is given by $b_1\mapsto \pic{2projector_leftDot}$, $b_2=\pic{2projector_rightDot}$, and 
\begin{equation}
\label{v}
v\mapsto
\left(\begin{diagram}
& \cdots & \rTo^{\differenceOfDots} & q^5 \turnback & \rTo^{\sumOfDots} & q^3 \turnback & \rTo^{\differenceOfDots} & q \turnback & \rTo^{\isaddle} & \underline\straightthrough\\
\ldTo^\Id & & \ldTo^\Id & & \ldTo^\Id & & \ldTo^\Id & & \ldTo^{\mpic{hsaddle}}\\
& \cdots & \rTo_{\differenceOfDots} & q^5 \turnback & \rTo_{\sumOfDots} & q^3 \turnback & \rTo_{\differenceOfDots} & q \turnback & \rTo_{\isaddle} & \underline\straightthrough
\end{diagram}\right)\qed
\end{equation}
\end{proposition}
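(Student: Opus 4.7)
The plan is to compute the dg algebra $q^2\ket{\Tr(P_2)}$ explicitly using the Cooper--Krushkal model of Example~\ref{2projector} and then transport the multiplication from $\End(P_2)$ along the equivalence $\phi$ from Proposition~\ref{unknotAction}.  First I would apply the Markov trace to Example~\ref{2projector}: each $\bturnback$ traces to a single circle and $\bstraightthrough$ to two disjoint circles; the differential $\bisaddle$ traces to the comultiplication $\Delta$ of the Frobenius algebra $\Z[\a][x]/(x^2-\a)$, satisfying $\Delta(1)=1\otimes x+x\otimes 1$ and $\Delta(x)=x\otimes x+\a\cdot 1\otimes 1$, while $\bdifferenceOfDots$ traces to zero (top-arc and bottom-arc dots become isotopic dots on the closed circle) and $\bsumOfDots$ traces to multiplication by $2x$.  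After applying $\ket{\,\cdot\,}$ and the $q^2$-shift, the underlying graded $\Z[\a]$-module is free on $1,b_1,b_2,b_1b_2$ in bidegree $(0,\ast)$ and on $v^k, b_1v^k$ (the undotted and dotted disks) in bidegree $(-k,\ast)$ for $k\geq 1$; comparing degrees matches the ansatz of the statement.

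The differentials then read off directly: $d(v)=\Delta(1)=b_1+b_2$, $d(b_1v)=\Delta(x)=\a+b_1b_2$, and $d(\a)=d(b_i)=0$.  Combined with the relations $b_i^2=\a$ and $b_1v=b_2v$, the graded Leibniz rule reproduces the pattern $d(v^{2m})=0$ and $d(v^{2m+1})=2b_1v^{2m}$ at higher homological degree, in exact agreement with the alternation $d_{-2k}=0$, $d_{-2k-1}=2x\cdot$ computed above from the trace.  This pins down the chain complex.

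The crux is to verify the algebra relations on the nose under the product transported from $\End(P_2)$.  By Proposition~\ref{unknotAction}(1), multiplication by $b_i$ in $q^2\ket{\Tr(P_2)}$ is post-composition with $\Tr(\psi(b_i))$.  The decisive observation is that $\Tr(\psi(b_1))$ and $\Tr(\psi(b_2))$ differ only on the degree-zero chain group (two circles), where they act as $x_1\cdot$ and $x_2\cdot$ respectively, but on every turnback-indexed chain group (one circle) they both collapse to multiplication by the dot $x$.  Since each $v^k$ ($k\geq 1$) lives in a turnback chain group, the equality $b_1v^k=b_2v^k$ holds as a chain-level identity.  The relation $b_i^2=\a$ is on the nose because two dots on a single strand reduce to $\a$ times the identity via the sphere/neck-cutting relations.

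Finally, the action on $P_2$ displayed in~(\ref{v}) is a transcription of $\psi$ on the generators.  Proposition~\ref{unknotAction} directly gives $\psi(b_i)$ as a dot on the $i$-th strand of $P_2$; and $\psi(v)$ is obtained by feeding the undotted disk at homological degree $-1$ of $\Tr(P_2)$ into the saddle-absorption construction, producing the identity on every turnback summand and the horizontal saddle $\bhsaddle$ at $\bstraightthrough$.  I expect the main technical obstacle to be the on-the-nose verification of $b_1v=b_2v$ in the third paragraph: once the trace has collapsed the two dots the identity becomes manifest, but the collapse itself requires an explicit unpacking of $\psi$ and careful bookkeeping through the saddle cobordisms; the remaining verifications are direct computation with Frobenius algebras on the Cooper--Krushkal model.
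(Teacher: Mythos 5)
The paper states this proposition explicitly ``without proof,'' so there is no authorial argument to compare against; what follows is therefore an assessment of the proposal on its own terms.

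Your computation of the chain complex $q^2\ket{\Tr(P_2)}$ from Example~\ref{2projector} is correct: tracing $\bstraightthrough$ gives two circles, tracing $\bturnback$ gives one, $\bisaddle$ becomes the comultiplication $\Delta$ of $\Z[\a][x]/(x^2-\a)$, and the two dot-alternation maps collapse to $0$ and $2x$ respectively because the top and bottom arcs of $\bturnback$ become isotopic after closure. The resulting $\Z[\a]$-module and differentials match the stated polynomial dg algebra, and the computation of $\psi(v)$ reproduces the displayed formula.

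The one place where your framing is off --- and it matters --- is the phrase ``the product transported from $\End(P_2)$ along $\phi$.'' The map $\phi$ of Proposition~\ref{unknotAction} is only a homotopy equivalence (it passes through the absorption $P_2\cotimes P_2^\vee\simeq P_2$), not a chain isomorphism, so transporting the composition product of $\End(P_2)$ along $\phi$ does not yield a well-defined chain-level dg algebra structure. Indeed, the remark immediately following the proposition observes that $q^2\ket{\Tr(P_2)}$ is honestly commutative whereas $\End(P_2)$ is only graded-commutative up to homotopy (Corollary~\ref{sheetAlgCommCor}); the two multiplications cannot literally coincide. What the statement ``the saddle map makes $q^2\ket{\Tr(P_2)}$ into a dg algebra'' is asking for is the intrinsic chain-level product given directly by the saddle $s$ and a once-and-for-all chosen merge $P_2\otimes P_2\simeq P_2$, applied to disjoint unknots, rather than a product conjugated through $\phi,\psi$. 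Your key mechanism --- that once the trace has collapsed the two strands of $\bturnback$ to a single circle, ``dot on strand~1'' and ``dot on strand~2'' become the same operator, forcing $b_1v^k=b_2v^k$ for $k\geq 1$ on the nose --- is exactly right and survives this change of framing, because the saddle product also factors through post-composition with dots in the traced picture. But to complete the argument you would need to write down the saddle multiplication concretely on the trace complex (in particular pin down the merge map on chain groups) and verify associativity and the relations there, rather than via $\phi$. This is the bookkeeping you correctly flag as the main obstacle; it is more than a formality, since the two candidate multiplications only agree up to homotopy.
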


We remark that even though $q^2\ket{\pic{2projector_trace}\ }$ is an honestly commutative dg algebra (not graded commutative), it is graded commutative up to homotopy.  In particular, any odd dimensional class squares to zero.  Since $v$ does not represent a class in homology, even powers of $v$ can still represent nontrivial homology classes.  Finally, we remark that this formula for the unknot agrees (setting $\a=0$) with the formula given in \cite{GOR12}, namely
\[
q^2H(\ket{\pic{2projector_trace}\ })\cong H(\Z[x_i,y_i:i=0,1])
\]
with differential $d(x_i)=0$, $d(y_0)=x_0^2$, $d(y_1)=x_0x_1+x_1x_0 = 2x_0x_1$.  The correspondence in homology is generated by $[x_0]=[b_1]$, $[x_1]=[v^2]$, and $[x_0y_1-2x_1y_0]=[b_1v^3]$.
\section{Appendix: homotopy lemmas}\label{homotopySection}
\subsection{Contracting summands}
\begin{lemma}\label{contractibleComplexesLemma}
Suppose $(B,d)$ is a contractible chain complex, i.e. there is $h:B\rightarrow B$ such that $\Id_B = d\circ h +h\circ d$.  Then
\begin{enumerate}
\item $d$ and $h^2$ commute.
\item $e = d\circ h$ and $f: h\circ d$ are orthogonal idempotents.
\item $h':=h\circ d\circ h$ is another nulhomotopy, and satisfies $(h')^2=0$.
\end{enumerate}
\end{lemma}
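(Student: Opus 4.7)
The entire lemma reduces to algebraic manipulation of two relations: $d^2=0$ and the null-homotopy identity $dh+hd=\Id_B$. My plan is to handle the three parts in order, reusing earlier parts as they become available.

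For (1), the plan is to expand $dh^2$ and $h^2 d$ using $\Id = dh + hd$ applied to one copy of $h$:
\[
dh^2 = (dh)h = (\Id - hd)h = h - hdh, \qquad h^2 d = h(hd) = h(\Id - dh) = h - hdh.
\]
The two expressions agree, so $dh^2 = h^2 d$.

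For (2), with $e=dh$ and $f=hd$, I would compute $e^2 = dh\,dh = d(\Id - hd)h = dh - d^2h^2 = dh = e$, using $d^2=0$ in the last step, and similarly $f^2 = hd\,hd = h(\Id - dh)d = hd - h^2 d^2 = hd = f$. For orthogonality, I would first show $ef = dh\cdot hd = d(h^2)d = d(h^2 d) = 0$ using part (1) and $d^2=0$ (alternatively rewrite as $(dh^2)d = (h^2d)d = 0$), while $fe = hd\cdot dh = h\,d^2\,h = 0$ is immediate from $d^2=0$. Note that $e+f = dh+hd = \Id_B$, so $e$ and $f$ are complementary orthogonal idempotents.

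For (3), I would compute the two compositions directly:
\[
dh' = d(hdh) = (dh)(dh) = e^2 = e = dh, \qquad h'd = (hdh)d = (hd)(hd) = f^2 = f = hd,
\]
so $dh' + h'd = e+f = \Id_B$, confirming $h'$ is a null-homotopy. Finally,
\[
(h')^2 = hdh\cdot hdh = h\bigl((dh)(hd)\bigr)h = h\,(ef)\,h = 0
\]
by the orthogonality established in (2).

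I do not anticipate any real obstacle; the only thing to be careful about is that both sides of (1) are being used, and the orthogonality $ef=0$ in (2) must be proved before it can be invoked in (3). Essentially the whole lemma is formal manipulation in the algebra $\End_\Z(B)$ generated by $d$ and $h$ modulo $d^2=0$ and $dh+hd=\Id$, and no features of the category $\mathscr{A}$ enter.
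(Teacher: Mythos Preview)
Your proof is correct and is exactly the kind of direct algebraic verification the paper has in mind; the paper itself simply says ``Straightforward.'' and omits the details you have supplied.
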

\begin{proof}
Straightforward.
\end{proof}

\begin{proposition}[Gaussian elimination]\label{Gauss}
Let $\mathscr{A}$ be a $\Z$-linear category, and suppose we have a chain complex $C = (A\oplus B, D:=\matrix{{}_Ad_A & {}_A d_B \\ {}_B d_A & {}_B d_B})$.  Suppose $(B,{}_Bd_B)$ is a contractible chain complex.  Then $C$ is homotopy equivalent to a chain complex of the form $ A'= (A, {}_A d_A - {}_A d_B\circ h \circ {}_B d_A)$.
\end{proposition}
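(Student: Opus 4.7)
The plan is to produce an explicit strong deformation retract $f \colon A' \leftrightarrows C \colon g$, following the standard Gaussian elimination template. As a preparatory step I invoke lemma \ref{contractibleComplexesLemma}(3) to replace $h$ by $h \circ {}_Bd_B \circ h$, so that without loss of generality $\Id_B = {}_Bd_B \circ h + h \circ {}_Bd_B$ \emph{and} $h^2 = 0$. I also extract, by reading off the $(B,B)$-entry of $D^2 = 0$ and using $({}_Bd_B)^2 = 0$, the identity ${}_Bd_A \circ {}_Ad_B = 0$; this will be used repeatedly below.

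Next, define
\[
f := \begin{bmatrix} \Id_A \\ -h\circ {}_Bd_A \end{bmatrix} \colon A' \to C, \qquad g := \begin{bmatrix} \Id_A & -{}_Ad_B\circ h \end{bmatrix} \colon C \to A'.
\]
Direct matrix multiplication together with $h^2 = 0$ shows $g\circ f = \Id_{A'}$. For the other composition, I propose the degree $-1$ endomorphism $H := \begin{bmatrix} 0 & 0 \\ 0 & h \end{bmatrix}$ of $C$ as a homotopy between $f\circ g$ and $\Id_C$; a routine $2\times 2$ block calculation shows that the only potential discrepancy between $DH + HD$ and $\Id_C - f\circ g$ sits in the $(B,B)$ block and equals $h\circ {}_Bd_A\circ {}_Ad_B\circ h$, which vanishes by the identity above.

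It remains to verify that $f$ and $g$ are chain maps with respect to the prescribed differential $d_{A'} := {}_Ad_A - {}_Ad_B\circ h\circ {}_Bd_A$, and as a consequence that $d_{A'}^2 = 0$ so that $A'$ is itself a chain complex. Both are pure block-matrix manipulations whose only inputs are the nullhomotopy equation, the vanishing of $D^2$, and ${}_Bd_A \circ {}_Ad_B = 0$; for instance, the $B$-component of $D\circ f = f\circ d_{A'}$ reduces, after absorbing $h$ via the nullhomotopy relation, to the $(B,A)$-entry of $D^2 = 0$. The main (entirely cosmetic) obstacle is simply keeping the bookkeeping of these block calculations clean—there is no conceptual difficulty beyond carrying out the Gaussian elimination carefully.
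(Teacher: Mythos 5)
Your proof is correct and follows essentially the same Gaussian-elimination argument as the paper, down to the same reduction to $h^2=0$ via Lemma \ref{contractibleComplexesLemma} and the same inclusion and retraction maps (your $f,g$ are the paper's $i,r$). One small discrepancy worth flagging: the paper displays the homotopy as $H=\matrix{0 & h \\ 0 & 0}$, which does not typecheck since $h\colon B\to B$; your $H=\matrix{0 & 0 \\ 0 & h}$ is the correct version, and your explicit use of ${}_Bd_A\circ{}_Ad_B=0$ (read off from $D^2=0$ together with $({}_Bd_B)^2=0$) is precisely the identity needed to make the $(B,B)$-block of $\Id_C-f\circ g-(D\circ H+H\circ D)$ vanish and to check that $f,g$ are chain maps.
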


\begin{proof}
By hypotheses there is some nulhomotopy $h:B\rightarrow B$, and by lemma \ref{contractibleComplexesLemma} we may assume $h^2=0$.  The relevant maps are defined in the following diagram:
\[
\begin{tikzpicture}
\matrix(m)[matrix of math nodes,
row sep=2.6em, column sep=10em,
text height=1.5ex, text depth=0.25ex]
{\AplusB & A'\\};
\path[->] 
(m-1-1)	edge [loop left, looseness = 0, min distance=20 mm]
		node[below=4mm] {$H=\matrix{0 & h \\ 0 & 0}$}									(m-1-1)
		edge [bend left = 15] node[above] {$r = \matrix{\Id&-{}_A d_B\circ h}$}		(m-1-2)
(m-1-2)	edge [bend left = 15] node[below] {$i = \matrix{\Id\\-h\circ {}_B d_A}$}		(m-1-1);
\end{tikzpicture}
\]
It is straightforward to check that (1) $r$ and $i$ are chain maps, (2) $r\circ i = \Id_A$, (3) $\Id_{A\oplus B}-i\circ r = D\circ H+ H\circ D$, (3) $r\circ H =0$, (4) $H\circ i =0$.  In particular $r$ and $i$ are inverse homotopy equivalences (actually we say that $r$ is a strong deformation retract). 
\end{proof}

In particular if $B$ is a ``subcomplex'' or ``quotient complex'' (i.e.\ ${}_A d_B =0$ or ${}_B d_A =0$, respectively), then $C\simeq (A, {}_A d_A)$.  That is to say, the differential on $A$ is unaffected by contracting $B$ in this case.  More generally, suppose (1) $X$ is some partially ordered set, (2) $A_x^\bullet$ are graded objects indexed by $x\in X$, and (3) we have maps ${}_y d_x:A_x\rightarrow A_y$ of degree 1, $x\leq y$, such that $\sum_{x\leq y}{}_xd_y$ is a differential on $\bigoplus_{x\in X} A_x$.  In particular, each $(A_x, {}_xd_x)$ is a chain complex.

Suppose $Y\subset X$ is a finite subset such that $A_y\simeq 0$ for $y\in Y$.  By iterating proposition \ref{Gauss}, we may contract each $A_y$, $y\in Y$ obtaining 
\[
\Big(\bigoplus_{x\in X} A_x, \sum_{\frac{x,y\in X}{x\leq y}} {}_y d_x\Big) \simeq \Big(\bigoplus_{x\in X\setminus Y} A_x, \sum_{x, y\in X\setminus Y\atop x\leq y}{}_y d'_x\Big)
\]
where ${}_y d'_x - {}_y d_x$ is a sum over paths $x < z_1 < \dots < z_r < y$ with $z_i\in Y$ of maps of the form $\pm {}_yd_{z_r}\circ h\circ \dots \circ h\circ {}_{z_1}d_x$.  We are interested to know when we can contract infinitely many summands in this manner.  It suffices that for each $x\in X$ the set $\{x' \in X\:|\: x<x'\}$ be finite, but this is not necessary.  For our present purposes it will suffice to consider the case of bicomplexes ($X=\Z$ and ${}_j d_i=0$ unless $0\leq j-i\leq 1$), which we describe next.

\subsection{Bicomplexes with contractible columns} \label{bicomplexesSection}

\begin{definition}
Let $\mathscr{A}$ be an additive category.  A \emph{bicomplex over $\mathscr{A}$} is a triple $(A^{\bullet\bullet}, \d, \d')$, where $A^{\bullet\bullet} = (A^{ij})_{i,j\in\Z}$ is a bigraded object over $\mathscr{A}$ and $\d,\d':A^{\bullet \bullet}\rightarrow A^{\bullet \bullet}$ are bihomogeneous maps of degree $(1,0)$, respectively $(0,1)$ such that $(\d+\d')^2=0$.  
\end{definition}

By expanding the equation $(\d+\d')^2=0$ into its bihomogeneous components we obtain (1) $\d^2=0$, (2) $(\d')^2=0$, and (3) $\d\circ \d'+\d'\circ \d =0$.  In particular the \emph{columns} $(A^{i,\bullet},\d')$ and \emph{rows} $(A^{\bullet, j},\d)$ are chain complexes.

\begin{definition}
Let $(A^{\bullet\bullet},\d,\d')$ be a bicomplex.  We can form two chain complexes over $\mathscr{A}$ which we call \emph{total complexes of type I, respectively II}.  These occasionally involve forming countable direct sums or products, and so are not always defined.
\begin{itemize}
\item[(I)] Let $\Tot^\oplus(A)$ denote the chain complex $\Tot^\oplus(A)^k = \bigoplus_{i+j=k}A^{ij}$ with differential $\d+\d'$, when defined.
\item[(II)] Let $\Tot^\Pi(A)$ denote the chain complex $\Tot^\Pi(A)^k = \prod_{i+j=k}A^{ij}$ with differential $\d+\d'$, when defined.
\end{itemize}
\end{definition}

Note that If $A^{\bullet\bullet}$ is supported in quadrant I and/or III then the sum $\bigoplus_{i+j=k}A^{ij}$ and the product $\prod_{i+j=k}A^{ij}$ are finite for each $k$.  So in this case  $\mathscr{A}$ need only contain finite sums (equivalently products)  to form $\Tot^\oplus(A)$ and $\Tot^\Pi(A)$.  In this case $\Tot^\oplus(A)\cong \Tot^\Pi(A)$.

\begin{proposition}\label{contractibleBicomplexes}
Suppose $(A^{\bullet\bullet}, \d,\d')$ is a bicomplex with contractible columns.
\begin{enumerate}
\item If $A^{ij}=0$ for $(i,j)$ in the fourth quadrant, then $\Tot^\oplus(A)\simeq 0$.
\item  Dually, if $A^{ij}=0$ for $(i,j)$ in the second quadrant, then $\Tot^\Pi(A)\simeq 0$.
\end{enumerate}
\end{proposition}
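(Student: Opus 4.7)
The strategy is to assemble the column nulhomotopies into a single operator $h$ on $A^{\bullet\bullet}$ and to build from it a nulhomotopy of $\Id$ on the total complex, in the style of the homological perturbation lemma; the support hypotheses are precisely what enforce convergence of the perturbation series in the appropriate completion.

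To begin, I would choose column nulhomotopies $h^i \colon A^{i,\bullet} \to A^{i, \bullet - 1}$ satisfying $\d' h^i + h^i \d' = \Id$ and replace each by $h^i \circ \d' \circ h^i$ so that $(h^i)^2 = 0$, which is possible by Lemma~\ref{contractibleComplexesLemma}(3). Let $h$ be the resulting bihomogeneous endomorphism of $A^{\bullet\bullet}$ of bidegree $(0, -1)$; it satisfies $h^2 = 0$ and $\d' h + h \d' = \Id$. I would then propose the explicit nulhomotopy
\[
H \;:=\; \sum_{n \geq 0} (-1)^n (h\d)^n h,
\]
whose $n$-th term has bidegree $(n, -n-1)$, and check two things: that the series defines an honest map in the correct completion, and that $DH + HD = \Id$ where $D = \d + \d'$.

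For convergence in part (1), the $n$-th term sends $A^{i,j}$ into $A^{i+n, j-n-1}$, which lies in the excluded fourth quadrant (and hence equals $0$) for $n > \max(-i, j-1)$; so the restriction of $H$ to each $A^{i,j}$ is a finite sum, and extending linearly yields a degree $-1$ endomorphism of $\Tot^\oplus(A)$. For part (2) the dual check is needed: for each target bidegree $(a,b)$ the source bidegrees $(a-n, b+n+1)$ for $n \geq 0$ lie on a single diagonal and exit the support (which avoids the second quadrant) for $n > \max(a, -b-1)$, so only finitely many source positions contribute to each target — exactly the finiteness condition required for $H$ to define a map on $\Tot^\Pi(A)$. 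The identity $DH + HD = \Id$ is then a formal consequence of $\d^2 = (\d')^2 = 0$, $\d \d' + \d' \d = 0$, $h^2 = 0$, and $\d' h + h \d' = \Id$: the $\d$-contributions sum to $\sum_{n \geq 0}(-1)^n[(\d h)^{n+1} + (h\d)^{n+1}]$, the $\d'$-contributions sum to $\Id + \sum_{n \geq 1}(-1)^n[(\d h)^n + (h\d)^n]$, and the two series cancel termwise, leaving $\Id$. Equivalently, this is the output of the basic perturbation lemma applied to the trivial strong deformation retract $0 \rightleftarrows (A^{\bullet\bullet}, \d')$ with side data $h$ and perturbation $\d$.

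The main obstacle is not the formal telescoping computation, which goes through by straightforward sign bookkeeping, but rather matching the support condition to the correct completion: the exclusion of quadrant IV is exactly what permits summing over $n$ within each $A^{i,j}$ (good for $\oplus$), whereas the exclusion of quadrant II is exactly what bounds the number of source terms hitting each $A^{a,b}$ (good for $\Pi$). I might alternatively deduce part (2) from part (1) by applying the contravariant duality functor of Definition~\ref{dualityFunctor}, which reverses both gradings, swaps direct sums with direct products, and sends the exclusion of quadrant IV to that of quadrant II.
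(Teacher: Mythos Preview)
Your argument is correct and uses the same core idea as the paper: assemble the column nulhomotopies into a single $h$ and form the perturbation series $H = \sum_{n\geq 0}(-1)^n (h\d)^n h$ as a nulhomotopy of the total complex. The formal telescoping you outline is exactly right (and in fact does not even require $h^2=0$, though assuming it is harmless).

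The only real difference from the paper's proof is organizational. The paper first treats the half-plane cases (left half-plane for $\Tot^\oplus$, right half-plane for $\Tot^\Pi$), where convergence of the series is trivially column-by-column, and then handles the general quadrant-excluded case by splitting $A$ into a left-half-plane piece $B$ and a quadrant-I piece $C$, observing that $\Tot^\oplus(C) \cong \Tot^\Pi(C)$, and contracting the resulting mapping cone via Proposition~\ref{Gauss}. You instead attack the general case directly, with the sharper observation that the $n$-th term lands in quadrant~IV once $n > \max(-i, j-1)$ (respectively, for $\Tot^\Pi$, that only finitely many source bidegrees hit a given target). This is cleaner and avoids the cone-splicing step; the paper's staged approach has the minor advantage that the convergence in each sub-case is entirely obvious. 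Your alternative suggestion of deducing~(2) from~(1) via the duality functor $(\ )^\vee$ is also legitimate and is in the spirit of how the paper treats such dual statements elsewhere.
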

\begin{proof}
We first prove (1) when $A^{ij}$ is supported in the left half-plane $i\leq 0$ and (2) when $A^{ij}$ when $A^{ij}$ is supported in the right half-plane $i\geq 0$.  The general cases will involve patching these two cases together.

\textbf{Step 1.}  Suppose $A^{ij}=0$ for $i>0$.  Put $C_i = (A^{i,\bullet}, \d)$, so that $C_i =0$ for $i>0$.  By hypotheses we have $h:C_i\rightarrow C_i$ of degree $-1$ such that $\Id_{C_i} = \d\circ h+h\circ \d$.  Define $H:\Tot^\oplus(A)\rightarrow \Tot^\oplus$ by
\[
H = \sum_{m \geq 0} (-1)^m h \circ (\d'\circ h)^m
\]
This map is defined since for fixed $i$, the restriction $h \circ (\d'\circ h)^m|_{C_i}$ vanishes for $m>j$.  Hence $H|_{C_i} = \sum_{0\leq m\leq i} ((-1)^m h \circ (\d'\circ h)^m)|_{C_i}$ is defined.  $H$ is now defined by the universal property of direct sums.

One can check formally that $(\d+\d')\circ H+H\circ (\d+\d') = \Id_{\Tot^\oplus(A)}$.  We leave this to the reader.  That is to say, $\Tot^\oplus(A)\simeq 0$ in this case.

\textbf{Step 2.}  Suppose $A^{ij}=0$ for $i<0$.  A slight modification of the previous argument implies that $\Tot^\Pi(A)\simeq 0$ in this case.

\textbf{Step 3.}  Suppose $A^{ij}=0$ for $i>0,j<0$.  Let $B$ and $C$ be the restricted bicomplexes:
\[
B^{ij} =
\begin{cases}
A^{ij} & \text{ for $i\leq 0$}\\ 
0 & \text{ otherwise}
\end{cases}
\hskip.5 in
C^{ij}=
\begin{cases}
A^{ij} & \text{ for $i> 0$}\\ 
0 & \text{ otherwise}
\end{cases}.
\]
Then $B$ and $C$ are bicomplexes with contractible columns.  $B$ is concentrated in the left half-plane, so previous arguments imply $\Tot^\oplus(B)\simeq 0$.  $C$ is concentrated in the quadrant I, so $\Tot^\oplus(C)\cong \Tot^\Pi(C)$, which is contractible by previous arguments.  Further, $\Tot^\oplus(A)$ is a mapping cone $\Tot^\oplus(A)\cong (\Tot^\oplus(B)\rightarrow \Tot^\oplus(C))$, which is contractible by two applications of proposition \ref{Gauss}.

\textbf{Step 4.}  Suppose $A^{ij}=0$ for $i>0,j<0$.  A similar argument to above implies that $\Tot^\Pi(A)\simeq 0$.  This completes the proof.
\end{proof}


\begin{thebibliography}{10}

\bibitem{BK01} B.~Bakalov and A.~A.~Kirillov, {\em Lectures on Tensor Categories and Modular Functors}, University Lecture Series 21, American Mathematical Society, 2001.



\bibitem{B-N05} D. Bar-Natan, {\em Khovanov's homology for tangles and cobordisms}, Geom. Topol. 9 2005, 1443--1499.

\bibitem{BFK99} J.~Bernstein, I.~Frenkel, M.~Khovanov, {\em A categorification of the Temperley-Lieb algebra and Schur quotients of U(sl(2)) via projective and Zuckerman functors}, Selecta Mathematica, New ser. 5 (1999) 199--241

\bibitem{CF94} L.~Crane, I.~B.~Frenkel {\em Four dimensional topological quantum field theory, Hopf categories, and the canonical bases},  J.Math.Phys. 35 (1994) 5136-5154 (DOI: 10.1063/1.530746).

\bibitem{CK12a} B.~Cooper and V.~Krushkal, {\em Categorification of the Jones-Wenzl Projectors}, Quantum Topol. 3 (2012), 139-180.

\bibitem{CK12b} B. Cooper and V. Krushkal, {\em Handle slides and Localizations of Categories}, Int. Math. Res.~Not.~2012 (DOI: 10.1093/imrn/rns109).


\bibitem{CHK11} B.~Cooper, M.~Hogancamp and V.~Krushkal, {\em $\mathrm{SO}(3)$ Homology of Graphs and Links}, Algebr. Geom. Topol. 11, no. 4, 2011, 1861--2235

\bibitem{CH12} B.~Cooper, M.~Hogancamp, {\em An exceptional collection for Khovanov homology}, arXiv:1209.1002v1.

\bibitem{El10} B.~Elias, {\em A diagrammatic Temperley-Lieb categorification}, arXiv:1003.3416.




\bibitem{FSS12} I.~Frenkel, C.~Stroppel, J.~Sussan, {\em Categorifying fractional Euler characteristics, Jones-Wenzl projector and $3j$-symbols}, Quantum Topology Volume 3, Issue 2, 2012, pp. 181--253


\bibitem{GOR12} E.~Gorsky, A.~Oblomkov, J.~Rasmussen, {\em On stable Khovanov homology of torus knots}, arXiv:1206.2226.



\bibitem{H12} M.~Hogancamp {\em On functoriality and categorified colored Jones} (in preparation).

\bibitem{KL94} L.H.~Kauffman and S.L.~Lins,
{\em Temperley-Lieb recoupling theory and invariants of $3$-manifolds},
Princeton University Press, Princeton, NJ, 1994.


\bibitem{Kh00} M.~Khovanov, {\em A categorificaiton of the Jones polynomial}, Duke Math. J. 101 (2000), no. 3, 359--426.

\bibitem{Kh02}   M.~Khovanov, {\em A functor-valued invariant of tangles}, Algebr. Geom. Topol. 2 2002, 665--741.

\bibitem{Kh04} M.~Khovanov, {\em $\sl_3$ link homology}, Algebr. Geom. Topol. 4 (2004) 1045-1081


\bibitem{KR08} M.~Khovanov, L.Rozansky {\em Matrix factorizations and link homology},  Fundamenta Mathematicae, vol.199 (2008), 1-91.

\bibitem{MSV09} M.~Mackaay, M.~Sto\v{s}i\'{c}, P.~Vaz {\em sl(N)-link homology ($N\geq 4$) using foams and the Kapustin-Li formula}, 
Geom.~Topol.~13 (2009) 1075-1128 (DOI: 10.2140/gt.2009.13.1075).

\bibitem{MN08} S.~Morrison, A.~Nieh, {\em On Khovanov's cobordism theory for su(3) knot homology}, J. Knot Theory Ramifications 17 (2008), no. 9, 1121--1173.


\bibitem{Ra10} J.~Rasmussen, {\em Khovanov homology and the slice genus}, Invent.~Math.~182 (2010), no.~2, 419-447.

\bibitem{Roz10a} L.~Rozansky, {\em An infinite torus braid yields a categorified Jones-Wenzl projector}, arXiv:1005.3266.



\bibitem{SS12} C.~Stroppel, J.~Sussan, {\em Categorified Jones-Wenzl Projectors: a comparison}, arXiv:1105.3038.

\bibitem{St05} C.~Stroppel, {\em Categorification of the Temperley-Lieb category, tangles, and cobordisms via projective functors}, Duke Math. J. Volume 126, Number 3 (2005), 547--596.










\end{thebibliography}
\end{document}